\documentclass[10pt]{amsart}

\setcounter{secnumdepth}{3}

\usepackage{epsf,latexsym,amsmath,amssymb,amscd,datetime}
\usepackage{amsmath,amsthm,amssymb,enumerate,eucal,url,calligra,mathrsfs}

\usepackage{graphicx}
\usepackage{color}
%


%
%



\newcommand{\ourv}[1]{{\bf #1}}

\DeclareMathOperator{\Sky}{Sky}
\DeclareMathOperator{\CoSky}{CoSky}



\newcommand{\red}{\color[rgb]{1.0,0.2,0.2}} 

\DeclareMathOperator{\SHom}{\mathscr{H}\text{\kern -3pt {\calligra\large om}}\,}



\DeclareMathOperator{\Ob}{Ob}
\DeclareMathOperator{\Fl}{Fl}

 

\IfFileExists{my_xrefs}{\input my_xrefs}{}


 















\usepackage{mathrsfs}
\usepackage{amssymb}
\usepackage{dsfont}
\usepackage{verbatim}
\usepackage{url}








\theoremstyle{plain}
\newtheorem{theorem}{Theorem}[section]
\newtheorem{lemma}[theorem]{Lemma}

\newtheorem{corollary}[theorem]{Corollary}


\theoremstyle{definition}
\newtheorem{definition}[theorem]{Definition}

\newtheorem{xca}{Exercise}[section]


%

\newtheorem{example}[theorem]{Example}

%
%

\newtheorem{remark}[theorem]{Remark}





%
%


\newcommand{\isom}{\simeq} 

\newcommand{\ignore}[1]{}

\newcommand{\Hom}{{\rm Hom}}



%

\newcommand{\integers}{{\mathbb Z}}

\newcommand{\complex}{{\mathbb C}}





%




\DeclareMathAlphabet{\mathcal}{OMS}{cmsy}{m}{n}

\newcommand\cA{\mathcal{A}}
\newcommand\cB{\mathcal{B}}
\newcommand\cC{\mathcal{C}}
\newcommand\cD{\mathcal{D}}

\newcommand\cF{\mathcal{F}}
\newcommand\cG{\mathcal{G}}
\newcommand\cH{\mathcal{H}}

\newcommand\cL{\mathcal{L}}
\newcommand\cM{\mathcal{M}}

\newcommand\cO{\mathcal{O}}
\newcommand\cP{\mathcal{P}}

\newcommand\cS{\mathcal{S}}

\newcommand\cY{\mathcal{Y}}










\def\from{\colon}

\def\isom{\simeq}

\def\tensor{\otimes}
\def\eqdef{\overset{\text{def}}{=}}


\DeclareMathOperator{\Ext}{Ext}

\def\Hom{\qopname\relax o{Hom}}
\DeclareMathOperator{\id}{id}

\def\Image{\qopname\relax o{Im}}




\DeclareRobustCommand
  \rddots{\mathinner{\mkern1mu\raise\p@
    \vbox{\kern7\p@\hbox{.}}\mkern2mu
    \raise4\p@\hbox{.}\mkern2mu\raise7\p@\hbox{.}\mkern1mu}}

\newcommand\xhookrightarrow[2][]{\ext@arrow 0062{\hookrightarrowfill@}{#1}{#2}}
\def\hookrightarrowfill@{\arrowfill@\lhook\relbar\rightarrow}



%










\usepackage{relsize}
\usepackage{tikz}
\usetikzlibrary{matrix,arrows,decorations.pathmorphing}
\usepackage{tikz-cd}
\usetikzlibrary{cd}

\usepackage[pdftex,colorlinks,linkcolor=blue]{hyperref}

%

\tolerance=10000

\newcommand{\CTwoV}{{\cC_{\rm 2V}}}
\newcommand{\XTwoV}{X_{\rm 2V}}

\tolerance=200

\begin{document}

\title[Sheaves and Duality in Graph Riemann-Roch]
{Sheaves and Duality in the Two-Vertex Graph Riemann-Roch Theorem}

\author{Nicolas Folinsbee}
\address{Department of Mathematics, University of British Columbia,
        Vancouver, BC\ \ V6T 1Z2, CANADA. }
\curraddr{}
\email{{\tt nicolasfolinsbee@gmail.com}} 
\thanks{Research supported in part by an NSERC grant.}

\author{Joel Friedman}
\address{Department of Computer Science, 
        University of British Columbia, Vancouver, BC\ \ V6T 1Z4, CANADA. }
\curraddr{}
\email{{\tt jf@cs.ubc.ca}}
\thanks{Research supported in part by an NSERC grant.}

\date{July 26, 2022} 

\subjclass[2010]{Primary: 05C38, 14H55.  Secondary: 55N30}

\keywords{}

\begin{abstract}
For each graph on two vertices, and each divisor on the graph in the sense of
Baker-Norine, we describe a sheaf of vector spaces on a finite category whose
zeroth Betti number is the Baker-Norine ``Graph Riemann-Roch'' rank of the
divisor plus one.  We prove duality theorems that generalize the Baker-Norine
``Graph Riemann-Roch'' Theorem.
\end{abstract}

\maketitle
\setcounter{tocdepth}{3}
\tableofcontents






\newcommand{\ProjRes}{

\begin{tikzpicture}

\node[] at (0,3){ deg  2};
\node[] at (2.5,3){ deg 1};
\node[] at (5,3){ deg 0 };
\node[] at (7.5,3){ deg -1};

\node at (0,1)(1){\LARGE 0};

\node at (2.5,1.5)(21){$\mathcal{F} (A_1)$};
\node at (2.5,1)(22){$ \bigoplus$};
\node at (2.5,.5)(23){$\mathcal{F} (A_2)$};

\node at (5,2)(31){$\mathcal{F} (B_1)$};
\node at (5,1.5)(32){$ \bigoplus$};
\node at (5,1)(33){$\mathcal{F} (B_2)$};
\node at (5,.5)(34){$ \bigoplus$};
\node at (5,0)(35){$\mathcal{F} (B_3)$};

\node at (7.5,1)(4){\LARGE 0};

\node[] at (0,-3){ deg  2};
\node[] at (2.5,-3){ deg 1};
\node[] at (5,-3){deg 0 };
\node[] at (7.5,-3){ deg -1};

\node at (0,-5)(12){\LARGE 0};

\node at (2.5,-4)(312){$\mathcal{F} (B_1)$};
\node at (2.5,-4.5)(322){$ \bigoplus$};
\node at (2.5,-5)(332){$\mathcal{F} (B_2)$};
\node at (2.5,-5.5)(342){$ \bigoplus$};
\node at (2.5,-6)(352){$\mathcal{F} (B_3)$};

\node at (5,-4.5)(212){$\mathcal{F} (A_1)$};
\node at (5,-5)(222){$ \bigoplus$};
\node at (5,-5.5)(232){$\mathcal{F} (A_2)$};

\node at (7.5,-5)(42){\LARGE 0};

\draw[->,line width=.3mm, shorten <=.5cm] (22) -- (1);

\draw[->,line width=.3mm,shorten <=-.1cm, shorten >=-.15cm] (31) -- (21);
\draw[->,line width=.3mm, shorten >=-.15cm] (33) -- (23);
\draw[->,line width=.3mm,shorten <=-.1cm, shorten >=-.15cm] (35) -- (21);
\draw[->,line width=.3mm, shorten <=-.15cm, shorten >=-.15cm] (35) -- (23);

\draw[->,line width=.3mm, shorten >= .3cm] (4) -- (33);

\draw[->,line width=.3mm,shorten >= .3cm] (12) -- (332);

\draw[->,line width=.3mm,shorten <=-.1cm, shorten >=-.15cm] (312) -- (212);
\draw[->,line width=.3mm, shorten >=-.15cm] (332) -- (232);
\draw[->,line width=.3mm] (352) -- (212);
\draw[->,line width=.3mm,shorten <=-.1cm, shorten >=-.15cm] (352) -- (232);

\draw[->,line width=.3mm, shorten <=.5cm] (222) -- (42);

 \draw [black, thick] (2.5,1) ellipse (8mm and 12mm); 
 \draw [black, thick] (5,1) ellipse (9mm and 16mm); 
 
  \draw [black, thick] (5,-5) ellipse (8mm and 12mm); 
 \draw [black, thick] (2.5,-5) ellipse (9mm and 16mm);

\draw[shorten >=1.1cm,shorten <=1cm,->, line width=.3mm, red] (33) -- (332);
\draw[shorten >=0.8cm,shorten <=.7cm,->, line width=.3mm, red] (22) -- (222);

\end{tikzpicture}
 }

\newcommand{\SkyRes}{

\begin{tikzpicture} [dot/.style={circle,inner sep=1.5pt,fill}]

\node at (1.5,2)(v11){\footnotesize$yk[y]$};
\node at (1.5,1)(v12){$0$};
\node at (1.5,0)(v13){$0$};
\node at (-.5,1.5)(v14){\footnotesize$k[x_1, x_1^{-1}]$};
\node at (-.5,0.5)(v15){$0$};

\node at (6,2)(v21){\footnotesize$k[y_1]$};
\node at (6,1)(v22){$0$};
\node at (6,0)(v23){$0$};
\node at (4,1.5)(v24){\footnotesize $k[x_1, x_1^{-1}]$};
\node at (4,0.5)(v25){$0$};

\node at (10.5,2)(v31){$\tilde{k}$};
\node at (10.5,1)(v32){$0$};
\node at (10.5,0)(v33){$0$};
\node at (9,1.5)(v34){$0$};
\node at (9,0.5)(v35){$0$};

\node[] at (12.5,1)(v4){\LARGE 0};

\draw[->,line width=.3mm] (v11) -- (v14);
\draw[->,line width=.3mm] (v12) -- (v15);
\draw[->,line width=.3mm] (v13) -- (v14);
\draw[->,line width=.3mm] (v13) -- (v15);

\draw[->,line width=.3mm] (v21) -- (v24);
\draw[->,line width=.3mm] (v22) -- (v25);
\draw[->,line width=.3mm] (v23) -- (v24);
\draw[->,line width=.3mm] (v23) -- (v25);

\draw[->,line width=.3mm] (v31) -- (v34);
\draw[->,line width=.3mm] (v32) -- (v35);
\draw[->,line width=.3mm] (v33) -- (v34);
\draw[->,line width=.3mm] (v33) -- (v35);

\draw [->,red] (v11) to [bend right=-5](v21);
\draw [->,red] (v12) to [bend right=-3]  (v22);
\draw [->,red] (v13) to [bend left=-5](v23);
\draw [->,red] (v14) to [bend right=-5] (v24);
\draw [->,red] (v15) to [bend right=-5](v25);

\draw [->,red] (v21) to [bend right=-5](v31);
\draw [->,red] (v22) to [bend right=-3]  (v32);
\draw [->,red] (v23) to [bend left=-5](v33);
\draw [->,red] (v24) to [bend right=-5] (v34);
\draw [->,red] (v25) to [bend right=-5](v35);

\draw[->,line width=.4mm, red] (v32) -- (v4);

\end{tikzpicture}
}

\newcommand{\ProjResTwo}{

\begin{tikzpicture} [dot/.style={circle,inner sep=1.5pt,fill}]

\node at (1.5,2)(v11){$0$};
\node at (1.5,1)(v12){$0$};
\node at (1.5,0)(v13){$0$};
\node at (0,1.5)(v14){$\tilde{S_1}$};
\node at (0,0.5)(v15){$\tilde{S_2}$};

\node at (6,2)(v21){$R_1$};
\node at (6,1)(v22){$R_2$};
\node at (6,0)(v23){$R_3$};
\node at (4,1.5)(v24){$S_1 \oplus S_1$};
\node at (4,0.5)(v25){$S_2 \oplus S_2$};

\node at (10.5,2)(v31){$R_1$};
\node at (10.5,1)(v32){$R_2$};
\node at (10.5,0)(v33){$R_3$};
\node at (9,1.5)(v34){$S_1$};
\node at (9,0.5)(v35){$S_2$};

\node[] at (12.5,1)(v4){\LARGE 0};

\draw[->,line width=.3mm] (v11) -- (v14);
\draw[->,line width=.3mm] (v12) -- (v15);
\draw[->,line width=.3mm] (v13) -- (v14);
\draw[->,line width=.3mm] (v13) -- (v15);

\draw[->,line width=.3mm] (v21) -- (v24);
\draw[->,line width=.3mm] (v22) -- (v25);
\draw[->,line width=.3mm] (v23) -- (v24);
\draw[->,line width=.3mm] (v23) -- (v25);

\draw[->,line width=.3mm] (v31) -- (v34);
\draw[->,line width=.3mm] (v32) -- (v35);
\draw[->,line width=.3mm] (v33) -- (v34);
\draw[->,line width=.3mm] (v33) -- (v35);

\draw [->,red] (v11) to [bend right=-5](v21);
\draw [->,red] (v12) to [bend right=-3]  (v22);
\draw [->,red] (v13) to [bend left=-5](v23);
\draw [->,red] (v14) to [bend right=-5] (v24);
\draw [->,red] (v15) to [bend right=-5](v25);

\draw [->,red] (v21) to [bend right=-5](v31);
\draw [->,red] (v22) to [bend right=-3]  (v32);
\draw [->,red] (v23) to [bend left=-5](v33);
\draw [->,red] (v24) to [bend right=-5] (v34);
\draw [->,red] (v25) to [bend right=-5](v35);

\draw[->,line width=.4mm, red] (v32) -- (v4);

\end{tikzpicture}
}

\newcommand{\CV}{

\begin{tikzpicture}
\node at (0,1.5)(a1){$A_1$};
\node at (0,0)(a2){$A_2$};

\node at (3,1.5)(b1){$B_1$};
\node at (3,0)(b2){$B_2$};
\node at (3,-1.5)(b3){$B_3$};

\draw[->] (a1) -- (b1);

\draw[->] (a2) -- (b2);

\draw[->] (a1) -- (b3);
\draw[->] (a2) -- (b3);

\end{tikzpicture}
 }

\newcommand{\Md}{

\begin{tikzpicture}

\node at (0,1.5)(a1){\footnotesize$k[x_1, x_1^{-1}]$};
\node at (0,0)(a2){\footnotesize$k[x_2, x_2^{-1}]$};

\node at (3,1.5)(b1){\footnotesize$k[y_1]$};
\node at (3,0)(b2){\footnotesize$k[y_2]$};
\node at (3,-1.5)(b3){\footnotesize$(k[v, v^{-1}])^r$};

\draw[->] (b1) to node[above]{$f_1$} (a1);
\draw[->] (b2) to node[above, near start]{$f_2$} (a2);
\draw[->] (b3) to node[below, near start]{$g_1$} (a1);
\draw[->] (b3) to node[below]{$g_2$} (a2);
\end{tikzpicture}
 }

\newcommand{\Or}{
\begin{tikzpicture}

\node at (0,1.5)(a1){\footnotesize$k[x_1, x_1^{-1}]$};
\node at (0,0)(a2){\footnotesize$k[x_2, x_2^{-1}]$};

\node at (3,1.5)(b1){\footnotesize$k[y_1]$};
\node at (3,0)(b2){\footnotesize$k[y_2]$};
\node at (3,-1.5)(b3){\footnotesize$(k[v, v^{-1}])$};

\draw[->] (b1) to node[above]{\scriptsize $f_1$} (a1);
\draw[->] (b2) to node[above, near start]{\scriptsize $f_2$} (a2);
\draw[->] (b3) to node[below, near start]{\scriptsize$g_1$} (a1);
\draw[->] (b3) to node[below]{\scriptsize$g_2$} (a2);
\end{tikzpicture}
}

\newcommand{\inclusions}{

\begin{tikzpicture} [dot/.style={circle,inner sep=1.5pt,fill}]

\node at (0,3){\footnotesize$0 \ points$};
\node at (2.5,3){\footnotesize$1 \ point$};
\node at (5,3){\footnotesize$2 \ points$};
\node at (7.5,3){\footnotesize$3 \ points$};
\node at (10,3){\footnotesize$4 \ points$};
\node at (12.5,3){\footnotesize$5 \ points$};

\node at (0,0)(empty){\footnotesize$\emptyset$};

\node[red] at (2.5,1)(a1){\footnotesize$U_{A_1}$};
\node[red] at (2.5,-1)(a2){\footnotesize$U_{A_2}$};

\node[red] at (5,2)(b1){\footnotesize$U_{B_1}$};
\node at (5,0)(a1a2){\scriptsize$U_{A_1} \cup U_{A_2}$};
\node[red] at (5,-2)(b2){\footnotesize$U_{B_2}$};

\node at (7.5,2)(a2b1){\scriptsize$U_{A_2} \cup U_{B_1}$};
\node[red] at (7.5,0)(b3){\footnotesize$U_{B_3}$};
\node at (7.5,-2)(a1b2){\scriptsize$U_{A_1} \cup U_{B_2}$};

\node at (10,2)(b1b3){\scriptsize$U_{B_1} \cup U_{B_3}$};
\node at (10,0)(b1b2){\scriptsize$U_{B_1} \cup U_{B_2}$};
\node at (10,-2)(b2b3){\scriptsize$U_{B_2} \cup U_{B_3}$};

\node at (12.5,0)(x){\footnotesize$X$};

\draw[->] (empty) -- (a1);
\draw[->] (empty) -- (a2);

\draw[->,red] (a1) -- (b1);
\draw[->] (a1) -- (a1a2);
\draw[->] (a2) -- (a1a2);
\draw[->,red] (a2) -- (b2);

\draw[->] (b1) -- (a2b1);
\draw[->] (a1a2) -- (a2b1);
\draw[->] (a1a2) -- (b3);
\draw[->] (a1a2) -- (a1b2);
\draw[->] (b2) -- (a1b2);

\draw[->] (a2b1) -- (b1b3);
\draw[->] (a2b1) -- (b1b2);
\draw[->] (b3) -- (b1b3);
\draw[->] (b3) -- (b2b3);
\draw[->] (a1b2) -- (b2b3);
\draw[->] (a1b2) -- (b1b2);

\draw[->] (b1b3) -- (x);
\draw[->] (b1b2) -- (x);
\draw[->] (b2b3) -- (x);

\draw[->,red] (a1) -- (b3);
\draw[->,red] (a2) -- (b3);
\end{tikzpicture}
}

\newcommand{\EulerShortExact}{

\begin{tikzpicture} [dot/.style={circle,inner sep=1.5pt,fill}]

\node at (2.5,3)(v11){\footnotesize$k[y_1]$};
\node at (2.5,1.5)(v12){\scriptsize$k[v,v^{-1}]^{\oplus r}$};
\node at (2.5,0)(v13){\footnotesize$k[y_2]$};
\node at (0,2.25)(v14){\scriptsize$k[x_1, x_1^{-1}]$};
\node at (0,.75)(v15){\scriptsize$k[x_2, x_2^{-1}]$};

\node at (8,3)(v21){\footnotesize$k[y_1]$};
\node at (8,1.5)(v22){\scriptsize$k[v,v^{-1}]^{\oplus r}$};
\node at (8,0)(v23){\footnotesize$k[y_2]$};
\node at (5,2.25)(v24){\scriptsize$k[x_1, x_1^{-1}]$};
\node at (5,.75)(v25){\scriptsize$k[x_2, x_2^{-1}]$};

\node at (12.5,3)(v31){$k$};
\node at (12.5,1.5)(v32){$0$};
\node at (12.5,0)(v33){$0$};
\node at (11,2.25)(v34){$0$};
\node at (11,.75)(v35){$0$};

\draw[->,line width=.3mm] (v11) --  node[midway,above, black]{\tiny{ $\rho_{1, d_1}$ }}(v14);
\draw[->,line width=.3mm] (v12) -- (v14);
\draw[->,line width=.3mm] (v12) -- (v15);
\draw[->,line width=.3mm] (v13) --  node[midway,below, black]{\tiny{ $\rho_{2, d_2}$ }}(v15);

\draw[->,line width=.3mm] (v21) --node[above, black, near end]{\tiny{$\rho_{1, d_1 + 1}$}} (v24);
\draw[->,line width=.3mm] (v22) -- (v24);
\draw[->,line width=.3mm] (v22) -- (v25);
\draw[->,line width=.3mm] (v23) -- node[near end,below, black]{\tiny{ $\rho_{2, d_2}$ }}(v25);

\draw[->,line width=.3mm] (v31) -- (v34);
\draw[->,line width=.3mm] (v32) -- (v34);
\draw[->,line width=.3mm] (v32) -- (v35);
\draw[->,line width=.3mm] (v33) -- (v35);

\draw [->,blue] (v11) to node[midway,above, black]{\scriptsize{ $p(y_1) \mapsto y_1 p(y_1) $ }} (v21);
\draw [->,red] (v12) to  (v22);
\draw [->,red] (v13) to (v23);
\draw [->,red] (v14) to (v24);
\draw [->,red] (v15) to (v25);

\draw [->,blue] (v21) to (v31);
\draw [->,red] (v22) to  (v32);
\draw [->,red] (v23) to (v33);
\draw [->,red] (v24) to (v34);
\draw [->,red] (v25) to (v35);

\end{tikzpicture}


}

\newcommand{\CTwoVTop}{
\begin{tikzpicture}
\node at (0,2){$A_1$};
\node at (0,0){$A_2$};

\node at (3,3){$B_1$};
\node at (3,1){$B_3$};
\node at (3,-1){$B_2$};

\draw[black!30!green] (0,0) circle (10pt);
\draw[black!30!green] (0,2) circle (10pt);

\draw [red] plot [smooth cycle] coordinates {(3.5,1) (-.3,2.5) (-.3,-.5)};

\draw [red] plot [smooth cycle] coordinates {(3.5,3.5) (-.5,2.5) (-.5,1.5) (3.5,2.5)};

\draw [red] plot [smooth cycle] coordinates {(3.5,-.5) (-.5,.5) (-.5,-.5) (3.5,-1.5)};

\end{tikzpicture}
}

\newcommand{\CoSkyBOne}{

\begin{tikzpicture}
\node at (1.5,2)(v11){\footnotesize$M_1$};
\node at (1.5,1)(v12){$0$};
\node at (1.5,0)(v13){$0$};
\node at (-.5,1.5)(v14){\footnotesize$N_1$};
\node at (-.5,0.5)(v15){$0$};

\draw[->] (v11) -- (v14);
\draw[->] (v12) -- (v14);
\draw[->] (v12) -- (v15);
\draw[->] (v13) -- (v15);

\end{tikzpicture}

}

\newcommand{\CoSkyBThree}{

\begin{tikzpicture}
\node at (1.5,2)(v11){$0$};
\node at (1.5,1)(v12){\footnotesize$M_3$};
\node at (1.5,0)(v13){$0$};
\node at (-.5,1.5)(v14){\footnotesize$N_1$};
\node at (-.5,0.5)(v15){\footnotesize$N_2$};

\draw[->] (v11) -- (v14);
\draw[->] (v12) -- (v14);
\draw[->] (v12) -- (v15);
\draw[->] (v13) -- (v15);

\end{tikzpicture}

}

\newcommand{\CoSkyBTwo}{

\begin{tikzpicture}
\node at (1.5,2)(v11){$0$};
\node at (1.5,1)(v12){$0$};
\node at (1.5,0)(v13){\footnotesize$M_2$};
\node at (-.5,1.5)(v14){$0$};
\node at (-.5,0.5)(v15){\footnotesize$M_2$};

\draw[->] (v11) -- (v14);
\draw[->] (v12) -- (v14);
\draw[->] (v12) -- (v15);
\draw[->] (v13) -- (v15);

\end{tikzpicture}

}

\newcommand{\CoSkyAOne}{

\begin{tikzpicture}
\node at (1.5,2)(v11){$0$};
\node at (1.5,1)(v12){$0$};
\node at (1.5,0)(v13){$0$};
\node at (-.5,1.5)(v14){\footnotesize$N_1$};
\node at (-.5,0.5)(v15){$0$};

\draw[->] (v11) -- (v14);
\draw[->] (v12) -- (v14);
\draw[->] (v12) -- (v15);
\draw[->] (v13) -- (v15);

\end{tikzpicture}

}

\newcommand{\CoSkyATwo}{

\begin{tikzpicture}
\node at (1.5,2)(v11){$0$};
\node at (1.5,1)(v12){$0$};
\node at (1.5,0)(v13){$0$};
\node at (-.5,1.5)(v14){$0$};
\node at (-.5,0.5)(v15){\footnotesize$N_2$};

\draw[->] (v11) -- (v14);
\draw[->] (v12) -- (v14);
\draw[->] (v12) -- (v15);
\draw[->] (v13) -- (v15);

\end{tikzpicture}

}
\newcommand{\SkyBOne}{

\begin{tikzpicture}
\node at (1.5,2)(v11){\footnotesize$M_1$};
\node at (1.5,1)(v12){$0$};
\node at (1.5,0)(v13){$0$};
\node at (-.5,1.5)(v14){$0$};
\node at (-.5,0.5)(v15){$0$};

\draw[->] (v11) -- (v14);
\draw[->] (v12) -- (v14);
\draw[->] (v12) -- (v15);
\draw[->] (v13) -- (v15);

\end{tikzpicture}

}

\newcommand{\SkyBThree}{

\begin{tikzpicture}
\node at (1.5,2)(v11){$0$};
\node at (1.5,1)(v12){\footnotesize$M_3$};
\node at (1.5,0)(v13){$0$};
\node at (-.5,1.5)(v14){$0$};
\node at (-.5,0.5)(v15){$0$};

\draw[->] (v11) -- (v14);
\draw[->] (v12) -- (v14);
\draw[->] (v12) -- (v15);
\draw[->] (v13) -- (v15);

\end{tikzpicture}

}

\newcommand{\SkyBTwo}{

\begin{tikzpicture}
\node at (1.5,2)(v11){$0$};
\node at (1.5,1)(v12){$0$};
\node at (1.5,0)(v13){\footnotesize$M_2$};
\node at (-.5,1.5)(v14){$0$};
\node at (-.5,0.5)(v15){$0$};

\draw[->] (v11) -- (v14);
\draw[->] (v12) -- (v14);
\draw[->] (v12) -- (v15);
\draw[->] (v13) -- (v15);

\end{tikzpicture}

}

\newcommand{\SkyAOne}{

\begin{tikzpicture}
\node at (1.5,2)(v11){\footnotesize$N_2$};
\node at (1.5,1)(v12){\footnotesize$N_2$};
\node at (1.5,0)(v13){$0$};
\node at (-.5,1.5)(v14){\footnotesize$N_2$};
\node at (-.5,0.5)(v15){$0$};

\draw[->] (v11) -- (v14);
\draw[->] (v12) -- (v14);
\draw[->] (v12) -- (v15);
\draw[->] (v13) -- (v15);

\end{tikzpicture}

}

\newcommand{\SkyATwo}{

\begin{tikzpicture}
\node at (1.5,2)(v11){$0$};
\node at (1.5,1)(v12){\footnotesize$N_1$};
\node at (1.5,0)(v13){\footnotesize$N_1$};
\node at (-.5,1.5)(v14){$0$};
\node at (-.5,0.5)(v15){\footnotesize$N_1$};

\draw[->] (v11) -- (v14);
\draw[->] (v12) -- (v14);
\draw[->] (v12) -- (v15);
\draw[->] (v13) -- (v15);

\end{tikzpicture}

}

\newcommand{\Category}{

\begin{tikzpicture}
\node at (1.5,2)(v11){\footnotesize$B_1$};
\node at (1.5,1)(v12){\footnotesize$B_3$};
\node at (1.5,0)(v13){\footnotesize$B_2$};
\node at (-.5,1.5)(v14){\footnotesize$A_1$};
\node at (-.5,0.5)(v15){\footnotesize$A_2$};

\draw[->] (v14) -- (v11);
\draw[->] (v14) -- (v12);
\draw[->] (v15) -- (v12);
\draw[->] (v15) -- (v13);

\end{tikzpicture}

}

\newcommand{\Struct}{

\begin{tikzpicture}
\node at (1.5,2)(v11){\footnotesize$R_1$};
\node at (1.5,1)(v12){\footnotesize$R_3$};
\node at (1.5,0)(v13){\footnotesize$R_2$};
\node at (-.5,1.5)(v14){\footnotesize$S_1$};
\node at (-.5,0.5)(v15){\footnotesize$S_2$};

\draw[->] (v11) -- (v14);
\draw[->] (v12) -- (v14);
\draw[->] (v12) -- (v15);
\draw[->] (v13) -- (v15);

\end{tikzpicture}

}

\newcommand{\SheafModules}{

\begin{tikzpicture}
\node at (1.5,2)(v11){\footnotesize$M_1$};
\node at (1.5,1)(v12){\footnotesize$M_3$};
\node at (1.5,0)(v13){\footnotesize$M_2$};
\node at (-.5,1.5)(v14){\footnotesize$N_1$};
\node at (-.5,0.5)(v15){\footnotesize$N_2$};

\draw[->] (v11) -- (v14);
\draw[->] (v12) -- (v14);
\draw[->] (v12) -- (v15);
\draw[->] (v13) -- (v15);

\end{tikzpicture}

}

\newcommand{\CoSkyHom}{

\begin{tikzpicture} [dot/.style={circle,inner sep=1.5pt,fill}]

\node at (2.5,3)(v11){\footnotesize$0$};
\node at (2.5,1.5)(v12){\footnotesize$M$};
\node at (2.5,0)(v13){\footnotesize$0$};
\node at (0,2.25)(v14){\scriptsize$M \otimes_{R_3} S_1$};
\node at (0,.75)(v15){\scriptsize$M \otimes_{R_3} S_2$};

\node at (8,3)(v21){\scriptsize$\mathcal{F}(B_1)$};
\node at (8,1.5)(v22){\scriptsize$\mathcal{F}(B_3)$};
\node at (8,0)(v23){\scriptsize$\mathcal{F}(B_2)$};
\node at (5,2.25)(v24){\scriptsize$\mathcal{F}(A_1)$};
\node at (5,.75)(v25){\scriptsize$\mathcal{F}(A_2)$};

\draw[->,line width=.3mm] (v11) --  (v14);
\draw[->,line width=.3mm] (v12) -- (v14);
\draw[->,line width=.3mm] (v12) -- (v15);
\draw[->,line width=.3mm] (v13) -- (v15);

\draw[->,line width=.3mm] (v21) -- (v24);
\draw[->,line width=.3mm] (v22) -- (v24);
\draw[->,line width=.3mm] (v22) -- (v25);
\draw[->,line width=.3mm] (v23) -- (v25);

\draw [->,red,dashed] (v11) to  (v21);
\draw [->,red] (v12) to  node[midway,above, black]{\scriptsize{ $\psi$ }} (v22);
\draw [->,red,dashed] (v13) to (v23);
\draw [->,red, dashed] (v14) to (v24);
\draw [->,red, dashed] (v15) to (v25);

\end{tikzpicture}


}

\newcommand{\SkyHom}{

\begin{tikzpicture} [dot/.style={circle,inner sep=1.5pt,fill}]

\node at (2.5,3)(v11){\scriptsize$\mathcal{F}(B_1)$};
\node at (2.5,1.5)(v12){\scriptsize$\mathcal{F}(B_3)$};
\node at (2.5,0)(v13){\scriptsize$\mathcal{F}(B_2)$};
\node at (0,2.25)(v14){\scriptsize$\mathcal{F}(A_1)$};
\node at (0,.75)(v15){\scriptsize$\mathcal{F}(A_2)$};

\node at (8,3)(v21){\footnotesize$M$};
\node at (8,1.5)(v22){\footnotesize$0$};
\node at (8,0)(v23){\footnotesize$0$};
\node at (5,2.25)(v24){\footnotesize$0$};
\node at (5,.75)(v25){\footnotesize$0$};

\draw[->,line width=.3mm] (v11) --  (v14);
\draw[->,line width=.3mm] (v12) -- (v14);
\draw[->,line width=.3mm] (v12) -- (v15);
\draw[->,line width=.3mm] (v13) -- (v15);

\draw[->,line width=.3mm] (v21) -- (v24);
\draw[->,line width=.3mm] (v22) -- (v24);
\draw[->,line width=.3mm] (v22) -- (v25);
\draw[->,line width=.3mm] (v23) -- (v25);

\draw [->,red] (v11) to  (v21);
\draw [->,red, dashed] (v12) to  (v22);
\draw [->,red,dashed] (v13) to (v23);
\draw [->,red, dashed] (v14) to  (v24);
\draw [->,red, dashed] (v15) to (v25);

\end{tikzpicture}


}

\newcommand{\SkyHomTwo}{

\begin{tikzpicture} [dot/.style={circle,inner sep=1.5pt,fill}]

\node at (2.5,3)(v11){\scriptsize$\mathcal{F}(B_1)$};
\node at (2.5,1.5)(v12){\scriptsize$\mathcal{F}(B_3)$};
\node at (2.5,0)(v13){\scriptsize$\mathcal{F}(B_2)$};
\node at (0,2.25)(v14){\scriptsize$\mathcal{F}(A_1)$};
\node at (0,.75)(v15){\scriptsize$\mathcal{F}(A_2)$};

\node at (8,3)(v21){\footnotesize$M$};
\node at (8,1.5)(v22){\footnotesize$M$};
\node at (8,0)(v23){\footnotesize$0$};
\node at (5,2.25)(v24){\footnotesize$M$};
\node at (5,.75)(v25){\footnotesize$0$};

\draw[->,line width=.3mm] (v11) --  (v14);
\draw[->,line width=.3mm] (v12) -- (v14);
\draw[->,line width=.3mm] (v12) -- (v15);
\draw[->,line width=.3mm] (v13) -- (v15);

\draw[->,line width=.3mm] (v21) -- (v24);
\draw[->,line width=.3mm] (v22) -- (v24);
\draw[->,line width=.3mm] (v22) -- (v25);
\draw[->,line width=.3mm] (v23) -- (v25);

\draw [->,red, dashed] (v11) to  (v21);
\draw [->,red, dashed] (v12) to  (v22);
\draw [->,red,dashed] (v13) to (v23);
\draw [->,red] (v14) to  (v24);
\draw [->,red, dashed] (v15) to (v25);

\end{tikzpicture}


}

\section{Introduction}

The main goals of this article are to give an algebraic model of the Graph
Riemann-Roch Theorem of Baker-Norine \cite{baker_norine} in the
special case of a graph with two vertices, and---in our
algebraic model---prove a duality theorem that generalizes the Graph
Riemann-Roch Theorem.  Our algebraic model was formulated by a process of
trial and error and seems a bit ad hoc to us; there may well be 
entirely different ways
to algebraically model the Graph Riemann-Roch Theorem.

Perhaps the main result in this paper is our method of modeling the
Graph Riemann-Roch theorem with sheaves and algebra.
The machinery we use to prove more general duality theorems 
is based on standard tools used for modern, sheaf
theoretic proofs of the classical Riemann-Roch theorem for algebraic curves
and common ideas in topos theory.
However, there is some work needed to formulate and verify
the duality theorems we prove that
generalize the Graph Riemann-Roch theorem.

We believe---for a number of reasons---that this article 
just scratches the surface of the connection between sheaves and
the Baker-Norine theorem.  First, we only study the Baker-Norine
theorem for graphs with two vertices.  Second, there are likely
better ways to understand the sheaves we use in this article.
Third, there are a number of aspects for further study of these sheaves;
we mention some in the closing section of this article.

This is part of a general line of research
(\cite{friedman_cohomology,friedman_cohomology2,friedman_linear,
friedman_memoirs_hnc,abgirth,alice_thesis}) to investigate Grothendieck
topologies on finite categories and their possible applications to problems
in computer science theory, discrete mathematics, and linear algebra.
The main inspiration of this line of research is based
on Ben-Or's suggestion---in view of work on algebraic computations
\cite{dobkin,steele,benor}---that to settle problems in Boolean
complexity theory one should search for 
cohomology theories connected to Boolean functions; according to Ben-Or's
work, even if the zeroth Betti number in such a cohomology theory is
not particularly interesting, the higher Betti numbers may also be able
to produce lower bounds in complexity theory.  
This investigation of Grothendieck topologies has uncovered a number of
surprising connections to graph theory and linear algebra, notably a solution
to the Hanna Neumann Conjecture of the 1950's
\cite{friedman_memoirs_hnc}; another such connection is to linear
algebra, namely a study of
{\em $2$-independence} in \cite{alice_thesis}.
Furthermore, there are other problems in
computer science theory that can be stated in terms of
sheaves on
graphs, such as the construction of
relative expanders 
\cite{friedman_relative,friedman_kohler,mssI,mssIV}.
For the above reasons we believe that the application of
Grothendieck topologies to these fields is likely to yield further results.

We believe that the model and duality theorems in this article
may not only shed light on the
work of Baker-Norine, but also adds to the foundational tools and examples
of interesting sheaves based on finite categories.

The sheaves we study look similar to what one would get from a \v{C}ech
cohomology computation 
of an algebraic curve, but seem fundamentally different.  
One reason for this is that the first Betti number of the structure 
sheaf is not generally finite.  


Our methods are simple adaptations of known methods in algebraic
topology and geometry that closely
mimic standard methods and theorems for algebraic curves.
The duality theorems we prove are akin to the
Serre duality theorem, although only ``half'' of the duality theorem holds
for sheaves of interest to us; {\em strong duality} seems to only hold for
certain {\em torsion skyscraper} sheaves.  
We prove our duality theorems using the {\em method of Grothendieck},
whereby we prove the theorems for certain sheaves, and infer the rest
by short exact sequences and the five-lemma.


While do not entirely understand our models in a simple way,
we will make a number of remarks regarding our model.
The category we use for the models in this article is a category
with five objects, and is the opposite category of those used in
\cite{friedman_memoirs_hnc,alice_thesis}.  We discuss the possibility
of {\em local methods} using such categories, and explain how this
approach may be of interest to related discrete structures, such
as more general graphs and graphs of groups (\cite{serre_trees}).

{\color{red}
This version corrects the original version of this preprint.
In the original version, an erroneous version of Serre duality was
claimed for the $\cO$-modules in this paper that model the
two-vertex Riemann-Roch rank.
We believe that there is a version of Serre duality in this situation,
although it is more involved and subtle.
We hope to address this in a future paper.
For ease of reading, we will make our corrections in RED PRINT.
All the other results, including a number of interesting foundational
remarks, hold.
}

The rest of this article is organized as follows.
In Section~\ref{se_overview} we give an overview of the main results
in this paper, using terminology to be defined in later subsections;
the reader familiar sheaf theory and/or
with the Baker-Norine Graph Riemann-Roch Theorem
\cite{baker_norine} will likely understand some of this terminology.
In Section~\ref{se_simpleI} we give some fundamental definitions
regarding the categories and sheaves that we use.
Section~\ref{se_simpleI.5} discusses the connection of our notion of
a sheaf to the classical notion of a sheaf on a topological space;
it is not essential to the rest of the paper, but the reader familiar
with sheaves on topological spaces will likely benefit from the discussion.
The main modeling theorem in this paper is proven in
Section~\ref{se_models_grr}, which expresses the Baker-Norine
Graph Riemann-Roch rank plus one as the zeroth Betti number of
a sheaf on a finite category.
Sections~\ref{se_simpleII} and \ref{se_some_Betti} make some
Betti number computations we use in Section~\ref{se_rr},
where we compute the Euler characteristics of our sheaves;
this computation shows that
Baker-Norine theorem is equivalent to a duality result regarding
the zeroth and first Betti numbers of our sheaves that model the
Graph Riemann-Roch rank.
In Section~\ref{se_simpleIII} we give some general statements about
tools that are useful in cohomology, and in Section~\ref{se_simpleIV}
we describe the specific implications for our situation.
In Sections~\ref{se_global_hom} and \ref{se_duality_proof}
we prove 
{\red partial results regarding duality and the Graph Riemann-Roch
Theorem}.
Although our generalization regards ``global sections'' and ``global Ext
groups,'' there are a number of indications that these are special
cases of more general ``local theorems'' that are more general and---in
a sense---easier to prove.
We give a very concrete statement of a ``local version'' of the 
{\red some of our partial duality theorems} in
Section~\ref{se_simpleV}.
In Section~\ref{se_future} we make a number of remarks regarding future
directions of research, beginning with
some longer remarks on ``local methods'' for finite 
categories---including graphs and graphs of groups---and
ending with a few brief remarks.

We wish to thank Ehud De Shallit and Luc Illusie
for discussions regarding our models; their observations
will be described in Section~\ref{se_overview}.

Throughout this article, [SGA4] refers to
\cite{sga4.1}; [EGA1]
refers to \cite{ega1}.

\section{Overview of Results}
\label{se_overview}

In this section we summarize the main results of this paper,
using terminology to be defined later in this paper.
The reader familiar with sheaf theory or
the Baker-Norine {\em graph Riemann-Roch theorem}
will likely be familiar with some of this terminology already.

%

\subsection{The Main Modeling Result}


Our main ``modeling'' result in that for any field, $k$,
integer $r\ge 1$, and $\ourv d\in\integers^2$, we 
prove that
\begin{equation}\label{eq_main_modeling_result}
b^0(\cM_{k;r,\ourv d}) = {\rm GRRR}_r(\ourv d) + 1 \ ,
\end{equation}
where
\begin{enumerate}
\item 
${\rm GRRR}_r(\ourv d)$ denotes the {\em graph Riemann-Roch rank}---in
the sense of Baker-Norine---of the divisor $\ourv d$
on a graph with two vertices joined by $r$ edges,
\item
$b^0$ is the zeroth Betti number, i.e., the dimension
of the $k$-vector space of global sections of a sheaf, and
\item
$\cM_{k;r,\ourv d}$ are sheaves of $k$-vector spaces 
on fixed finite category $\CTwoV$ (see Definition~\ref{de_model_grrr_two}
and Figures~\ref{fi_cat_Or} and \ref{fi_Mkrd}).
\end{enumerate}
We emphasize that the above formula is valid for an arbitrary field $k$,
and at times we omit the $k$ in our notation for brevity.

The category $\CTwoV$ is category with five elements, endowed with
its coarsest topology; a {\em sheaf} of $k$-vector spaces
is therefore a contravariant
functor from $\CTwoV$ to the category of $k$-vector spaces;
equivalently, we may think of $\CTwoV$ as a topological space with
five points, five irreducible open subsets (which are the ``stalks''
of the five points), and 13 open subsets
(see Figure~\ref{fi_inclusions}).

%
%

\subsection{Cohomology Groups and the First Betti Number}

For any sheaf of vector spaces, $\cF$, on $\CTwoV$,
the {\em cohomology groups} of $\cF$, denoted $H^0(\cF),H^1(\cF)$, are the
right derived functors of $\cF\mapsto \Gamma(\cF)$, and can be computed as
the kernel and cokernel (respectively) of
\begin{equation}\label{eq_cohomology_formula}
\cF(B_1)\oplus\cF(B_2)\oplus \cF(B_3) \to \cF(A_1)\oplus \cF(A_2) \ ;
\end{equation}
we set $b^i(\cF)=\dim_k H^i(\cF)$.
[The maps $\cF(B_3)\to\cF(A_i)$ are
minus the restriction maps, but this doesn't change the $b^i$.]

We can also see that to any short exact sequence of sheaves, there is
a long exact sequence in cohomology, as usual.

We give a general method to compute the first Betti number, 
i.e., $b^1$, for many sheaves of interest to us, including sheaves
of the form
$\cO_{k,r}$, $\cM_{k,r,\ourv d}$.


\subsection{The Euler Characteristic Formula}

Next we derive the formula
\begin{equation}\label{eq_Euler_char_computation}
\chi(\cM_{r,\ourv d}) = d_1+d_2 - (r-1),
\end{equation}
where $\chi(\cF)=b_0(\cF)-b_1(\cF)$.
Our proof is to verify this formula for $\ourv d = \ourv 0$, and
to use a short exact sequence
\begin{equation}\label{eq_typical_short_exact}
0\to \cM_{r,\ourv d} \to  \cM_{r,\ourv d+(1,0)} \to {\rm Sky}(B_1,k)\to 0
\end{equation}
where $\Sky(B_1,k)$ is a {\em skyscraper sheaf}
(this exact sequence mimicks the standard one for algebraic curves, 
e.g., \cite{hartshorne}, page~296, just above Remark~IV.1.3.1).
The long exact sequence for \eqref{eq_typical_short_exact} then 
implies---after a simple computation of the Betti numbers of the
above skyscraper sheaf---that
if both Betti numbers of $\cM_{r,\ourv d}$ are finite,
or both of $\cM_{r,\ourv d+(1,0)}$ are finite, then all are finite and
$$
\chi(\cM_{r,\ourv d+(1,0)}) = \chi(\cM_{r,\ourv d}) + 1 .
$$
Doing the same for $(0,1)$ replacing $(1,0)$ and 
$B_2$ replacing $B_1$ in the skyscraper), we see that
$$
\chi(\cM_{r,\ourv d}) = \chi(\cM_{r,0}) + d_1+d_2 = d_1+d_2-(r-1)
$$
which imples \eqref{eq_Euler_char_computation}.

\subsection{The Euler Characteristic and Graph Riemann-Roch Theorem} 

The formula \eqref{eq_Euler_char_computation} says that
$$
b_0(\cM_{r,\ourv d}) - b_1(\cM_{r,\ourv d}) = d_1+d_2-(r-1) .
$$
Using \eqref{eq_main_modeling_result}, we see that the
Baker-Norine Graph Riemann-Roch Theorem is equivalent to the formula
\begin{equation}\label{eq_Baker_Norine}
b_0(\cM_{r,\ourv d}) - b_0(\cM_{r,K_r-\ourv d}) = d_1+d_2-(r-1) 
\end{equation} 
for what Baker and Norine call the {\em canonical divisor}, $K_r=(r-2,r-2)$.
It follows that the Baker-Norine Theorem can be stated as
\begin{equation}\label{eq_main_duality}
b_1(\cM_{r,\ourv d})  = b_0(\cM_{r,K-\ourv d}) \ .
\end{equation}
We know that \eqref{eq_Baker_Norine} holds since the Baker-Norine
Theorem is true; we imagine that it may not be difficult to prove
\eqref{eq_Baker_Norine} from scratch, using
the formulas that we develop for $b^i(\cM_{r,\ourv d})$.
The rest of this paper is mainly devoted to proving a generalization
of \eqref{eq_main_duality}, which gives another proof of this
\eqref{eq_main_duality}.
Our generalization relies on general facts (such as the Yoneda pairing
and the {\em Method of Grothendieck}), but involves a smaller number of
specific calculations.

\subsection{$\cO_{k,r}$-Modules and the Duality Theorems}

For any {\em sheaf of rings}, $\cO$, on $\CTwoV$, there is a standard
notion of a
{\em sheaf of $\cO$-modules} and the morphisms $\cF\to\cG$ of any two
$\cO$-modules, which we denote $\Hom_\cO(\cF,\cG)$.
In the special case where $\cO$ is a sheaf of $k$-algebras, 
$\Hom_\cO(\cF,\cG)$ has the natural structure of a $k$-vector space.
In the further special case where
$\cO=\underline k$, the
constant sheaf $k$, the category of
{\em sheaves of $\underline k$-modules} is the
same thing as the category of sheaves of $k$-vector spaces.

In this article we will define sheaves of rings $\cO_{k,r}$ on $\CTwoV$
for each integer $r\ge 1$ and field, $k$.  
The sheaves
$\cM_{k,r,\ourv d}$ are sheaves of $\cO_{k,r}$-modules.

Let $\omega$ be a sheaf of $k$-vector spaces on $\CTwoV$
such that $b^1(\omega)=1$.
For any sheaf of $k$-algebras, $\cO$,
and $i=0,1$,
we define ${\rm Duality}_{\cO,i}(\omega)$ to be class of $\cO$-modules, $\cF$,
for which duality holds for $H^i(\cF)$, in the sense that
the Yoneda pairing
\begin{equation}\label{eq_perfect_pairing}
H^i(\cF) \times \Ext^{1-i}_{\cO}(\cF,\omega) \to H^1(\omega)\isom k
\end{equation}
is a perfect pairing, where $\Ext_\cO^i$ denotes the
derived functors of $\Hom_\cO$; we define
${\rm Strong-Duality}_\cO(\omega)$ to be the class of $\cO$-modules, $\cF$,
for which
{\em strong duality} holds (with respect to $\cO$ and $\omega$),
meaning that duality holds for all $i\ge 0$ (hence necessarily
$H^i(\cF)=\Ext^i_\cO(\cF,\omega)=0$ for $i\ge 2$).

Our generalization of \eqref{eq_Baker_Norine} consists of the following
two theorems: for any field $k$ and integer $r\ge 1$,
set $\omega_{k,r}=\cM_{k,r,K_r}$
with $K_r=(r-2,r-2)$; then
\begin{enumerate}
\item $b^1(\omega_{k,r})=1$ and $\omega_{k,r}$ has an injective resolution
of length two;
\item for $i=1,2$,
${\rm Strong-Duality}_{\cO_{k,r}}(\omega_{k,r})$ contains the
({\em torsion skyscraper}) sheaf
$\cS_i=\Sky(B_i,k[y_i]/y_i k[y_i])$;
\item
{\red a previous version claimed that
$\cM_{k,r,\ourv d}\in {\rm Duality}_{\cO_{k,r},1}(\omega_{k,r})$ for
any $\ourv d\in\integers^2$, however this is not generally true;}
\item
for any $\ourv d,\ourv d'\in\integers^2$ there is a canonical morphism
{\red a previous version claimed that this was an isomorphism}
\begin{equation}\label{eq_Hom_two_Ms}
\Hom_{\cO_{k,r}}(\cM_{r,\ourv d},\cM_{r,\ourv d'})\to
\Gamma\bigl( \cM_{r,\ourv d'-\ourv d} \bigr) .
\end{equation} 
\end{enumerate}
We will prove \eqref{eq_Hom_two_Ms} by a simple computation:
in the case where $\ourv d=\ourv 0$, the morphism is induced
a natural inclusion $\cO_{k,r}\to \cM_{k,r,\ourv 0}$;
the case of general $\ourv d$ is proved similarly.
We will see that strong duality cannot hold for sheaves of the
form $\cM_{k,r,\ourv d}$ or $\cO_{k,r}$, but we will see some
formulas akin to strong duality, both for $H^1$ duality and $H^0$
duality.

More specifically, we define line bundles, $\cL_{k,r,\ourv d}$,
that are subsheaves of $\cM_{k,r,\ourv d}$, where $\cL_{k,r,\ourv 0}$
is just the structure sheaf (just as for algebraic curves).  
From the duality theorems
and some easier computations, we derive a number of interesting formulas
including
$$
\cL_{k,r,\ourv d}\otimes \cM_{k,r,\ourv d'} \isom \cM_{k,r,\ourv d+d'},
$$
and the duality theorem
$$
H^1(\cM_{k,r,\ourv d}) \isom \Hom_{\cO_{k,r}}(\cM_{k,r,\ourv d},
\omega_{k,r})^*, \quad \mbox{where}\quad \omega_{k,r}=\cM_{k,r,K_r}.
$$
A consequence of the above results is (the less interesting result) that
$$
H^0(\cM_{k,r,\ourv d}) \isom \Ext^1_{\cO_{k,r}}(\cL_{k,r,\ourv d},
\omega_{k,r})^*
$$
(which is the correct $H^0$ statement resembling Serre duality, except
that $\cL_{k,r,\ourv d}$ must be used instead of $\cM_{k,r,\ourv d}$
for the right-hand-side); this result follows from the above since
$$
\Ext^1_\cO(\cL_{k,r,\ourv d},\omega_{k,r}) \isom 
\Ext^1_\cO(\cO,\cM_{k,r,K_r-\ourv d}) \isom H^1(\cM_{k,r,K_r-\ourv d})
$$
(with $\cO=\cO_{k,r}$), whereupon we apply the above duality theorem
and \eqref{eq_Hom_two_Ms}.

We will also prove that {\red there is a morphism}
$$
\SHom(\cM_{k,r,\ourv d},\cM_{k,r,\ourv d'}) 
{\red \to}
\cM_{k,r,\ourv d' - \ourv d},
$$
but that
$$
\cM_{k,r,\ourv d}\tensor \cM_{k,r,\ourv d'}
$$
has an interesting ``twisted'' structure.

\subsection{Proof of the Duality Theorems and the ``Method of Grothendieck''}

The ``Method of Grothendieck'' implies that for any sheaf of
$k$-algebras $\omega$ on
$\CTwoV$ with $b^1(\omega)=1$, 
\begin{enumerate}
\item
if any two
elements of any short exact sequence are contained 
in ${\rm Strong-Duality}(\omega)$,
then all three are; and
\item
if $0\to\cF_1\to\cF_2\to\cF_3\to 0$ is exact, 
$\Ext^2(\cF_1,\omega)=0$, and $H^1$ duality holds for $\cF_1$ and $\cF_2$,
then $H^1$ duality holds for $\cF_3$.
\end{enumerate}

We first verify that $\Sky(B_1,k)\in{\rm Duality}(\omega)$, where
when $\Sky(B_1,k)$ is viewed as an $\cO_{k,r}$-module by
the exact sequence \eqref{eq_typical_short_exact} and the structure
of the $\cM_{k,r,\ourv d}$ as $\cO_{k,r}$-modules: this means that
$k$ is really
$$
\widetilde k \eqdef k[y_1]/(y_1) = k[y_1]/y_1 k[y_1],
$$
so that $k$ is a set of representatives of the classes of $\widetilde k$,
and $k[y_1]$ acts on $\widetilde k$ in that $y_1$ annihilates $\widetilde k$.
We use $\widetilde k$ instead of $k$ to remind ourselves that
$\widetilde k$ is $k$ with this particular structure of a $k[y_1]$-algebra.
We similarly show that $\Sky(B_2,k)\in{\rm Duality}(\omega)$ where
this time $k$ really means $k[y_2]/y_2 k[y_2]$.

It then follows from the Method of Grothendieck that
$\cM_{k,r,\ourv d}\in{\rm Duality}(\omega)$ for all $\ourv d$
provided that this holds for any one value of $\ourv d$; we 
then verify that it holds for any $\ourv d$ such that $d_1+d_2$ is
sufficiently smaller than zero.
{\red However $\cM_{k,r,\ourv d}\in{\rm Duality}(\omega)$ can
only hold in very simple cases, such as $r=1$; hence a
duality theory for $\cM_{k,r,\ourv d}$ needs to remedy this problem
and is likely more subtle.}

\subsection{Remarks on $\cO_{k,r}$, Remarks of De Shallit and Illusie}

For $r=1$, the sheaf $\cO_{k,r}$ represents the algebraic curve of
genus zero, where the sphere is covered by two ``hemispheres,'' one
omitting $\infty$ and one omitting $0$.  For $r\ge 2$ the sheaf
$\cO_{k,r}$ does not represent an algebraic curve; one way to see this
is that $b^1(\cO_{k,r})$ is infinite for such $r$.

For $r\ge 2$, part of $\cO_{k,r}$ is identical to $\cO_{k,1}$ and
represents two hemispheres.
Ehud De Shallit has pointed out to us that for $r\ge 2$ one can view
the hemispheres as being glued by a clutching map (or cylinder) that
wraps $r$ times around in the clutching.
Hence we get a topological idea (at least for $k=\complex$) of how
to think of $\cO_{k,r}$.
The sheaves $\cM_{k,r}$ are standard line bundles away from this
``clutching,'' but are of rank $r$, rather than one, on the cylinder
that performs this clutching.

Luc Illusie has made a number of interesting remarks and asked us
some interesting questions regarding
the $\cO_{k,r}$, such as asking whether they represent an algebraic
curve (which they do not seem to, since $b^1(\cO_{k,r})=\infty$).
Regarding another remark of Illusie:
the sheaves $\cM_{k,r,\ourv d}$ are
all finitely generated $\cO_{k,r}$-modules; however the
sheaves $\cM_{k,r,\ourv d}$ are not coherent $\cO_{k,r}$ because
of the rank $r$ part of $\cM_{k,r}$.  
We remark that all values of $\cO_{k,r}$ are PID's,
and hence any finitely generated $\cM_{k,r}$ module has values that 
are finite sums of free modules and torsion modules.
We also mention that at times $\cM_{k,r,\ourv 0}$ seems like a
``good substitute'' for
$\cO_{k,r}$; for example, 
$b^0(\cM_{k,r,\ourv 0})=1$ and $b^1(\cM_{k,r,\ourv 0})=r$, which are
the correct Betti numbers for a curve of genus $r-1$ (which is
the genus that Baker-Norine define for the graph on two vertices
joined by $r$ edges).
Another interesting question of Illusie is whether or not for genus
$r-1\ge 1$ our construction chooses a particular curve (or algebraic
space, etc.), since there are infinitely many such curves of fixed genus
at least one over an
algebraically closed field.
Also the map 
\begin{equation}\label{eq_shom_map_usually_isom}
\bigl( \SHom_\cO(\cF,\cG) \bigr)_x \to \Hom_{\cO_x}(\cF_x,\cG_x),
\end{equation}
(see [EGA1], Section~0.5.2.6 or \cite{hartshorne}, Proposition~III.6.8)
is not an isomorphism at $x=B_3$ (this corresponds to the clutching
cylinder) for sheaves 
$\cF=\cM_{r,\ourv d}$ and $\cG=\cM_{r,\ourv d'}$ and $r\ge 2$.

\section{Foundations, Part 1: Bipartite Graphs, Categories, and Sheaves}
\label{se_simpleI}

In this section we define the minimum regarding bipartite categories
and sheaves in order to state our main modeling theorem,
Theorem~\ref{th_main_grrr_model}
in Section~\ref{se_models_grr}.
Beyond this we give some common terminology regarding bipartite graphs,
and give a few examples of sheaves on bipartite categories.


\subsection{Bipartite Graphs}

\begin{definition}
By a {\em bipartite graph} we mean a triple $G=(\cA,\cB,E)$ where
$\cA,\cB$ are finite sets---the 
sets of {\em left vertices of $G$} and
{\em right vertices $G$} respectively---and $E\subset\cA\times\cB$, called
the {\em edges of $G$}.
We refer to the disjoint union $\cA\amalg\cB$\footnote{
  The notation $\cA\amalg\cB$ refers to the disjoint union of
  $\cA$ and $\cB$.  One can define this to be the subset of
  $(\cA\cup\cB)\times\{0,1\}$ consisting
  $\cA\times\{0\}\cup \cB\times \{1\}$.
  If $\cA,\cB$ are disjoint sets, we view the disjoint union more
  simply as $\cA\cup\cB$.  Technically, $\cA\amalg\cB$ is a limit,
  defined only up to unique isomorphism.
} as the {\em set of vertices of $G$}.
\end{definition}
In graph theory, this would be called, more precisely, a finite bipartite
graph with a given bipartition and without multiple edges.
One could allow for $\cA,\cB,E$ to be arbitrary (possibly infinite) sets, but
in this article all graphs are finite unless otherwise indicated.

\begin{definition}\label{de_complete}
Let $G=(\cA,\cB,E)$ be a bipartite graph.
\begin{enumerate}
\item By a {\em subgraph of $G$}
we mean a graph $G'=(\cA',\cB',E')$ such that
$\cA'\subset\cA$, $\cB'\subset\cB$, and $E'\subset E$
(and therefore $E'\subset E\cap(\cA'\times\cB')$).
In this case we write $G'\subset G$ and say that $G'$ is {\em included}
in $G$.
\item
We say that $a\in\cA$ and $b\in\cB$ are {\em adjacent} if
$(a,b)\in E$.
\item
We say that $G$ is {\em connected} if for any two vertices, $v,v'\in
\cA\amalg \cB$ there is a sequence $v=v_0,v_1,\ldots,v_k=v'$ of
vertices such that $v_i$ and $v_{i+1}$ are adjacent for all $i=0,\ldots,k-1$.
\item
By a {\em connected component} of $G$ we mean a connected subgraph
$G'\subset G$ that is 
maximal with respect to inclusions (i.e., if $G'\subset G''\subset G$ and
$G''$ is connected, then $G''=G'$).
\end{enumerate}
\end{definition}

\subsection{Bipartite Categories}

\begin{definition}\label{de_simple}
Let $G=(\cA,\cB,E)$ be a bipartite graph.  
The {\em bipartite category associated to $G$} is the
category $\cC$ given as follows:
\begin{enumerate}
\item
its set of objects, $\Ob(\cC)$, is the set $\cA\amalg\cB$ of vertices
of $G$;
\item
its morphisms, $\Fl(\cC)$, consist of identity morphisms and 
$E$, where each $(a,b)\in E=\Fl(\cC)$
has source $a$ and target $b$.
\end{enumerate}
\end{definition}
Note that there is no need to define composition for simple categories,
since if two morphisms are composable then at least one morphism is
an identity morphism.

\begin{definition} 
By the {\em bipartite category $(\cA,\cB,E)$} we mean the category,
$\cC$, associated 
to a bipartite graph $G=(\cA,\cB,E)$ as above.  We refer to
$\cA$ in $\cA\amalg \cB$ as the {\em left objects} of $\cC$, and
similarly to $\cB$ as the {\em right objects}.
When we speak of $(a,b)\in E$, we view $b\in \cB$ as the corresponding right
object of $\cC$, and $a\in\cA$ as the corresponding left object.
\end{definition}
By our conventions $\cA,\cB$ are finite sets unless otherwise indicated.

\subsection{Sheaves of $k$-Vector Spaces on Bipartite Categories}

\begin{definition}\label{de_sheaves_of_vs}
Let $k$ be a field, $G=(\cA,\cB,E)$ a bipartite graph, and
$\cC$ the associated bipartite category.
By a {\em sheaf (of $k$-vector spaces) on $\cC$} we mean the
data, $\cF$, consisting of:
\begin{enumerate}
\item a $k$-vector space $\cF(P)$ for each object, 
$P\in\Ob(\cC)=\cA\amalg\cB$, and
\item for each non-identity morphism of $\cC$, i.e., each
$e=(a,b)\in E$, a linear map
$\cF(e)\from \cF(b)\to\cF(a)$.
\end{enumerate}
\end{definition}

\begin{example}\label{ex_constant_sheaf}
Consider any any $k,G,\cC$ in Definition~\ref{de_sheaves_of_vs}.
If $M$
is a $k$-vector space, then we define the {\em constant sheaf $M$},
denoted $\underline M$, to be the sheaf all of whose values are $M$,
and all of whose restrictions are the identity maps.  In particular
$\underline k$ is sheaf whose values are all $k$ and whose restrictions
are the identity map.
\end{example}

\begin{example}\label{ex_restrict_extend_zero}
Consider any any $k,G,\cC$ in Definition~\ref{de_sheaves_of_vs}, and
consider a subgraph 
$G'=(\cA',\cB',E')$ of $G$.  If $\cF$ is a sheaf on $G$, then we use
$\cF_{G'}$ to denote the following sheaf: its values and restrictions
are given by those of $\cF$ for vertices and edges in $G'$, and its
other values are $0$, and other restriction maps the zero map.
We call $\cF_{G'}$ the {\em extension by zero of
the restriction of $\cF$ to $G'$}.
\end{example}
The above example will be elaborated upon in
Definition~\ref{de_restriction_extension}.
The sheaves $\underline k_{G'}$ were a fundamental building block of the
sheaves in
\cite{friedman_memoirs_hnc}.

\subsection{Global Sections and the Zeroth Betti Number}

\begin{definition}
Let $\cF$ be a sheaf of $k$-vector spaces
on $\cC$, the bipartite category $(\cA,\cB,E)$.
By {\em global section} of $\cF$ we mean the data consisting
of elements $f_P\in \cF(P)$ for each $P\in\Ob(\cC)=\cA\amalg\cB$ such that for
each $(a,b)\in E$ we have $\cF(e)\cF(b)=\cF(a)$.
We denote the set of global sections by $\Gamma(\cC,\cF)$ or $\Gamma(\cF)$;
this is a $k$-vector space in evident fashion, and we define the
{\em zero-th Betti number of $\cF$}, denoted $b^0(\cF)$, to be
$$
b^0(\cF) = \dim_k \Gamma(\cF).
$$
\end{definition}

\begin{example}
If $k$ is a field and $\cC$ the bipartite category $(\cA,\cB,E)$,
then $b^0(\underline k)$ is the number of connected components of $G$.
More generally,
if $G'\subset G$ is a subgraph, then $b^0(\underline k_{G'})$ is the
number of connected components of $G'$.
\end{example}


\ignore{

\subsection{Old Figure}

\begin{figure}

{
\begin{tikzpicture}[dot/.style={circle,inner sep=1.5pt,fill}]

\node [dot] at (0,0)(v1){} node[left]{$v_1$};
\node [dot] at (3,0)(v2){} node[right] at (3,0){$v_2$};
\node [dot] at (1.5,1.7)(v3){} node[right] at (1.5,1.7){$v_3$};
\node [dot] at (1.5,3.3)(v4){} node[above] at (1.5,3.3){$v_4$};

\path (v1) edge node[below] {$e_1$} (v2);
\path (v2) edge node[above, right] {$e_3$} (v3);
\path (v1) edge node[above,left] {$e_2$} (v3);
\path (v3) edge node[left] {$e_4$} (v4);
\end{tikzpicture}
}
{
\begin{tikzpicture} [dot/.style={circle,inner sep=1.5pt,fill}]

\node [dot] at (0,0)(v4){} node[left]{$v_4$};
\node [dot] at (0,1.2)(v3){} node[left] at (0,1.2){$v_3$};
\node [dot] at (0,2.4)(v2){} node[left] at (0, 2.4){$v_2$};
\node [dot] at (0,3.6)(v1){} node[left] at (0, 3.6){$v_1$};

\node [dot] at (4,0)(e4){} node[right] at (4,0){$e_4$};
\node [dot] at (4,1.2)(e3){} node[right] at (4,1.2){$e_3$};
\node [dot] at (4,2.4)(e2){} node[right] at (4, 2.4){$e_2$};
\node [dot] at (4,3.6)(e1){} node[right] at (4, 3.6){$e_1$};

\draw[->] (v1) -- (e1);
\draw[->] (v1) -- (e2);

\draw[->] (v2) -- (e1);
\draw[->] (v2) -- (e3);

\draw[->] (v3) -- (e2);
\draw[->] (v3) -- (e3);
\draw[->] (v3) -- (e4);

\draw[->] (v4) -- (e4);
\end{tikzpicture}
}
\caption{A graph $G$ and its associated bipartite category}
\end{figure}

Give some examples.

{\red Everything else now appears in Section~\ref{se_simple_more}.}

\subsection{Bipartite Categories and Topological Spaces}
\label{su_assoc_top_space}

The definitions in this paper are self-contained.  However, the reader
familiar with classical sheaf theory on topological spaces can view
our definitions as arising from sheaf theory.  We discuss this connection
in Appendix~\ref{ap_topological}; in this subsection we
will summarize the main points.

To any category $\cC$ that is a finite partial order,
we associate the finite topological space whose points are 
$X_{\cC}=\Ob(\cC)$ and whose open subsets are the downsets of $\cC$.
To each $P\in\cC$ we let $P^+$ be 
the downset of all $Q$ such that $\cC$ has a morphism $Q\to P$.
If $\cF$ is a sheaf on $X_{\cC}$ (in the classical sense), then the
map $P\mapsto \cF(P^+)$ gives a sheaf, $\cF|_{\cC}$ on $\cC$ in our sense.
The map $\cF\mapsto \cF|_{\cC}$ extends to an equivalence of categories;
in other words, given a sheaf $\cG$ on $\cC$ in our sense, there
is a sheaf $\cF$ on $X_{\cC}$ such that $\cF|_{\cC}=\cG$, and
$\cF$ is unique up to isomorphism.

The above discussion is valid for sheaves of $k$-vector spaces, but
more generally for sheaves with values in any category (e.g., $k$-vector
spaces, sets, rings) that has finite limits.

Moreover, the reader familiar with Grothendieck's notion of
sheaf theory (\cite{sga4.1}) can understand the 
entire discussion as saying that the topos 
associated to a finite topological space $X$ is equivalent to the topos
of presheaves (i.e., sheaves with the {\em grossi\'ere} or coarsest
topology)
on the subcategory of open irreducible subsets of $X$.

The upshot of these remarks is that
the study of sheaves on bipartite categories is equivalent to
the study
of classical sheaf theory on fintie one-dimensional topological spaces.
The sheaves on graphs used in \cite{friedman_memoirs_hnc}, in the case
where the graphs do not have self-loops, are special cases of
sheaves on bipartite categories.
}

\section{Foundations, Part 1.5: Topological Sheaf Theory}
\label{se_simpleI.5}

The following section is independent of the rest of this article.
However, this section will motivate some of the definitions in this
paper, especially that of {\em sheaf Hom} given in Section~\ref{se_simpleIII}.

In this section we assume the readier is familiar with sheaves on
topological spaces (e.g., \cite{hartshorne}, Section~2.1), and describe
the fundamentals of bipartite graphs, categories, and sheaves from this
point of view.

To fix ideas we first work with $\CTwoV$, and then make some remarks
on the more general case.

\begin{definition}
By the {\em two vertex graph Riemann-Roch topological space},
denoted $\XTwoV$, we mean the topological space on the five point set
$X=\XTwoV=\{ A_1,A_2,B_1,B_2,B_3 \}$ with a basis for the topology given
by the subsets:
\begin{enumerate}
\item for $i=1,2$
$$
U_{A_i}=\{A_i\}, \quad 
U_{B_i}=\{A_i,B_i\}, 
$$
and
\item 
$$
U_{B_3} = \{A_1,A_2,B_3\}.
$$
\end{enumerate}
\end{definition}
Figure~\ref{fi_CTwoVTop} shows the five points and $X$ and the basis
$\{U_x\}_{x\in X}$ for its topology.
\begin{figure}[h]  \centering 
\CTwoVTop
\caption{$X$ and the basis $\{U_x\}_{x\in X}$ of its topology.}
\label{fi_CTwoVTop}
\end{figure}
Figure~\ref{fi_two_cats} a diagram of this basis $\{U_x\}_{x\in X}$
and arrow representing the inclusions between these sets, and
the diagram of the points of $X$ and arrows indicating which
points are specializations of other points (these two diagrams are
essentially the same).
\begin{figure}[h]  \centering
\begin{tikzcd} 
U_{A_1} \arrow[r, rightarrow] \arrow[rd] & U_{B_1} \\
                                      & U_{B_3}\\
U_{A_2} \arrow[r, rightarrow] \arrow[ru] & U_{B_2}
\end{tikzcd}
\quad\quad
\begin{tikzcd}
A_1 \arrow[r, rightarrow] \arrow[rd] & B_1 \\
                                      & B_3 \\
A_2 \arrow[r, rightarrow] \arrow[ru] & B_2
\end{tikzcd}
\caption{Two (isomorphic) categories: 
The basis $\{U_x\}_{x\in X}$ under inclusion,
and the points of $X$ under specialization.}
\label{fi_two_cats}
\end{figure}

These diagrams are bipartite graphs, and the associated bipartite
category (which we define as $\CTwoV$ in Section~\ref{se_models_grr})
will be the underlying category used for all the main results in this paper.

The category $\CTwoV$ corresponds to either of these diagrams
(that are essentially the same).
Figure~\ref{fi_inclusions}
depicts the entire collection of open subsets of $X$, with arrows 
representing inclusions.

\begin{figure}[h] \centering 
\inclusions 
\caption{The inclusion maps of the topological space $X$. Red sets are elements of the basis.  }
\label{fi_inclusions}
\end{figure}

It is not hard to check that (1) for each $x\in X$, $U_x$ is the smallest
open subset of $X$ containing $x$ (so the value of a sheaf $\mathcal{F}$ on $U_x$ represents the stalk of $\mathcal{F}$ at $x$)
and (2) the $\{U_x\}_{x\in X}$ are precisely the open subsets that are
{\em irreducible} in the sense that they cannot be written as a union
of proper subsets\footnote{The empty set is the empty union and hence
 is not irreducible; the reader may alternatively just accept that by
 definition the empty set is not irreducible.
}

A sheaf, $\cF$, on $X$ in the classical sense (see Figure~\ref{fi_inclusions})
gives rise to a sheaf $\cF'$ in our sense by restricting its values and
restrictions to the subcategory of the open sets $\{U_x\}_{x\in X}$,
which is the category $\CTwoV$.
This gives a ``restriction functor,'' and we easily verify that this
is an equivalence of categories as follows:
given any sheaf $\cF'$ on $\CTwoV$ in our sense, 
we can extend $\cF'$ to a sheaf $\cF$ on $X$ as follows: 
(1) given any open subset
$W\subset X$, let $\CTwoV|_W$ be the full subcategory of $\CTwoV$ on the
objects $x\in W$;
(2) define $\cF(W)$ to be the
limit of the functor (as in [SGA4] Definition~I.2.1)
that is the restriction of $\cF'$ to
$\CTwoV|_W$ 
(this limit exists and is unique up to unique
isomorphism);
(3) we check that $\cF$ is a sheaf on $X$ in the classical sense on $\cF$;
(4) clearly the restriction of this $\cF$ to $\CTwoV$ is just $\cF'$;
(5) we check that the ``restriction functor'' is fully faithful.

Let us discuss the relative advantages of working with a classical sheaf
$\cF$ as opposed with its restriction $\cF'$ to $\CTwoV$, which is the
notion we use in this article (and used in \cite{friedman_memoirs_hnc}).
For one, $\Gamma(X,\cF)=\Gamma(\CTwoV,\cF')$ is simply $\cF(X)$,
which provides useful intuition and explains the term {\em global section}.
On the other hand, organizing the information as $\cF'$ seems simpler
than the classical notion; also the construction of cokernels and other
limits is done ``value-by-value'' in our notion; classically one needs
to sheafify certain limits.

One pedagogical advantage of the classical notion of sheaf is that
it is quite commonly used in many areas of mathematics,
and there are many excellent expositions that describe a number of concepts
that work well in practice.
For example, we know what is good definition of
{\em sheaf Hom} (see, e.g., \cite{hartshorne}, Exercise~II.1.15) and
what is meant by saying that to verify that an evident morphism
$$
\SHom(\cM_{k,r,\ourv d},\cM_{k,r,\ourv d'}) \to \cM_{k,r,\ourv d'-\ourv d}
$$
is an isomorphism; it suffices to check this {\em locally}.

One advantage of our notion of sheaf, i.e., working with
presheaves on a category $\cC$ endowed with the {\em topologie 
grossi\`ere}, is that it is more general (than finite topological spaces).
There are finite structures, such as graphs with {\em whole-loops}
and {\em half-loops}, and {\em graphs of groups}, which are not topological
spaces, where topos theory gives us generalizations of definitions
that tend to work well;
for example, with the {\em topologie grossi\`ere}, the notion of
{\em locally} at an object $P\in\Ob(\cC)$ means we consider
the {\em slice category} $\cC/P$ (i.e., of objects {\em over $P$})
and the functor $\cC/P\to\cC$.
This tells us how to define sheaf Hom and related notions more generally
for the aforementioned finite structures, and explains why locally
these structures have the same structure as graphs;
we shall make some more remarks on this in
Section~\ref{se_future}.

\section{Sheaves Modeling the Graph Riemann-Roch Rank
for a Graph With Two Vertices}
\label{se_models_grr}

In this section we will define a simple category $\CTwoV$ and
a class of sheaves, $\cM_{k,r,\ourv d}$, on $\CTwoV$, depending on
a field, $k$, an integer $r\ge 1$, and a $\ourv d=(d_1,d_2)\in\integers^2$.
The main theorem of this section is that
$$
b_0(\cM_{r,\ourv d}) -1 = {\rm GRRR}_r(d_1,d_2),
$$
where the right-hand-side denotes
the Baker-Norine
{\rm graph Riemann-Roch rank} of the {\em divisor} $\ourv d$
on a graph with two vertices.
We begin by defining the $\cM_{k,r,\ourv d}$ in two ways,
and then review the
Baker-Norine notion of rank for a divisor on a graph.

\subsection{The Definition of $\CTwoV$, $\cO_{k,r}$, and $\cM_{k,r,\ourv d}$}

In this section we give two equivalent definitions of $\cM_{r,\ourv d}$.

\subsubsection{Definition of $\cM_{r,\ourv d}$ as Sheaves of Vector Spaces}

\begin{definition}\label{de_CTWoV}
Let $\CTwoV$ be the bipartite category $(\cA,\cB,E)$ where
$$
\cA=\{A_1,A_2\}, \quad \cB=\{B_1,B_2,B_3\}
$$
and
$$
E = \{ (A_1,B_1), \ (A_2,B_2),\ (A_1,B_3),\ (A_2,B_3) \}.
$$
\end{definition}
We depict $\CTwoV$ in
in Figure~\ref{fi_cat_Or}.

Recall that if $k$ is a field and $x$ an indeterminate, then $k[x]$ denotes
the polynomials in $x$ with coefficients in $k$;
and similarly, $k[x,1/x]$ denotes the polynomials in $x$ and $1/x$, i.e.
the Laurent polynomials in $x$.

\begin{definition}\label{de_model_grrr_two}   
For a field, $k$, integer $r\ge 1$, and $\ourv d\in\integers^2$, we define
a sheaf of $k$-vector spaces $\cM=\cM_{k,r,\ourv d}$ on $\CTwoV$ as follows:
\begin{enumerate}
\item its values are
$$
\cM(B_1)=k[y_1], \quad \cM(B_2)=k[y_2], \quad \cM(B_3)=k[v,1/v]^{\oplus r}
$$
(i.e., the direct sum of $r$ copies of the $k$-vector space $k[v,1/v]$),
$$
\cM(A_1)=k[x_1,1/x_1], \quad \cM(A_2)=k[x_2,1/x_2],
$$
where $x_1,x_2,y_1,y_2,v$ are indeterminates;
\item its restriction maps are given as follows:
\begin{enumerate}
\item for $i=1,2$, $\cM(A_i,B_i)(p(y_i))=x_i^{d_i}p(1/x_i)$;
\item $\cM(A_1,B_3)(q_1(v),\ldots,q_r(v)) = 
q_1(x_1^r)+x_1 q_2(x_1^r) + \cdots+ x_1^{r-1} q_{r-1}(x_1^r)$;
\item $\cM(A_2,B_3)(q_1(v),\ldots,q_r(v)) = 
q_1(x_2^{-r})+ x_2 q_2(x_2^{-r})+\cdots+ x_2^{r-1} q_{r-1}(x_2^{-r})$.
\end{enumerate}
\end{enumerate}
\end{definition}
We will often suppress the field $k$ and the integer $r$ in the notation
$\cM_{k,r,\ourv d}$.
All the results in this article are independent of $k$ (which is not assumed
to be algebraically closed or even infinite).

\subsubsection{Intuitive Definition of $\cM_{k,r,\ourv d}$ Via
$\cO_{k,r}$}

Here we give a simpler and more intuitive way to describe the 
$\cM_{k,r,\ourv d}$ in terminology that we now explain.
We view $\cM_{k,r,\ourv d}$ (in Figure~\ref{fi_Mkrd})
as a {\em sheaf of $\cO_{k,r}$-modules} for 
a {\em sheaf of $k$-algebras} $\cO_{k,r}$ (see Figure~\ref{fi_cat_Or})
that we now define.

\begin{figure}
$$
\begin{tikzcd}
A_1 \arrow[r, rightarrow] \arrow[rd] & B_1 \\
    			              & B_3 \\
A_2 \arrow[r, rightarrow] \arrow[ru] & B_2 
\end{tikzcd}
\qquad
\begin{tikzcd} 
\cO(A_1)=k[x_1,1/x_1] 
\arrow[rr, leftarrow, "1/x_1\, \leftarrow\!\shortmid\; y_1"] 
&\qquad & k[y_1] =\cO(B_1) \\
&& \arrow[lld, "x_2^{-r} \, \leftarrow\!\shortmid\; v"'] 
  \arrow[llu, "x_1^{r} \, \leftarrow\!\shortmid\; v"] 
  k[v,1/v]  = \cO(B_3)
\\
\cO(A_2)=k[x_2,1/x_2]  
\arrow[rr, leftarrow, "1/x_2\, \leftarrow\!\shortmid\; y_2"] && 
k[y_2]=\cO(B_2)
\end{tikzcd}
$$
\caption{Depiction of $\cC=\cC_{\rm 2V}$ and $\cO=\cO_{k,r}$.}
\label{fi_cat_Or}
\end{figure}

\begin{figure}
$$
\begin{tikzcd} 
\cM(A_1)=k[x_1,1/x_1] 
\arrow[rr, leftarrow, "x_1^{d_1}\, \leftarrow\!\shortmid\; 1"] 
&\qquad & k[y_1]=\cM(B_1)  \\
&& \arrow[lld, "x_2^{j-1} \, \leftarrow\!\shortmid\; e_j"'] 
  \arrow[llu, "x_1^{j-1} \, \leftarrow\!\shortmid\; e_j"] 
  k[v,1/v]^{\oplus r} = \cM(B_3)
\\
\cM(A_2)=k[x_2,1/x_2] 
\arrow[rr, leftarrow, "x_2^{d_2}\, \leftarrow\!\shortmid\; 1"] && 
k[y_2]=\cM(B_2)
\end{tikzcd}
$$
\caption{Depiction of $\cM=\cM_{k,r,\ourv d}$ as an $\cO=\cO_{k,r}$-module.}
\label{fi_Mkrd}
\end{figure}

\begin{definition}\label{de_cO_r}
For a field, $k$ and an integer $r\ge 1$
we define 
$\cO_{k,r}$
as follows: its values are
the $k$-algebras
$$
\cO_{k,r}(B_1)=k[y_1], \quad \cO_{k,r}(B_2)=k[y_2], \quad \cO_{k,r}(B_3)=k[v,1/v] ,
$$
$$
\cO_{k,r}(A_1)=k[x_1,1/x_1], \quad \cO_{k,r}(A_2)=k[x_2,1/x_2],
$$
where $x_1,x_2,y_1,y_2,v$ are indeterminates and
the restrictions maps are the unique morphisms of $k$-algebras for which we have
$$
y_1 \mapsto 1/x_1, \quad y_2\mapsto 1/x_2, \quad
v \mapsto x_1^r, \quad v\mapsto x_2^{-r}.
$$
\end{definition}

Hence, the sheaf $k$-vector spaces $\cO_{k,r}$ can be viewed as a {\em sheaf
of rings} or as a {\em sheaf of $k$-algebras} in that its values and
restriction maps can be viewed in the category of rings or of
$k$-algebras.

In terms of $\cM=\cM_{k,r,\ourv d}$, for each
$P\in\Ob(\cC)=\cA\amalg\cB$, $\cM(P)$ is an $\cO(P)$-module
(free of rank $r$ for $P=B_3$ and otherwise free of rank $1$), and the
restriction maps of $\cM$ satisfy:
\begin{enumerate}
\item $\cM(A_i,B_i)(1)=x_i^{d_i}$ for $i=1,2$;
\item $\cM(A_i,B_3)(e_j) = x_i^{j-1}$ for $i=1,2$ and $j=1,\ldots,r$
where $e_j$ is the standard basis vector of $\cO(B_3)^{\oplus r}$
(that is $1$ in the $j$-th component, and $0$ elsewhere).
\end{enumerate}
This information uniquely determines the restriction maps of $\cM$ provided
that we insist that $\cM$ is a {\em sheaf of $\cO_{k,r}$-modules},
in a sense that we will formalize and discuss at length
in Section~\ref{se_simpleIII}.
Formally this means that for each $(a,b)\in E$, $r\in\cO(b)$ and
$m\in\cM(b)$,
\begin{equation}\label{eq_sheaf_of_mods_bi}
\cM(a,b) (rm) = \bigl( \cO(a,b)r\bigr) \, \bigl( \cM(a,b)m\bigr).
\end{equation}
Intuitively this means that $\cM(a,b)$ is compatible with the 
restriction map $\cO(a,b)$ and the module structures of the values of $\cM$.

\subsection{The Graph Riemann-Roch Rank of Baker and Norine}

In this section we review the Baker-Norine notion of
{\em Graph Riemann-Roch Rank}; see \cite{baker_norine,baker} for details.

Let $G=(V,E)$ be a graph (we allow multiple edges but not self-loops).
The Laplacian of $G$ is therefore 
a morphism $\integers^V\to\integers^V$. 
By a {\em divisor} we mean an element of $\integers^V$, and we say that
two divisors are {\em equivalent} if their difference is in the image
of the Laplacian. We say that a divisor is {\em effective} if all its 
components are non-negative.  If $\ourv d\in\integers^V$ is a divisor,
we define its {\em graph Riemann-Roch rank}, ${\rm GRRR}_G(\ourv d)$,
to be $-1$ if $\ourv d$ is not effective, and otherwise
to be the largest non-negative 
integer, $m$, such that for all $\ourv m \ge 0$, $\ourv m \in  \integers^V$, whose sum of components is
$m$,
we have that $\ourv d-\ourv m$ is effective.

In this article we restrict our attention to the case where $V=\{v_1,v_2\}$
and $E$ consists of $r$ edges joining $v_1$ and $v_2$.
In this case we identify $\integers^V$ with $\integers^2$ arbitrarily;
by symmetry, the rank of $(d_1,d_2)$ is the same as that of $(d_2,d_1)$,
so the rank is defined unambiguously.

\begin{definition}
Let $r\ge 1$ be an integer and $\ourv d=(d_1,d_2)\in\integers^2$.
We use
${\rm GRRR}_G(\ourv d)$
to denote the graph Riemann-Roch rank of $\ourv d$ as a divisor
on a graph of two vertices joined by $r$ edges
(in the Baker-Norine \cite{baker_norine} sense above).
\end{definition}
Hence, ${\rm GRRR}_G(\ourv d)$ depends only on the class of $\ourv d$
in $\integers^2/(r,-r)\integers$.

We may equivalently define
${\rm GRRR}_G(\ourv d)$ inductively as:
\begin{enumerate}
\item 
$-1$ if $d_1+d_2\le -1$, or if $d_1+d_2=0$ and $r$ does not divide $d_1=-d_2$;
\item 
$0$ if $d_1+d_2=0$ and $r$ does divide $d_1=-d_2$;
\item
$$
1+\min\bigl(  {\rm GRRR}_G(d_1-1,d_2),  {\rm GRRR}_G(d_1,d_2-1) \bigr)
$$
inductively for $d_1+d_2\ge 1$.
\end{enumerate}

\subsection{The Main Modeling Theorem}

The following theorem is the main result of this section.

\begin{theorem}\label{th_main_grrr_model}
Let $k$ be any field, $r\ge 1$ an integer, and 
$\ourv d=(d_1,d_2)\in\integers^2$.  Then
$$
b_0\bigl( \cM_{k,r,\ourv d}) = {\rm GRRR}_r(d_1,d_2) + 1 
$$
with $\cM_{k,r,\ourv d}$ as in
Definition~\ref{de_model_grrr_two}, and GRRR as above.
\end{theorem}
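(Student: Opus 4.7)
The plan is to compute $b_0(\cM_{k,r,\ourv d})$ directly by parametrizing its global sections, reduce the answer to a closed form in the Euclidean data $d_i = q_i r + r_i$ (with $0 \leq r_i < r$), and then match it against the same closed form for ${\rm GRRR}_r(\ourv d)+1$ obtained by minimization from the Baker--Norine definition.

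A global section of $\cM = \cM_{k,r,\ourv d}$ is a tuple $(f_1, f_2, \ourv q, g_1, g_2)$ with $f_i \in k[y_i]$, $\ourv q \in k[v,1/v]^{\oplus r}$, and $g_i \in k[x_i,1/x_i]$, satisfying the four restriction-compatibility conditions. The key observation is that the $k$-linear map $\Psi_1\from k[v,1/v]^{\oplus r} \to k[x_1,1/x_1]$ sending $(q_1,\ldots,q_r) \mapsto \sum_{j=1}^r x_1^{j-1} q_j(x_1^r)$ is a bijection, since every integer $n$ has a unique expression $n = (j-1) + rm$ with $j \in \{1,\ldots,r\}$ and $m \in \integers$; the analogous $\Psi_2$ into $k[x_2,1/x_2]$ is likewise a bijection. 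Consequently, a global section is uniquely encoded by a family of scalars $c_{s,m} \in k$ (finitely many nonzero) indexed by $s \in \{0,\ldots,r-1\}$ and $m \in \integers$, where $c_{s,m}$ is simultaneously the coefficient of $x_1^{s+rm}$ in $g_1$ and of $x_2^{s-rm}$ in $g_2$; the remaining conditions that $g_i$ lies in the image of $p \mapsto x_i^{d_i} p(1/x_i)$ translate to $c_{s,m} = 0$ unless $s + rm \leq d_1$ and $s - rm \leq d_2$. Counting admissible pairs gives
$$b_0(\cM_{k,r,\ourv d}) = \sum_{s=0}^{r-1} \max\bigl(0,\ \lfloor (d_1 - s)/r \rfloor + \lfloor (d_2 - s)/r \rfloor + 1\bigr).$$
By symmetry in $d_1 \leftrightarrow d_2$ I may assume $r_1 \leq r_2$. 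The sum then splits on the ranges $[0, r_1]$, $[r_1+1, r_2]$, $[r_2+1, r-1]$ (on which the floor-sum takes the constant values $q_1+q_2$, $q_1+q_2-1$, $q_1+q_2-2$), yielding: $b_0 = 0$ if $q_1+q_2 \leq -1$; $b_0 = r_1+1$ if $q_1+q_2 = 0$; and $b_0 = (q_1+q_2-1)r + r_1+r_2+2$ if $q_1+q_2 \geq 1$.

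For ${\rm GRRR}_r(\ourv d)+1$, enumerating integers $k$ with $d_1' + kr \geq 0$ and $d_2' - kr \geq 0$ shows that the linear system of $\ourv d'$ has cardinality $\max(0, \lfloor d_1'/r \rfloor + \lfloor d_2'/r \rfloor + 1)$, so by the Baker--Norine definition ${\rm GRRR}_r(\ourv d) + 1 = 0$ when $|\ourv d| = 0$ (i.e.\ $q_1+q_2 \leq -1$), and otherwise ${\rm GRRR}_r(\ourv d) + 1 = \min\{e_1+e_2 : e_i \geq 0,\ \lfloor (d_1-e_1)/r \rfloor + \lfloor (d_2-e_2)/r \rfloor \leq -1\}$. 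Parametrize by $j_i := q_i - \lfloor (d_i-e_i)/r \rfloor \geq 0$: the cheapest $e_i$ producing a given $j_i$ is $0$ when $j_i = 0$ and $r(j_i-1) + r_i + 1$ when $j_i \geq 1$, and the constraint becomes $j_1 + j_2 \geq q_1+q_2+1$. A short case analysis over whether $j_1$, $j_2$, or neither vanishes produces exactly the same three-case formula as for $b_0$, completing the proof. The main step here is this case analysis; in particular, when $q_1+q_2 \geq 1$ the optimum occurs with both $j_i \geq 1$ rather than at a boundary $j_i = 0$, which reduces to the elementary inequality $r_i + 1 \leq r$.
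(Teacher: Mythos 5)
Your proof is correct and takes essentially the same approach as the paper: in both, the key step is that a global section of $\cM_{k,r,\ourv d}$ is determined by its value at $B_3$, so that $b^0(\cM_{k,r,\ourv d})$ equals the number of pairs $(s,m)$ with $0\le s\le r-1$, $s+rm\le d_1$ and $s-rm\le d_2$, which is exactly the paper's count ${\rm LatCount}_r(\ourv d)$ of Theorem~\ref{th_LatCount}. The only difference is bookkeeping in the purely combinatorial half: the paper normalizes to $0\le d_2\le r-1$ and matches the lattice count against the explicit rank formula of Theorem~\ref{th_GRRR_formula}, whereas you evaluate both the count and the Baker--Norine minimization in closed form via the Euclidean data $d_i=q_ir+r_i$; these are equivalent elementary case analyses.
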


We arrived at this theorem by ad hoc process of
``trial and error,'' rather than
a systematic method; there may be other ways to algebraically model
graph Riemann-Roch questions.

The goal of the rest of this section is to prove 
Theorem~\ref{th_main_grrr_model}.

\subsection{A Formula for the Graph Riemann-Roch Rank for a Graph with
Two Vertices}

Let us give an explicit formula for the graph Riemann-Roch rank.

\begin{theorem}\label{th_GRRR_formula}
With notation as above, let
$\ourv d\in\integers^2$ with $0\le d_2\le r-1$.  Then
\begin{enumerate}
\item if $d_1\le -1$, we have ${\rm GRRR}_r(\ourv d)=-1$;
\item if $0\le d_1\le r-1$, we have ${\rm GRRR}_r(\ourv d)=\min(d_1,d_2)$;
\item if $r-1\le d_1$, we have
${\rm GRRR}_r(\ourv d)=d_1+d_2-r+1$.
\end{enumerate}
\end{theorem}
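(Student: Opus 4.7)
The plan is to work directly from the Baker-Norine definition, using the following elementary criterion: a divisor $(a,b) \in \integers^2$ is equivalent to an effective divisor on the graph on two vertices joined by $r$ edges if and only if there exists $k \in \integers$ with $a + kr \ge 0$ and $b - kr \ge 0$; equivalently, the closed integer interval $[-a, b]$ contains a multiple of $r$. This reduces the entire theorem to bookkeeping about which residue classes modulo $r$ are hit by such intervals.

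Case 1 is immediate: under $d_1 \le -1$ and $0 \le d_2 \le r-1$ one has $[-d_1, d_2] \subset [1, r-1]$, which contains no multiple of $r$, so the rank is $-1$.

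For Case 2, by the symmetry of ${\rm GRRR}_r$ in its arguments I may assume $d_1 \le d_2$, so the claim is that the rank equals $d_1$. The lower bound is immediate: any $\ourv m \ge 0$ of degree $d_1$ has $m_i \le d_1 \le d_i$, making $\ourv d - \ourv m$ literally effective. The upper bound follows from Case 1 applied to $\ourv m = (d_1 + 1, 0)$, since $\ourv d - \ourv m = (-1, d_2)$ is not equivalent to effective.

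Case 3 is the substantive one. For the lower bound, I claim that for every effective $\ourv m$ of degree $d_1 + d_2 - r + 1$, the integer interval $[m_1 - d_1, d_2 - m_2]$ of admissible values of $kr$ has length $(d_1 + d_2) - (m_1 + m_2) = r - 1$, and so consists of $r$ consecutive integers, which necessarily include a multiple of $r$; hence $\ourv d - \ourv m$ is equivalent to effective. For the upper bound, I would exhibit the explicit witness $\ourv m = (d_1 - r + 1, d_2 + 1)$, which is effective precisely because $d_1 \ge r - 1$, has degree $d_1 + d_2 - r + 2$, and satisfies $\ourv d - \ourv m = (r - 1, -1)$; the corresponding interval $[1 - r, -1]$ contains $r - 1$ consecutive integers whose residues modulo $r$ are $\{1, 2, \ldots, r - 1\}$, missing $0$, so this class is not equivalent to effective.

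The main obstacle is the lower bound of Case 3, where one must extract the precise length-$(r - 1)$ interval and invoke the pigeonhole fact that any $r$ consecutive integers contain a multiple of $r$. This combinatorial input is exactly the manifestation of the genus $r - 1$ of the two-vertex graph in the Baker-Norine rank function, and is the content that the duality theorems proved later in the paper encode in sheaf-theoretic form.
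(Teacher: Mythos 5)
Your proof is correct and takes essentially the same route as the paper: the same direct lower bounds, the same witnesses $(d_1+1,0)$ and $(d_1-r+1,d_2+1)$ for the upper bounds (each reduced to Case 1), and the same normalization modulo $(r,-r)$ in the lower bound of Case 3, which you package as ``an interval of $r$ consecutive integers contains a multiple of $r$'' while the paper instead replaces $\ourv m$ by an equivalent $\ourv m'$ with $d_2-r+1\le m_2'\le d_2$. Your upfront criterion that $(a,b)$ is equivalent to an effective divisor iff $[-a,b]$ contains a multiple of $r$ is a clean but equivalent repackaging of what the paper does case by case.
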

We remark that any divisor, $\ourv d$, is linearly equivalent to one where
$0\le d_2\le r-1$ by adding the appropriate multiple of $(r,-r)$; for this
reason the above theorem essentially determines all values of
${\rm GRRR}_r$.
\begin{proof}
The first claim is easy: if $(d_1,d_2)+m(r,-r)$ is effective, then we need
$m\ge 1$ in view of the first component, but then $d_2+m(-r)<0$ so that
$(d_1,d_2)+m(r,-r)$ can never be effective.

To prove the second claim, let $k=\min(d_1,d_2)\ge 0$.  To see that
${\rm GRRR}_r(d_1,d_2)\ge k$, we see that if $\ourv m\ge\ourv 0$
and $m_1+m_2=k$, then $m_i\le k$ for $1,2$, and hence $\ourv d-\ourv m\ge 0$,
which is effective; hence ${\rm GRRR}_r(d_1,d_2)\ge k$.
Let us show that equality holds: 
by symmetry we may assume that $d_1\le d_2$, and hence $d_1=k$; then
$(d_1,d_2)-(k+1,0)=(-1,d_2)$,
and by the first claim this divisor is not linearly equivalent to an
effective divisor.  
Since $(k+1,0)$ is effective and has sum of components equal to $k+1$,
we have ${\rm GRRR}_r(d_1,d_2)<k+1$.
Hence ${\rm GRRR}_r(d_1,d_2)=k$.

To prove the third claim, let us first show that in this case
\begin{equation}\label{eq_ineq_grrr1}
{\rm GRRR}_r(d_1,d_2) < d_1+d_2-r+2.
\end{equation} 
Consider 
$\ourv m=(d_1-r+1,d_2+1)$, which is clearly effective; we have
$\ourv d-\ourv m=(r-1,-1)$, which by the first claim is not linearly 
equivalent to an effective divisor, and hence \eqref{eq_ineq_grrr1}.
Let us now show that
\begin{equation}\label{eq_ineq_grrr2}
{\rm GRRR}_r(d_1,d_2) \ge d_1+d_2-r+1.
\end{equation} 
It suffices to
consider a divisor $\ourv m\ge 0$ with 
$m_1+m_2= d_1+d_2-r+1$ and to show that $\ourv d-\ourv m$ is equivalent
to an effective divisor.
But $\ourv m$ is equivalent to
a divisor $\ourv m'$ with $d_2-r+1\le m_2'\le d_2$, and in this case
$m_1+m_2=m_1'+m_2'$ implies that 
$m_1'= d_1+d_2-r+1 -m_2'$.  Hence
$\ourv d-\ourv m'=(r+m_2'-1-d_2,d_2-m_2')$, whose second component is clearly
non-negative, and whose first component is
$$
r+m_2'-1-d_2 =r-2+(m_2'-d_2) \ge r-1+(1-r)\ge  0.
$$
Hence $\ourv d-\ourv m'$ is effective, and it is 
equivalent to $\ourv d-\ourv m$.
This establishes \eqref{eq_ineq_grrr2}.
Together with \eqref{eq_ineq_grrr1} we have proven the third claim.
\end{proof}

\subsection{Alternative Description of ${\rm GRRR}_r(\ourv d)$}

Here is another description of ${\rm GRRR}_r(\ourv d)$ that is easier
to visualize and may be simpler to work with when generalizing
the methods in this article to graphs with more than two vertices.

\begin{theorem}\label{th_GRRR_S_0}
For an integer $r\ge 1$ and $i\ge 0$,
let $S_i$ denote the subset of
points $(d_1,d_2)\in\integers^2$ for which
${\rm GRRR}_r(d_1,d_2) = i$.
Then
\begin{enumerate}
\item for $0\le d_2\le r-1$, $S_0$ consists of those $(d_1,d_2)$
with either $d_1=0$ or $d_2=0$;
\item if $\ourv d$ lies ``to the left of $S_0$,'' 
meaning that $(a,d_2)\in S_0$ for some
$a>d_1$ but $(d_1,d_2)\notin S_0$,
then ${\rm GRRR}_r(\ourv d)=-1$; and
\item ${\rm GRRR}_r(d_1,d_2)$ is the $L^1$ distance to $S_0$ if $\ourv d$
lies ``to the right of $S_0$.''
\end{enumerate}
\end{theorem}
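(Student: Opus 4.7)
The plan is to deduce all three claims from Theorem~\ref{th_GRRR_formula} together with the linear equivalence $\ourv d \sim \ourv d+(r,-r)$, under which both the rank and membership in $S_0$ are preserved. Every $\ourv d \in \integers^2$ is equivalent to a unique $\ourv d'$ with $0 \le d_2' \le r-1$, so it suffices to understand each claim on this strip and then transport it by $(r,-r)$-translations.

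For claim~(1), I would enumerate directly which points in the strip $\{0\le d_2\le r-1\}$ have rank $0$, using the three cases of Theorem~\ref{th_GRRR_formula}: the case $d_1\le -1$ gives rank $-1$; the case $0\le d_1\le r-1$ gives $\min(d_1,d_2)$, which vanishes exactly when $d_1=0$ or $d_2=0$; and the case $d_1\ge r-1$ gives $d_1+d_2-r+1$, which vanishes only at $(r-1,0)$, already counted. This identifies the part of $S_0$ in the strip as the ``L-shape'' $L = \{(0,d_2):0\le d_2\le r-1\}\cup\{(d_1,0):0\le d_1\le r-1\}$.

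For claim~(2), I would shift $\ourv d$ by a multiple of $(r,-r)$ to move it into the strip; this preserves both the rank and the property of being ``to the left of $S_0$,'' since shifting $\ourv d$ and an $S_0$-point $(a,d_2)$ (with matching second coordinate) by the same $k(r,-r)$ keeps $a>d_1$ intact and keeps the shifted point in $S_0$ by $(r,-r)$-periodicity. In the strip, claim~(1) shows that the $S_0$-points with second coordinate $d_2\in[1,r-1]$ form the singleton $\{(0,d_2)\}$, while those with $d_2=0$ form $\{(a,0):0\le a\le r-1\}$; the existence of an $S_0$-point with first coordinate strictly greater than $d_1$ therefore forces $d_1<0$, and then Theorem~\ref{th_GRRR_formula} gives rank $-1$.

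For claim~(3), I would again reduce to the strip, noting that the $L^1$ distance to $S_0$ is translation invariant and that $S_0$ is $(r,-r)$-periodic. Within the strip I would check the two remaining cases of Theorem~\ref{th_GRRR_formula}: in the subcase $d_1,d_2\ge 1$ and $d_1\le r-1$, the closest $S_0$-points are $(0,d_2)$ and $(d_1,0)$ in $L$, both at distance $\min(d_1,d_2)$; in the subcase $d_1\ge r$, the closest $S_0$-points are $(r-1,0)$ in $L$ and $(r,-1)$ in the translate $L+(r,-r)$, both at distance $d_1+d_2-r+1$. In both cases this matches the rank from Theorem~\ref{th_GRRR_formula}. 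The main obstacle is to verify that no other translate $L+k(r,-r)$ yields a strictly smaller distance; this reduces to short inequalities exploiting $0\le d_2\le r-1$ and the range of $d_1$, and involves no conceptual difficulty.
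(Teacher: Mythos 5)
Your proposal is correct, and for claims (1) and (2) it is essentially the paper's argument: reduce to the strip $0\le d_2\le r-1$ using invariance under adding multiples of $(r,-r)$, then read the answer off Theorem~\ref{th_GRRR_formula} (the paper's proof consists of exactly these citations). The real difference is claim (3): the paper dismisses it as ``immediate from the definition,'' i.e.\ from the inductive characterization ${\rm GRRR}_r(\ourv d)=1+\min\bigl({\rm GRRR}_r(d_1-1,d_2),{\rm GRRR}_r(d_1,d_2-1)\bigr)$ for $d_1+d_2\ge 1$, which is precisely the recursion satisfied by the $L^1$ distance to $S_0$ on the right-hand side; you instead compute the distance directly from the closed formula and from the description of $S_0$ as the union of translates $L+k(r,-r)$ of the L-shape $L$ (your enumeration of $L$, with $0\le d_1\le r-1$ on the horizontal leg, is in fact sharper than the theorem's literal wording of claim (1) and matches Figure~\ref{fi_r_4}). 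Your route is more explicit but leaves the ``no other translate is strictly closer'' step deferred; it is indeed routine, and the cleanest way to discharge it in your framework is: for $d_1\ge r$, every point of $L$, hence of $S_0$, has coordinate sum in $[0,r-1]$, so any $\ourv s\in S_0$ has $L^1$ distance at least $(d_1+d_2)-(r-1)$ from $\ourv d$, with equality at $(r-1,0)$; for $1\le d_1\le r-1$, every point of $L+k(r,-r)$ with $k\ge 1$ has first coordinate $\ge r>d_1$ and second coordinate $\le -1<d_2$, giving distance at least $(r-d_1)+(d_2+1)>d_2\ge\min(d_1,d_2)$, and symmetrically for $k\le -1$.

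Two small precision points. In claim (2), for the row $d_2=0$ the existence of an $S_0$-point $(a,0)$ with $a>d_1$ does not by itself force $d_1<0$ (e.g.\ $r=4$, $d_1=1$, $a=2$); you must also invoke the hypothesis $(d_1,0)\notin S_0$, which rules out $0\le d_1\le r-1$, while $d_1\ge r$ is impossible because $a\le r-1$, so $d_1<0$ follows. In claim (3), for large $d_1$ the points $(r-1,0)$ and $(r,-1)$ are not the only nearest points of $S_0$ (more distant translates tie), but since only the value of the distance matters, this does not affect your argument.
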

We illustrate this theorem in Figure~\ref{fi_r_4};
as above, we reduce to the case $0\le d_2\le r-1$, which in Figure~\ref{fi_r_4}
corresponds to
the $r$ (horizontal) rows beginning with the $d_1$-axis $d_2=0$, and the
$r-1$ rows above this axis.
\begin{proof}
Since ${\rm GRRR}_r(\ourv d)$ and therefore $S_0$ are invariant under
adding multiples of $(r,-r)$, it suffices to prove them in the case where
$0\le d_2\le r-1$.

Claim~(1) is immediate from claim~(2) of Theorem~\ref{th_GRRR_formula}.
Claim~(2) is immediate from claim~(1) of Theorem~\ref{th_GRRR_formula}.
Claim~(3) is immediate from the definition of the Graph Riemann-Roch
Rank.
\end{proof}


\begin{figure}[ht]
  \centering
  \begin{tikzpicture}[scale=0.6]
    \coordinate (Origin)   at (0,0);
    \coordinate (XAxisMin) at (-5,0);
    \coordinate (XAxisMax) at (10,0);
    \coordinate (YAxisMin) at (0,-5);
    \coordinate (YAxisMax) at (0,10);
    \draw [thin, gray,-latex] (XAxisMin) -- (XAxisMax);
    \draw [thin, gray,-latex] (YAxisMin) -- (YAxisMax);

    \clip (-5,-5) rectangle (10cm,10cm); 
    \foreach \x in {-12,-11,...,12}{
     \foreach \y in {-12,-11,...,12}{
      \node[draw,circle,inner sep=2pt,fill] at (1*\x,1*\y) {};
      }
    }
\draw [ultra thick,blue] (Origin)
        -- (0,3) node [below left] {$S_0$};
    \draw [ultra thick,blue] (0,3)
        -- (-1,4) ;
     \draw [ultra thick,blue] (-1,4)
        -- (-4,4) ;
     \draw [ultra thick,blue] (-4,4)
        -- (-4,7) ;
      \draw [ultra thick,blue] (-4,7)
        -- (-5,8) ;
      \draw [ultra thick,blue] (0,0)
        -- (3,0) ;
     \draw [ultra thick,blue] (3,0)
        -- (4,-1) ;
     \draw [ultra thick,blue] (4,-1)
        -- (4,-4) ;
     \draw [ultra thick,blue] (4,-4)
        -- (7,-4) ;
      \draw [ultra thick,blue] (7,-4)
        -- (8,-5) ;

      \draw [ultra thick,blue] (1,1)
        -- (1,3) node [below left] {$S_1$};
     \draw [ultra thick,blue] (1,3)
        -- (-1,5) ; 
      \draw [ultra thick,blue] (-1,5)
        -- (-3,5) ;
      \draw [ultra thick,blue] (-3,5)
        -- (-3,7) ;
       \draw [ultra thick,blue] (-3,7)
        -- (-5,9) ;
       \draw [ultra thick,blue] (0,3)
        -- (-1,4) ;
       \draw [ultra thick,blue] (1,1)
        -- (3,1) ;
       \draw [ultra thick,blue] (3,1)
        -- (5,-1) ;
       \draw [ultra thick,blue] (5,-1)
        -- (5,-3) ;
       \draw [ultra thick,blue] (5,-3)
        -- (7,-3) ;
        \draw [ultra thick,blue] (7,-3)
        -- (9,-5) ;

       \draw [ultra thick,blue] (2,2)
        -- (2,3) node [below left] {$S_2$};
     \draw [ultra thick,blue] (2,3)
        -- (-1,6) ; 
      \draw [ultra thick,blue] (-1,6)
        -- (-2,6) ;
      \draw [ultra thick,blue] (-2,6)
        -- (-2,7) ;
       \draw [ultra thick,blue] (-2,7)
        -- (-5,10) ;
       \draw [ultra thick,blue] (0,3)
        -- (-1,4) ;
       \draw [ultra thick,blue] (2,2)
        -- (3,2) ;
       \draw [ultra thick,blue] (3,2)
        -- (6,-1) ;
       \draw [ultra thick,blue] (6,-1)
        -- (6,-2) ;
       \draw [ultra thick,blue] (6,-2)
        -- (7,-2) ;
        \draw [ultra thick,blue] (7,-2)
        -- (10,-5) ;
  
  \draw [ultra thick,blue] (-4,10)
        -- (10,-4);
  \draw [ultra thick,blue] (-3,10)
        -- (10,-3) ;
  \draw [ultra thick,blue] (-2,10)
        -- (10,-2) ;
  \draw [ultra thick,blue] (-1,10)
        -- (10,-1) ;
  \draw [ultra thick,blue] (-0,10)
        -- (10,-0) ;
  \draw [ultra thick,blue] (1,10)
        -- (10,1) ;
  \draw [ultra thick,blue] (2,10)
        -- (10,2) ;
  \draw [ultra thick,blue] (3,10)
        -- (10,3) ;
  \draw [ultra thick,blue] (4,10)
        -- (10,4) ;
  \draw [ultra thick,blue] (5,10)
        -- (10,5) ;
  \draw [ultra thick,blue] (5,10)
        -- (10,5) ;
  \draw [ultra thick,blue] (6,10)
        -- (10,6) ;
  \draw [ultra thick,blue] (7,10)
        -- (10,7) ;
  \draw [ultra thick,blue] (8,10)
        -- (10,8) ;
   \draw [ultra thick,blue] (9,10)
        -- (10,9) ;

  \node[circle,fill=red] at (0,0){};
  \node[circle,fill=red] at (1,1){};
  \node[circle,fill=red] at (2,2){};
  \node[circle,fill=red] at (3,3){};
   
   \node[circle,fill=red] at (-4,4){};
   \node[circle,fill=red] at (-3,5){};
   \node[circle,fill=red] at (-2,6){};
   \node[circle,fill=red] at (-1,7){};
    
   \node[circle,fill=red] at (4,-4){};
   \node[circle,fill=red] at (5,-3){};
   \node[circle,fill=red] at (6,-2){};
   \node[circle,fill=red] at (7,-1){}; 
  
  \end{tikzpicture}
  \caption{The case $r=4$ in the $(d_1,d_2)$-plane}
  \label{fi_r_4}
\end{figure}

\subsection{Counting Lattice Points in the Down Cone}

In this subsection we prove the following result.

\begin{theorem}\label{th_LatCount}
For an integer $r\ge 1$, let
$$
L_r \eqdef \bigcup_{i=0}^{r-1}  \bigl( \integers (r,-r) + (i,i) \bigr) \ ;
$$
for a $\ourv d\in\integers^2$, let the {\em down cone at $\ourv d$} be
$$
{\rm DownCone}(\ourv d) \eqdef 
\{ \ourv x \in \integers^2 \ |\ \ourv x \le \ourv d\} \ ,
$$
and set
$$
{\rm LatCount}_r(\ourv d) \eqdef  
\bigl| L_r \cap {\rm DownCone}(\ourv d)\bigr| \ .
$$
Then for any $\ourv d\in\integers^2$, we have
\begin{equation}\label{eq_LatCount}
{\rm LatCount}_r(\ourv d) = {\rm GRRR}_r(\ourv d) + 1.
\end{equation} 
\end{theorem}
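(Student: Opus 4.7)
The plan is to prove \eqref{eq_LatCount} by reducing to a small number of cases via translation invariance and then counting lattice points directly, comparing with the closed-form expression for ${\rm GRRR}_r$ already established in Theorem~\ref{th_GRRR_formula}.

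First I would observe that both sides of \eqref{eq_LatCount} are invariant under translating $\ourv d$ by $(r,-r)$. For the right-hand side this was noted after the definition of ${\rm GRRR}_r$. For the left-hand side, since $L_r + (r,-r) = L_r$ by construction and ${\rm DownCone}(\ourv d + \ourv v) = {\rm DownCone}(\ourv d) + \ourv v$, we get
\[
|L_r \cap {\rm DownCone}(\ourv d + (r,-r))|
= |(L_r - (r,-r)) \cap {\rm DownCone}(\ourv d)|
= |L_r \cap {\rm DownCone}(\ourv d)|.
\]
Consequently it suffices to treat the case $0 \le d_2 \le r-1$, which is the same normalization under which Theorem~\ref{th_GRRR_formula} provides an explicit formula.

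Next I would handle the three cases of Theorem~\ref{th_GRRR_formula} by directly counting points of the form $(i+kr,\,i-kr)$ with $i\in\{0,\ldots,r-1\}$ and $k\in\integers$ that lie in ${\rm DownCone}(\ourv d)$. In Case~1 ($d_1 \le -1$), the inequality $i+kr \le d_1 \le -1$ forces $k \le -1$, whence $i - kr \ge r > r-1 \ge d_2$, contradicting the second cone inequality; so the count is $0 = {\rm GRRR}_r(\ourv d)+1$. In Case~2 ($0 \le d_1 \le r-1$), both $i+kr \le d_1 \le r-1$ and $i-kr \le d_2 \le r-1$ force $k=0$, leaving exactly those $(i,i)$ with $i \le \min(d_1,d_2)$, giving $\min(d_1,d_2)+1$ points. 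In Case~3 ($d_1 \ge r-1$), writing $d_1 = qr + s$ with $0 \le s \le r-1$, the constraint $i - kr \le d_2 \le r-1$ forces $k \ge 0$, and for each $k$ the count of admissible $i$ is $\min(r-1,\, d_1-kr)+1$ when this is non-negative. Summing over $k=0,1,\ldots,q$ yields $(d_2+1) + (q-1)r + (s+1) = d_1+d_2-r+2$, matching ${\rm GRRR}_r(\ourv d)+1$.

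I expect no conceptual obstacle here; everything reduces to the translation invariance argument plus three routine integer-counting calculations. The only mildly delicate step is the bookkeeping in Case~3, where one must carefully identify the range of $k$ giving a non-empty contribution and the boundary value $k=q$ that produces a ``short'' row of size $s+1$ rather than the full $r$; once that is pinned down, the telescoping of the sum is immediate. With all three cases matching Theorem~\ref{th_GRRR_formula}, the identity \eqref{eq_LatCount} follows.
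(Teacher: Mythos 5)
Your proposal is correct and follows essentially the same route as the paper's proof: reduce by $(r,-r)$-translation invariance to $0\le d_2\le r-1$, count the points of $L_r$ in the down cone case by case, and compare with the explicit formula of Theorem~\ref{th_GRRR_formula}. One small wording slip in Case~3: the per-$k$ count is $\min(r-1,\,d_1-kr,\,d_2+kr)+1$, and the constraint $i\le d_2+kr$ bites exactly at $k=0$ (giving $d_2+1$ rather than $r$); your final sum already uses $d_2+1$ there, so the computation stands.
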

\begin{proof}
Since the functions of $\ourv d$ in \eqref{eq_LatCount} are
invariant under adding $(r,-r)\integers$, it suffices to prove
\eqref{eq_LatCount} for $0\le d_2\le r-1$.  So assume this; we will
discuss a number of cases.

If $d_1<0$, then $\ourv d$ is not $\ge (i,i)$ for any $i\ge 0$, and neither
can $\ourv d+m(r,-r)$ for any $m\in\integers$, since if $m\le 0$ then
the first component of $\ourv d+m(r,-r)$ is negative, and otherwise the
second component is.  Hence ${\rm LatCount}_r(\ourv d)=0$ in this case,
and hence, using Theorem~\ref{th_GRRR_formula}, \eqref{eq_LatCount} holds
in this case.

Next consider the case where $0\le d_1\le r-1$, and further assume that
$d_2\le d_1$.  The same argument as in the previous paragraph shows
that if $\ourv d+m(r,-r)$ cannot be $\ge (i,i)$ for any $i\ge 0$ unless
$m=0$; if $m=0$ and $0\le d_2\le d_1\le r-1$, then $\ourv d\ge (i,i)$
iff $i\in[d_2]$.  Hence 
$$
{\rm LatCount}_r(\ourv d)=d_2+1=\min(d_1,d_2)+1.
$$
We similarly see that this formula holds if $d_1\le d_2$.  
Theorem~\ref{th_GRRR_formula} now shows that \eqref{eq_LatCount} holds
in this case.

Next consider the case where $r\le d_1 \le 2r-1$.  A similar argument
as above shows that if $\ourv d+m(r,-r)$ cannot be $\ge (i,i)$ for any
$i\ge 0$ unless $m=0$ or $m=-1$.  For $m=0$, we have that
$\ourv d\ge (i,i)$ for $i\in [d_2]$; for $m=1$ we have that
$\ourv d+(-r,r)\ge (i,i)$ iff $i\le [d_1-r]$.  It follows that
$$
{\rm LatCount}_r(\ourv d)= (d_2+1)+(d_1-r+1)=d_1+d_2-r+2
$$
in this case, and hence \eqref{eq_LatCount} holds in this case.

More generally, if $d_1\ge 2r$ and $a\ge 2$ is the unique integer for which
$ar \le d_1 \le (a+1)r-1$, then
$\ourv d+m(r,-r)\ge (i,i)$ requires $m=0,-1,\ldots,-a$, and the of such
$i$ with $i\in[r-1]$ is $d_2+1$ for $m=0$, $d_1-ar+1$ for $m=-a$,
and $r$ for $m=-1,\ldots,-(a-1)$.  Hence the number of such pairs $(m,i)$
is
$$
(d_2+1)+(d_1-ar+1)+(a-1)r = d_1+d_2-r+2,
$$
and again \eqref{eq_LatCount} holds.

Hence \eqref{eq_LatCount} holds for all $d_1$ if $0\le d_2\le r-1$, and
hence it holds for all $\ourv d$.
\end{proof}

Let us sketch another proof of Theorem~\ref{th_LatCount}
based on Theorem~\ref{th_GRRR_S_0}.
We may assume that $0\le d_2\le r-1$.
If $\ourv d$ is to the left of $S_0$, 
we see that there are no lattice points in the down cone
of $\ourv d$.
Similarly, if $\ourv d$ lies on $S_0$, then $(0,0)$
is the single element of $L_r$ in the down cone of $\ourv d$.
Now we argue that the theorem holds for any $\ourv d\in S_i$ for any $i$;
the base case $i=0$ is given above.

For the inductive argument, say that the theorem holds for all 
$\ourv d\in S_i$ for some $i\ge 0$, and let $\ourv d\in S_{i+1}$.
We may assume $d_1\ge d_2$, or else $0\le d_1<d_2\le r-1$ and we may
use symmetry to exchange $d_1$ and $d_2$ and reduce to this case.
Given that $d_2\le d_1$, then $d_2\ge 1$ and 
$\ourv d-(0,1)$ lies on $S_i$; if $d_1\le r-1$, then $\ourv d$ 
contains exactly one more lattice point in its down cone
than $\ourv d-(0,1)$, namely $(d_2,d_2)$.

%

\subsection{Conclusion of the Proof of Theorem~\ref{th_main_grrr_model}}

\begin{proof}[Proof of Theorem~\ref{th_main_grrr_model}.]
By Theorem~\ref{th_LatCount}
it suffices to show that
$$
b^0(\cM_{k,r,\ourv d})=
\dim_k H^0(\cM_{k,r,\ourv d})= {\rm LatCount}(\ourv d)
$$
Each global section $\gamma\in\Gamma(\cM_{k,r,\ourv d})$ has a value
$$
\gamma(B_3) = \bigl( f_1(v),\ldots,f_r(v) \bigr) \in 
\bigl( k[v,1/v] \bigr)^{\oplus r}
$$
which uniquely determines $\gamma$, since $\cM_{k,r,\ourv d}(A_i)$ are
isomorphic to $\cM_{k,r,\ourv d}(B_3)$ as vector spaces, and since the 
restriction maps $\cM_{k,r,\ourv d}(B_i)\to \cM_{k,r,\ourv d}(A_i)$ are injections.
So it remains to check which tuples $(f_1,\ldots,f_r)\in \cM_{k,r,\ourv d}(B_3)$
extend to a global section.  If for $i\in [r]$ we have
$$
f_i = \sum_n c_{i,n} v^n
$$
(so only finitely many of the $c_{i,n}$ are nonzero), then the restriction
maps map $(f_1,\ldots,f_r)$ to the values
$$
g_1(x_1)\eqdef \sum_{n,i} c_{i,n} x_1^{rn+i-1} \in \cM_{k,r,\ourv d}(A_1) ,\quad
g_2(x_2) \eqdef \sum_{n,i} c_{i,n} x_2^{-rn+i-1} \in \cM_{k,r,\ourv d}(A_2) ,
$$
and these restriction maps are isomorphisms;
so this extends to $B_1,B_2$ and hence to a global section iff
for $i=1,2$, $g_i(x_i)$ lies in the image of the restriction map
to $A_i$ from $B_i$; in other words, iff
\begin{equation}\label{eq_global_section_cond}
rn+i-1\le d_1, \quad\mbox{and}\quad -rn+i-1\le d_2
\end{equation}
whenever $c_{i,n}\ne 0$.  It follows that a basis for 
$\Gamma(\cM_{k,r\ourv d})$ is indexed by the pairs
$(i,n)\in \{1,\ldots,r\}\times\integers$ satisfying 
\eqref{eq_global_section_cond}, i.e., for which
\begin{equation}\label{eq_condition_with_L_r}
n(r,-r)+(i-1,i-1) \le \ourv d.
\end{equation} 
But the set of points $n(r,-r)+(i-1,i-1)$ with $i\in\{1,\ldots,r\}$
and $n\in\integers$ is precisely the set $L_r$;
hence the number of pairs $(i,n)$ satisfying 
\eqref{eq_condition_with_L_r}
is precisely ${\rm LatCount}_r(\ourv d)$.
\end{proof}

\section{Foundations, Part 2: Morphisms of Sheaves, Short/Long Sequences}
\label{se_simpleII}

In this section we develop definitions and notions needed to
prove our theorem regarding the
{\em Euler characteristic} of the sheaves $\cM_{k,r,\ourv d}$.

\subsection{First Betti Number}

\begin{definition}\label{de_cohomology}
Let $\cF$ be a sheaf of $k$-vector spaces on the category $\CTwoV$.
We define $H^0(\cF)$ and $H^1(\cF)$ to be, respectively, the kernel and
cokernel of the map
\begin{equation}\label{eq_cohomology}
u\from \bigoplus_{j=1}^3 \cF(B_j)
\to
\bigoplus_{i=1}^2 \cF(A_i)
\end{equation}
where $u$ is given by map
$$
u(m_1,m_2,m_3) = \bigl( \cF(A_1,B_1)m_1 - \cF(A_1,B_3)m_3,
\cF(A_2,B_2)m_2 - \cF(A_2,B_3)m_3 \bigr) \ .
$$
We define the {\em first Betti number} of $\cF$ to be
$$
b^1(\cF) = \dim_k H^1(\cF).
$$
\end{definition}
Note that there is a simple one-to-one correspondence between $H^0(\cF)$ 
and $\Gamma(\cF)$, and hence we define $b^0(\cF)$ as 
$$
b^0(\cF) = \dim_k H^0(\cF) = \Gamma(\cF).
$$
In Section~\ref{se_simpleIII} we will show that the $H^i(\cF)$ above
are the right derived functors of the functor $\cF\mapsto \Gamma(\cF)$.

\subsection{The Short/Long Exact Sequence}

The main technique we will use to prove our Euler characteristic formula
is a standard short/long exact sequence.

\begin{definition}
Let $\cF,\cG$ be sheaves of $k$-vector spaces on the category,
$\cC$, associated to the bipartite graph
$G=(\cA,\cB,E)$.  We define a {\em morphism} $\cF\to \cG$ to be a collection
$u=\{u_P\}_{P\in\Ob(\cC)}$ of $k$-linear maps
$u_P\from\cF(P)\to\cG(P)$ such that
for each $(a,b)\in E$ we have
$u_a\cF(a,b)=\cG(a,b)u_b$.
\end{definition}

This notion of morphism turns the class of sheaves of $k$-vector spaces
on a biparatite category, $\cC$, into a category.
We readily verify that this category is abelian, and all projective
and injective limits (e.g., products, kernels, cokernels) are
computed ``value-by-value;''
for example the cokernel of a morphism $u\from\cF\to\cG$ is simply
the sheaf whose value at $P\in\Ob(\cC)$ is $\cG(P)/\cF(P)$,
and whose restriction maps are obtained from those of $\cF$ and $\cG$
in the natural fashion.
For details, see [SGA4], Section~I.3; our notion of sheaf is
the notion of presheaf in [SGA4] (endowed with the
coarsest topology, i.e., {\em la topologie grossi\`ere}).

\begin{theorem}
If $0\to\cF_1\to\cF_2\to\cF_3\to 0$ is any
short exact sequence of sheaves, then we have a correspond long exact 
sequence of sheaves
$$
0\to
H^0(\cF_1)\to H^0(\cF_2) \to H^0(\cF_3) \to
H^1(\cF_1)\to H^1(\cF_2) \to H^1(\cF_3) \to
0.
$$
\end{theorem}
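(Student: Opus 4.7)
The plan is to prove this by a standard application of the snake lemma to a commutative diagram of $k$-vector spaces with exact rows, which will reduce the theorem to an essentially formal statement about morphisms of chain complexes of length two.

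First, I would observe that since short exactness of sheaves on a bipartite category is checked value-by-value (kernels and cokernels in this category are computed object-by-object), for each $P\in\Ob(\CTwoV)$ we get a short exact sequence of $k$-vector spaces
\begin{equation*}
0\to \cF_1(P)\to \cF_2(P)\to \cF_3(P)\to 0.
\end{equation*}
Taking direct sums over $P=B_1,B_2,B_3$ and over $P=A_1,A_2$ preserves exactness, giving two short exact sequences of $k$-vector spaces, namely the top and bottom rows of the diagram
\begin{equation*}
\begin{tikzcd}
0 \arrow[r] & \bigoplus_j \cF_1(B_j) \arrow[r] \arrow[d, "u_{\cF_1}"'] & \bigoplus_j \cF_2(B_j) \arrow[r] \arrow[d, "u_{\cF_2}"'] & \bigoplus_j \cF_3(B_j) \arrow[r] \arrow[d, "u_{\cF_3}"'] & 0 \\
0 \arrow[r] & \bigoplus_i \cF_1(A_i) \arrow[r] & \bigoplus_i \cF_2(A_i) \arrow[r] & \bigoplus_i \cF_3(A_i) \arrow[r] & 0
\end{tikzcd}
\end{equation*}
where $u_{\cF_\ell}$ is the map of Definition~\ref{de_cohomology}.

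Next I would verify that the two squares commute. This is where the hypothesis that the horizontal arrows are morphisms of sheaves is used: if $\phi\from\cF_\ell\to\cF_{\ell+1}$ is such a morphism, then by definition $\phi_{A_i}\circ\cF_\ell(A_i,B_j)=\cF_{\ell+1}(A_i,B_j)\circ\phi_{B_j}$ for each edge $(A_i,B_j)\in E$. Since $u_{\cF_\ell}$ is an alternating sum of such restriction maps (across the three edges of $\CTwoV$ landing in $A_1$, respectively $A_2$), the identities on edges combine to give $u_{\cF_{\ell+1}}\circ(\oplus\phi_{B_j})=(\oplus\phi_{A_i})\circ u_{\cF_\ell}$, which is precisely commutativity of each square.

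Finally I would invoke the snake lemma applied to this diagram. The snake lemma produces the six-term exact sequence
\begin{equation*}
0\to \ker u_{\cF_1}\to \ker u_{\cF_2}\to \ker u_{\cF_3}\to \coker u_{\cF_1}\to \coker u_{\cF_2}\to \coker u_{\cF_3}\to 0,
\end{equation*}
which by Definition~\ref{de_cohomology} is the claimed long exact sequence in $H^0$ and $H^1$. The only step requiring any care is the verification of the commutativity of the squares, and the main conceptual point is that our cohomology functors are already expressed, by construction, as the kernel and cokernel of a single morphism of two-term complexes, so no genuine derived-functor machinery is needed at this stage; the snake lemma suffices.
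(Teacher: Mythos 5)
Your proof is correct and is essentially the paper's own argument: the paper disposes of this theorem as a "standard diagram chasing argument" (with derived functors offered as a second proof later), and your explicit reduction—value-by-value exactness, commutativity of the squares from the definition of a sheaf morphism, then the snake lemma applied to the two-term complexes defining $H^0$ and $H^1$—is exactly that standard chase, carried out carefully.
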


The proof of this theorem is standard diagram chasing argument.
A second proof of this theorem is given in Section~\ref{se_simpleIII}:
the theorem is a standard result once we have proven that
the $H^i(\cF)$ are the right derived functors
of $\cF\to\Gamma(\cF)$.

\subsection{Restriction and Extension by Zero}

Later we will need some operations on sheaves; the ones we describe
here are {\em exact}, meaning they take short exact sequences to
short exact sequences

\begin{definition}\label{de_restriction_extension}
Let $G'=(\cA',\cB',E')$ be a subgraph of some 
bipartite graph, $G=(\cA,\cB,E)$, and let $\cC',\cC$ be the 
respective associated bipartite categories.
\begin{enumerate}
\item
If $\cF$ is a sheaf on $\cC$, then there is
an evident sheaf
{\em restriction of $\cF$ to $\cC'$}, denoted $\cF|_{G'}$,
which is the sheaf on $\cC'$ whose values and restrictions are taken
from $\cC$.
\item
If $\cF'$ is a sheaf on $\cC'$, we define the {\em extension by zero
of $\cF'$ to $\cC$} we mean the sheaf on $\cC$ whose values and
restrictions are taken from $\cF'$ for objects and morphisms in $\cC'$
and whose remaining values and restrictions are zero.
\item
If $\cF$ is a sheaf on $\cC$, we use $\cF_{\cC'}$ to denote the
sheaf on $\cC$ that is the extension by zero of the restriction of
$\cF$ to $\cC'$.
\end{enumerate}
\end{definition}
We easily see that the three above operations each define functors
that are {\em exact} in the sense that they preserve exactness of
short exact sequences (i.e., sequences $0\to\cF_1\to \cF_2\to \cF_3\to 0$).
Note that the above definition of $\cF_{\cC'}$
is consistent with the notation in Example~\ref{ex_restrict_extend_zero}.

\section{Some Betti Number Computations}
\label{se_some_Betti}

In this section we give a few simple
Betti number computations.
First, we give a procedure to compute $H^1(\cF)$ for various 
sheaves $\cF$, including all the $\cM_{k,r,\ourv d}$, the $\cO_{k,r}$, and
some {\em line bundles} $\cL_{k,r,\ourv d}$ that we define in this
section.
Second, we compute the Betti numbers of a simple type of
{\em skyscraper sheaf} that we will use in the next section.

\subsection{Partial-Line Bundles}

\begin{definition}\label{de_partial_line}
Let $k$ be a vector space and $\ourv d\in\integers^2$.  
By a {\em $\ourv d$-twisted partial-$k$-line bundle 
(on $\CTwoV$)}
(or simply a {\em partial-line bundle})
we mean a sheaf of $k$-vector spaces on $\CTwoV$ such that
\begin{enumerate}
\item for $i=1,2$ we have
$$
\cF(A_i) = k[x_i,1/x_i] ,\quad
\cF(B_i) = k[y_i]
$$
for intederminates $x_1,x_2,y_1,y_2$, and
\item for $i=1,2$ we have
$$
\cF(A_i,B_i) (p(y_i)) = x_i^{d_i} p(1/x_i).
$$
\item there is a basis $\{m_j\}_{j\in J}$ for $\cF(B_3)$ such that
there are injections $f,g$ from $J$ to $\integers$ such that
for each $j\in J$
$$
\cF(A_1,B_3)m_j = x_1^{f(j)}, \quad
\cF(A_2,B_3)m_j = x_2^{g(j)} \ .
$$
\end{enumerate}
\end{definition}
In terms to be given in Section~\ref{se_simpleIII}, such sheaves
$\cF$ are examples of {\em line bundles} (or {\em invertible sheaves}) when
restricted to $\{A_1,A_2,B_1,B_2\}$, and they have a particular form
on $B_3$ and the restriction maps from $B_3$ to the $A_i$.

The sheaves $\cO_{k,r}$ and $\cM_{k,r,\ourv d}$ are all partial-line
bundles.  Let us describe another set of sheaves that will be
of interest to us.

\begin{definition}\label{de_line_L}
Let $k$ be a field, $r\ge 1$ an integer, and $\ourv d\in\integers^2$.
We use $\cL=\cL_{k,r,\ourv d}$ to denote the partial-$k$-line bundle with
$\ourv d$ twists on $\CTwoV$ such that
$\cL=k[v,1/v]$ and 
$$
\cL(A_1,B_3)p(v)=p\bigl(x_1^r\bigr), \quad
\cL(A_2,B_3)p(v)=p\bigl(x_2^{-r}\bigr) 
$$
(setting $J=\integers$ and taking $\{v^j\}_{j\in J}$ as a basis for
$\cL(B_3)$, we see that $\cL$ is indeed a partial-line bundle).
\end{definition}

\subsection{The First Cohomology Formula for Many Cases of Interest}

In this subsection we describe a result that allows us to quickly
compute $H^1(\cF)$ (and hence $b^1(\cF)$) for $\cF$ being any of
$\cM_{k,r,\ourv d},\cO_{k,r}$
and some other sheaves of interest to us in this article.

\begin{lemma}\label{le_b1_comp}
Let $\cF$ be a 
partial $\ourv d $-twisted $\cO_{k,r}$-line bundle, and let
$J,f,g$ be as in Definition~\ref{de_partial_line}.
Then $H^1(\cF)$ (i.e., the cokernel of $u$ 
in \eqref{eq_cohomology}) has a basis consisting of coset representatives
of the following elements of $\cF(A_1)\oplus\cF(A_2)$:
\begin{equation}\label{eq_basis_cokernel}
\bigl\{x_1^i\bigr\}_{i\in I_1} \cup
\bigl\{x_2^i\bigr\}_{i\in I_2} \cup
\bigl\{x_1^{f(j)}\bigr\}_{j\in J_1}
\end{equation}
where
$$
I_1 \eqdef 
\bigl\{ i\in\integers \ |\ \mbox{$i>d_1$ and $i\notin \Image(f)$} \bigr\},
\quad
I_2 \eqdef 
\bigl\{ i\in\integers \ |\ \mbox{$i>d_2$ and $i\notin \Image(g)$} \bigr\},
$$
and
$$
J_1 \eqdef
\bigl\{ j\in J \ |\ \mbox{$f(j)>d_1$ and $g(j)>d_2$} \bigr\} \ .
$$
(For any $j\in J_1$, we could use $x_2^{g(j)}$ instead of $x_1^{f(j)}$
in the basis \eqref{eq_basis_cokernel}.)
\end{lemma}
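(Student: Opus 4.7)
My plan is to describe the image of the map $u$ in \eqref{eq_cohomology} very explicitly with respect to the monomial bases of $\cF(A_1)=k[x_1,1/x_1]$ and $\cF(A_2)=k[x_2,1/x_2]$, then compute the cokernel in two stages.

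First I would handle the contribution from $\cF(B_1)\oplus\cF(B_2)$. By the definition of a partial-line bundle, the image of $\cF(B_i)\to\cF(A_i)$ is exactly the $k$-span of $\{x_i^n\}_{n\le d_i}$. Thus the image of $u$ restricted to $\cF(B_1)\oplus\cF(B_2)$ equals the span of $\{(x_1^n,0)\}_{n\le d_1}\cup\{(0,x_2^n)\}_{n\le d_2}$, and the quotient of $\cF(A_1)\oplus\cF(A_2)$ by this subspace has as basis the classes of $\{(x_1^n,0)\}_{n>d_1}\cup\{(0,x_2^n)\}_{n>d_2}$.

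Next I would quotient this intermediate vector space by the image from $\cF(B_3)$, which is spanned by the elements $(-x_1^{f(j)},-x_2^{g(j)})$ for $j\in J$. For each $j\in J$ I would split into three cases depending on whether $f(j)\le d_1$ and whether $g(j)\le d_2$. When both inequalities hold, the element is already zero in the intermediate quotient and contributes nothing. When exactly one inequality fails, the element becomes of the form $(x_1^{f(j)},0)$ or $(0,x_2^{g(j)})$, which kills the corresponding basis vector. When both inequalities fail---that is, $j\in J_1$---the relation $(x_1^{f(j)},0)\equiv -(0,x_2^{g(j)})$ allows us to eliminate $(0,x_2^{g(j)})$ in favour of $(x_1^{f(j)},0)$ (or vice versa). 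Collecting what survives gives precisely the classes in \eqref{eq_basis_cokernel}: the $x_1^i$ for $i>d_1$ not in $\Image(f)$, the $x_2^i$ for $i>d_2$ not in $\Image(g)$, and one representative per $j\in J_1$.

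To finish, I need to show these surviving classes are linearly independent in $H^1(\cF)$. The cleanest way is to define a $k$-linear splitting: send each basis monomial of $\cF(A_1)\oplus\cF(A_2)$ to $0$ if it is killed by the procedure above, and to its designated surviving representative otherwise; the injectivity of $f$ and $g$ ensure that the assignments for $j\in J_1$ are consistent (each $x_1^{f(j)}$ and $x_2^{g(j)}$ is hit by a unique $j$). One then checks that this linear map vanishes on the image of $u$ (which reduces to checking it on the three types of generators above), hence descends to a retraction of $H^1(\cF)$ onto the span of \eqref{eq_basis_cokernel}.

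The main obstacle is purely bookkeeping: verifying that the case analysis genuinely partitions $J$ and that the injectivity of $f$ and $g$ prevents any tangling of identifications when two different $j,j'\in J_1$ happen to share a coordinate (they cannot, by injectivity). Once this is laid out, no nontrivial calculation remains.
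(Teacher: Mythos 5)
Your argument is correct, and its first half is essentially the paper's: you identify the image of $u$ in \eqref{eq_cohomology} as the sum of the three pieces coming from $\cF(B_1),\cF(B_2),\cF(B_3)$, read off the relations $x_1^i\sim 0$ for $i\le d_1$, $x_2^i\sim 0$ for $i\le d_2$, and $x_1^{f(j)}\sim -x_2^{g(j)}$ for $j\in J$, and conclude by the case split on $j$ that the classes in \eqref{eq_basis_cokernel} span the cokernel. Where you diverge is the linear-independence step: the paper finishes by writing a vanishing combination as an explicit element of the image of $u$ and (implicitly) comparing monomial coefficients, whereas you construct a $k$-linear retraction of $\cF(A_1)\oplus\cF(A_2)$ onto the span of the listed monomials that annihilates the image of $u$, with injectivity of $f$ and $g$ guaranteeing the assignment is well defined. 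Your splitting device is arguably cleaner and more self-contained (it simultaneously certifies independence and exhibits $H^1(\cF)$ as a direct summand description), at the cost of the bookkeeping needed to check the retraction kills all three families of generators; the paper's coefficient-comparison route requires no auxiliary map but leans on the same case analysis inside the verification. Either way the lemma is established, and your proof is complete as written.
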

\begin{proof}
A basis of $\cF(A_1)\oplus\cF(A_2)$
(see \eqref{eq_cohomology}) is given by all the $x_1^i$ and all the $x_2^i$
(with $i$ ranging over all $i\in\integers$);
the image of $u$ consists of the sum of the image of $u$ restricted
to $\cF(B_1)$, to $\cF(B_2)$, and to $\cF(B_3)$.
The image of $u$ restricted to $\cF(B_1)$ is $x_1^i$ with $i\le d_1$;
hence $x_1^i\sim 0$ for $i\le d_1$, where $\sim$ denotes equivalence
modulo the image of $u$.
Similarly for $\cF(B_2)$, so that $x_2^i\sim 0$ for $i\le d_2$.
Finally image of $u$ restricted to $\cF(B_3)$ implies that for each $j\in J$
$$
x_1^{f(j)}\sim - x_2^{g(j)}  \ .
$$
It follows that any element of $\cF(A_1)\oplus\cF(A_2)$ is equivalent to
an element of the form
\begin{equation}\label{eq_cokernel_reps}
\sum_{i\in I_1} \alpha_i x_1^i  \ + \  %
\sum_{i\in I_2} \beta_i x_2^i  \ + \  %
\sum_{j\in J_1} \gamma_j x_1^{f(j)}  
\end{equation} 
with $\alpha_i,\beta_i,\gamma_j$ in $k$.

To finish the proof it suffices to show that the vectors in
\eqref{eq_basis_cokernel} are linearly independent modulo the image of $u$.
In other words we must show that
\begin{equation}\label{eq_basis_independence}
\sum_{i\in I_1} \alpha_i x_1^i  \ + \  %
\sum_{i\in I_2} \beta_i x_2^i  \ + \  %
\sum_{j\in J_1} \gamma_j x_1^{f(j)} \sim 0
\end{equation} 
implies that
all the $\alpha_i,\beta_i,\gamma_j$ are equal to zero.  
So consider an equation of the form \eqref{eq_basis_independence};
the condition $\sim 0$ is equivalent to belonging to the image of $u$,
and therefore
$$
\sum_{i\in I_1} \alpha_i x_1^i  \ + \  %
\sum_{i\in I_2} \beta_i x_2^i  \ + \  %
\sum_{j\in J_1} \gamma_j x_1^{f(j)} 
=
\sum_{i\le d_1} \mu_i x_1^i \ + \  %
\sum_{i\le d_2} \mu_i x_2^i \ + \  %
\sum_{j\in J} x_1^{f(j)}+x_2^{g(j)}
$$
\end{proof}

\subsection{The First Betti Numbers 
of $\cM_{r,\ourv d},\cO_{k,r},\cL_{k,r,\ourv d}$}

Let us state some easy consequences of Lemma~\ref{le_b1_comp}.

\begin{corollary}\label{co_important_first_Betti}
Let $k$ be a field, $r\ge 1$ an integer, and $\ourv d\in\integers^2$.
Then 
\begin{enumerate}
\item if $d_1,d_2\ge 0$, then
$$
b^1(\cM_{k,r,\ourv d})=\max\bigl( 0, r-1-\max(d_1,d_2) \bigr) ;
$$
\item for any $r\ge 2$ we have that $H^1(\cL_{k,r,\ourv d})$
(and therefore $H^1(\cO_{k,r})$) is infinite dimensional.
\end{enumerate}
\end{corollary}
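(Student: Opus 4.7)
Both claims are essentially immediate applications of Lemma~\ref{le_b1_comp}, so the plan is to identify the indexing data $(J,f,g)$ for the partial-line bundle in each case and then run through the sets $I_1,I_2,J_1$.

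For part (1), I would start by observing that $\cM_{k,r,\ourv d}$ is a $\ourv d$-twisted partial-line bundle with $\cM(B_3)=k[v,1/v]^{\oplus r}$ admitting the basis $\{v^n e_j\}_{(j,n)\in J}$ where $J=\{1,\dots,r\}\times\integers$. From the restriction maps of Definition~\ref{de_model_grrr_two} (equivalently, from the $\cO_{k,r}$-module description $e_j\mapsto x_i^{j-1}$), one reads off
$$
f(j,n)=rn+j-1,\qquad g(j,n)=-rn+j-1.
$$
Since every integer is uniquely of the form $rn+(j-1)$ with $j\in\{1,\dots,r\}$ and $n\in\integers$, both $f$ and $g$ are injections onto all of $\integers$, so $I_1=I_2=\emptyset$ and $b^1(\cM_{k,r,\ourv d})=|J_1|$. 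I would then rewrite the pair of inequalities $f(j,n)>d_1$, $g(j,n)>d_2$ as
$$
j\ge 2+\max(d_1-rn,\ d_2+rn),
$$
so the number of admissible $j\in\{1,\dots,r\}$ for a given $n$ equals $\max\bigl(0,\ r-1-\max(d_1-rn,\ d_2+rn)\bigr)$.

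The main (and only) calculation is to check that when $d_1,d_2\ge 0$ only the term $n=0$ contributes: for $n\ge 1$, $\max(d_1-rn,d_2+rn)\ge d_2+rn\ge r$; for $n\le -1$, symmetrically $\max(d_1-rn,d_2+rn)\ge d_1-rn\ge r$; in both cases the count is zero. For $n=0$ the count is $\max(0,r-1-\max(d_1,d_2))$, which is the claimed formula.

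For part (2), I would apply the same lemma to $\cL=\cL_{k,r,\ourv d}$, where $\cL(B_3)=k[v,1/v]$ has basis $\{v^n\}_{n\in\integers}$ and the restriction maps give $f(n)=rn$, $g(n)=-rn$. Both are injections with image $r\integers$, so for $r\ge 2$ the set
$$
I_1=\{i\in\integers\ |\ i>d_1,\ i\notin r\integers\}
$$
is infinite, and hence $b^1(\cL_{k,r,\ourv d})=\infty$ by Lemma~\ref{le_b1_comp}. To conclude the statement about $\cO_{k,r}$, I would just verify from Definition~\ref{de_cO_r} and Definition~\ref{de_line_L} that $\cO_{k,r}$ is literally $\cL_{k,r,(0,0)}$: the values agree object-by-object, and the formulas $y_i\mapsto 1/x_i$, $v\mapsto x_1^r$, $v\mapsto x_2^{-r}$ match the partial-line bundle recipe with $\ourv d=(0,0)$. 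Thus $H^1(\cO_{k,r})$ is infinite-dimensional as well. I do not anticipate any real obstacle; the only mildly delicate step is the case analysis on $n$ in part (1), ensuring that the hypothesis $d_1,d_2\ge 0$ kills every term with $n\ne 0$.
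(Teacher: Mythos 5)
Your proposal is correct and follows essentially the same route as the paper: identify $(J,f,g)$ for $\cM_{k,r,\ourv d}$ (namely $J=\{1,\dots,r\}\times\integers$, $f(j,n)=rn+j-1$, $g(j,n)=-rn+j-1$, both bijections) so that $b^1=|J_1|$, observe that $d_1,d_2\ge 0$ forces $n=0$ and count, and for part (2) note that for $\cL_{k,r,\ourv d}$ the image of $f$ is $r\integers$, so $I_1$ is infinite when $r\ge 2$, with $\cO_{k,r}=\cL_{k,r,\ourv 0}$ handling the parenthetical claim. No gaps to report.
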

\begin{proof}
For the first claim, i.e., for $\cM_{k,r,\ourv d}$, we have that
$J=\{1,\ldots,r\}\times\integers$, and $f(i,n)=i-1+rn$, $g(i,n)=i-1-rn$.
It follows that (1) $f,g$ are bijections, (2) $f(i,0)=g(i,0)=i-1$,
(3) for $n\ge 1$, $f(i,n)>0$ and $g(i,n)\le 0$, and 
(4) for $n\le -1$, $f(i,n)\le 0$ and $g(i,n)>0$.
In particular, $f(i,n)>d_1\ge 0$ and $g(i,n)>d_2\ge 0$ can only occur
for $n=0$ and $1\le i\le r$.  Furthermore the number of $i$ between $1$
and $r$ for which $i-1=f(i,0)>d_1$ and $i-1=g(i,0)>d_2$ is precisely
the number of integers $i$ with $2+\max(d_1,d_2)\le i\le r$, 
of which there are 
$$
\max\bigl( 0, r-1-\max(d_1,d_2) \bigr).
$$

For the second claim, if $r\ge 2$, then we get the same functions $f,g$
as before, but they are restricted to $J'=\{1\}\times\integers$ instead
of $J=\{1,\ldots,r\}\times\integers$.  In particular, the image of
$f$ does not include $f(r,n)=r-1+rn$ for any value of $n$, i.e., does
not include any number congruent to $r-1$ modulo $r$.  Since there are
infinitely many such values that are greater than $d_1$, this means that
the subset $I_1$ of Lemma~\ref{le_b1_comp} is infinite.
\end{proof}
The first claim in Corollary~\ref{co_important_first_Betti} proves that
$b^1(\cM_{k,r,K_r})=1$, where $K_r=(r-2,r-2)$ is the {\em canonical
divisor} of Baker-Norine.
The second claim in the corollary indicates that one cannot expect 
a simple duality theory based on $\cO_{k,r}$ or a {\em line bundle} such
as $\cL_{k,r,\ourv d}$ for any value of $\ourv d$.

\subsection{The Betti Numbers of Certain Skyscrapers}

\begin{definition}
Let $k$ be a field.  By the {\em skyscraper at $B_1$ with value $k$},
${\rm Sky}(B_1,k)$  we mean
the sheaf of $k$-vector spaces whose only nonzero value is at $B_1$, with
value $k$ (all restriction maps are therefore zero).  We similarly define
the same with $B_1$ replaced with $B_2$.
\end{definition}
In Section~\ref{se_simpleIII} we will explain this terminology.

\begin{lemma}\label{le_sky_Betti}
We have that for $i=1,2$,
$$
b^0 \bigl(  {\rm Sky}(B_i,k) \bigr) = 1, \quad
b^1 \bigl(  {\rm Sky}(B_i,k) \bigr) = 0.
$$
\end{lemma}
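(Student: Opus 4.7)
The plan is to apply Definition~\ref{de_cohomology} directly: $H^0(\cF)$ and $H^1(\cF)$ are, respectively, the kernel and cokernel of the map
$$
u\from \cF(B_1)\oplus\cF(B_2)\oplus\cF(B_3) \to \cF(A_1)\oplus \cF(A_2)
$$
built from the restriction maps of $\cF$. For $\cF=\Sky(B_i,k)$, every value other than $\cF(B_i)=k$ is zero by definition of the skyscraper, so the map $u$ collapses to the zero map $k\to 0$. Its kernel is $k$ (giving $b^0=1$) and its cokernel is $0$ (giving $b^1=0$).

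The argument is identical in the two cases $i=1$ and $i=2$, so I would state it once and note that it applies verbatim to both. No case analysis, no short exact sequence, and no extension/restriction functoriality is needed; the entire proof is a one-line unwinding of the definitions of $H^0$ and $H^1$ given in Definition~\ref{de_cohomology} together with the definition of $\Sky(B_i,k)$. The only thing worth flagging for the reader is consistency with the zeroth Betti number computed via global sections: the unique (up to scalar) global section assigns the given element of $k$ at $B_i$ and $0$ elsewhere, which trivially satisfies the compatibility condition since both $\cF(A_1,B_i)$ and $\cF(A_2,B_i)$ map into the zero vector space.

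There is no real obstacle here; the lemma exists mainly to record these Betti numbers in a convenient form for use in the Euler-characteristic computation that follows in Section~\ref{se_rr} via the short exact sequence \eqref{eq_typical_short_exact}.
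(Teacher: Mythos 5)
Your proposal is correct and follows the same route as the paper: both simply unwind Definition~\ref{de_cohomology} and observe that for $\Sky(B_i,k)$ the map $u$ of \eqref{eq_cohomology} reduces to the zero map $k\to 0$, so the kernel is $k$ and the cokernel is $0$. The extra consistency remark about global sections is fine but not needed.
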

\begin{proof}
This is immediate from the definition of the cohomology groups
\eqref{eq_cohomology}, since the map $u$ there,
$$
u\from \bigoplus_{j=1}^3 \cF(B_j)
\to
\bigoplus_{i=1}^2 \cF(A_i)
$$
amounts to $u\from k \to 0$.
\end{proof}

\ignore{
----------------------------------------------------------

{\tiny\red
Result: for general $\cF(B_3)$,
$$
H^1(\cF) = {\rm span}_{i\in\integers}(x_1^i,x_2^i) / \sim,
$$
where
$$
x_1^i\sim 0 \quad \mbox{if $i\le d_1$},
$$
$$
x_2^i\sim 0 \quad \mbox{if $i\le d_2$},
$$
and for any $m\in\cF(B_3)$,
$$
\cF(\phi_{13})m \sim - \cF(\phi_{23}) m.
$$
This shows that $H^1(\cO)$ is infinite dimensional, and we really need
$r$ copies of $k[v,1/v]$ if we have this to be finite dimensional,
and for $\cF=\cM_K$, with $K=(r-2,r-2)$ there is a natural isomorphism
$$
H^1(\cM_K) \isom {\rm span}_k(x_1^{r-1},x_2^{r-1}) / (x_1^{r-1}+x_2^{r-1}=0).
$$
This seems to be trying to act like a residue map. (???)

In more detail, the preimage of $x_1^{r-1}$ in $k[y_1]$ doesn't exist, but
would be $1/y_1$ if $1/y_1$ were to exist in $k[y_1]$; similarly for 
$x_1,y_1$ replaced with $x_2,y_2$, and the ``glue'' in $\cM_{\ourv d}$
between these two is given by the fact that $e_r$ maps to 
$(x_1^{r-1},x_2^{r-1})$.  Maybe we will be able to explain this better
in the future...
}

}

\section{The Euler Characteristic and Riemann-Roch Theorem}
\label{se_rr}

Here we calculate the Euler characteristic of the sheaves $\cM_{k,r,\ourv d}$,
whereupon the Baker-Norine graph Riemann-Roch Theorem becomes
equivalent to the formula
$$
b_1(\cM_{k,r,\ourv d}) = b_0(\cM_{k,r,K_r-\ourv d}).
$$
Our Euler characteristic calculation is simplified by the use of short/long
exact sequences.

\subsection{Euler Characteristic Computations}

\begin{definition}
Let $\cF$ be a sheaf of $k$-vector spaces on a simple category $\cC$
such that $b_i(\cF)$ are finite for $i=0,1$.  We define
$$
\chi(\cF) \eqdef b_0(\cF) - b_1(\cF)
$$
\end{definition}

\begin{theorem}\label{th_Euler_char}
For an integer $r\ge 1$, field $k$, and $\ourv d\in\integers^2$, let
$\cM_{r;\ourv d}$ be as Definition~\ref{de_model_grrr_two}.
Then
\begin{equation}\label{eq_Euler_char}
\chi(\cM_{r;\ourv d}) = d_1 + d_2 - (r-2).
\end{equation}
\end{theorem}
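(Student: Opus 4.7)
The plan is to reduce to a single base case $\ourv d = (0,0)$ by exhibiting short exact sequences that relate $\cM_{r,\ourv d}$ to $\cM_{r,\ourv d+(1,0)}$ and $\cM_{r,\ourv d+(0,1)}$ via skyscraper quotients, exactly as suggested in Section~\ref{se_overview}. Concretely, I would first verify the base case: from Theorem~\ref{th_main_grrr_model} together with the formula of Theorem~\ref{th_GRRR_formula} we get $b^0(\cM_{r,\ourv 0}) = 0 + 1 = 1$, and from the first claim of Corollary~\ref{co_important_first_Betti} we get $b^1(\cM_{r,\ourv 0}) = r-1$, hence $\chi(\cM_{r,\ourv 0}) = 1-(r-1) = -(r-2)$, matching \eqref{eq_Euler_char} at $\ourv d = \ourv 0$.

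Next I would construct, for each $\ourv d \in \integers^2$, an explicit short exact sequence
\[
0 \to \cM_{r,\ourv d} \xrightarrow{\iota} \cM_{r,\ourv d+(1,0)} \to \Sky(B_1,k) \to 0,
\]
where $\iota$ is the identity at every object except $B_1$, and is the map $p(y_1)\mapsto y_1\, p(y_1)$ at $B_1$. The compatibility check with restrictions is immediate since $x_1^{d_1+1}(1/x_1) = x_1^{d_1}$, so the twist exponent absorbs the extra factor of $y_1$; at $B_2$ and $B_3$ nothing changes, and at the $A_i$ both maps are the identity. The cokernel is $k[y_1]/y_1k[y_1] \cong k$ at $B_1$ and zero elsewhere, which is precisely $\Sky(B_1,k)$. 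By the analogous construction with $y_2$ we get the corresponding sequence with $B_2$ and shift $(0,1)$. Lemma~\ref{le_sky_Betti} gives $\chi(\Sky(B_i,k)) = 1$, so the long exact sequence yields
\[
\chi(\cM_{r,\ourv d + (1,0)}) = \chi(\cM_{r,\ourv d}) + 1
\]
(and similarly for $(0,1)$), provided the Euler characteristics are defined.

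The finiteness of the Betti numbers is the mild obstacle: $\chi$ is only defined when both $b^0$ and $b^1$ are finite. I would argue by induction using the long exact sequence itself: if two of the three terms in a short exact sequence have finite-dimensional cohomology, then so does the third, because each $H^i$ of the middle term sits as an extension of a subspace of a finite space by a quotient of a finite space. Starting from the base case, where $\cM_{r,\ourv 0}$ has finite Betti numbers, the induction propagates finiteness to every $\cM_{r,\ourv d}$ with $\ourv d \in \integers_{\ge 0}^2$; to reach $\ourv d$ with negative components we invert the roles, reading the same short exact sequence backwards to transfer finiteness from $\cM_{r,\ourv d+(1,0)}$ back to $\cM_{r,\ourv d}$. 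Once finiteness is secured globally, iterating the additive relation from $\ourv 0$ in the two coordinate directions gives
\[
\chi(\cM_{r,\ourv d}) = \chi(\cM_{r,\ourv 0}) + d_1 + d_2 = -(r-2) + d_1 + d_2,
\]
which is \eqref{eq_Euler_char}.
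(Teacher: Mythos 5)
Your proposal is correct and follows essentially the same route as the paper: verify the base case $\ourv d=\ourv 0$ using the $b^0$ and $b^1$ computations already established, then use the short exact sequence $0\to\cM_{r,\ourv d}\to\cM_{r,\ourv d+(1,0)}\to\Sky(B_1,k)\to 0$ (and its $(0,1)$/$B_2$ analogue), the skyscraper Betti numbers, and the long exact sequence to get $\chi(\cM_{r,\ourv d+(1,0)})=\chi(\cM_{r,\ourv d})+1$ together with propagation of finiteness in both directions. Your explicit handling of the finiteness bookkeeping matches what the paper does implicitly, so there is nothing to add.
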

\begin{proof}
For $\ourv d=(0,0)$, we have 
$$
\chi(\cM_{r,\ourv 0}) = b_0(\cM_{r,\ourv 0})-
b_1(\cM_{r,\ourv 0}) = 1 - (r-1) = 2-r,
$$
which verifies \eqref{eq_Euler_char} in this case.

We claim that if \eqref{eq_Euler_char} holds for some $\ourv d$, then 
it also holds for $\ourv d + (1,0)$.  To see this, let us describe
a morphism $\mu$ giving rise to
an exact sequence of sheaves of $k$-vector spaces
\begin{equation}\label{eq_basic_exact}
0 \to \cM_{\ourv d} \xrightarrow{\mu}
 \cM_{\ourv d+(1,0)} \to \Sky(B_1,k) \to 0  \ :
\end{equation}
we simply take $\mu$ to be the identity map on $A_1,A_2,B_2,B_3$,
and let $\mu(B_1)$ be the map $1\mapsto y_1$ (this choice is
forced when $\mu(A_1)$ is the identity map, given the restrictions of
$\cM_{\ourv d},\cM_{\ourv d+(1,0)}$ along $A_1\to B_1$.
Then $\mu$ is surjective everywhere except at $B_1$, where it is the
map $k[y_1]\to k[y_1]$ of $k[y_1]$ modules taking $1$ to $y_1$, whose
cokernel is
$$
k[y_1] / (y_1 \, k[y_1]) \isom k.
$$
Hence the cokernel of $\mu$ is just the sheaf supported on $B_1$ 
(therefore a skyscraper sheaf at $B_1$) whose
value there is $k$.

The exact sequence \eqref{eq_basic_exact} is depicted in 
Figure~\ref{fi_EulerShortExact}.
\begin{figure}[h]  \label{fi_EulerShortExact}\centering
\EulerShortExact
\caption{The maps $\rho_{i, d_i}$, $i = 1,2$,
  send $p(y_i)$ to $x_i^{d_i}p(x_i^{-1})$.
Sheaf morphisms are indicated in blue arrows (the interesting arrows)
and red arrows (which are idenentity or zero maps).}
\end{figure}

In view of Lemma~\ref{le_sky_Betti} and
the short exact sequence \eqref{eq_basic_exact}, the associated
long exact sequence 
shows that
if one of $\cM_{\ourv d},\cM_{\ourv d+(0,1)}$ has finite Betti numbers, 
then the other one does and
$$
\chi\bigl( \cM_{\ourv d+(1,0)}\bigr)  
= \chi\bigl( \cM_{\ourv d}\bigr)  + \chi\bigl( \Sky(B_1,k) \bigr) 
= \chi\bigl( \cM_{\ourv d}\bigr)  + 1 \ .
$$

It follows that if \eqref{eq_Euler_char} holds for one of $\ourv d$ or $\ourv d+(1,0)$, 
then it holds for the other one as well.
It follows similarly for $\ourv d$ and $\ourv d+(0,1)$.
Hence if it holds for one value of $\ourv d\in\integers^2$, then it holds
for all elements of $\integers^2$.
Since we have verified \eqref{eq_Euler_char} for $\ourv d=(0,0)$,
it holds for all $\ourv d$.
\end{proof}

\subsection{Consequences of the Euler Characteristic Formula}

Let us gather some consequences of Theorem~\ref{th_Euler_char},
using results from Sections~\ref{se_models_grr} and \ref{se_some_Betti}.
The first consequence is a weaker form of the Riemann-Roch theorem for
graphs.

\begin{theorem}\label{th_weak_rr}
Let $k$ be a field, and $r\ge 1$ an integer.  Then
\begin{enumerate}
\item
for all $\ourv d=(d_1,d_2)$ with $d_1+d_2<0$ we have
\begin{equation}\label{eq_ourvd_small}
b^0\bigl( \cM_{k,r,\ourv d}\bigr)=0, \quad
b^1\bigl( \cM_{k,r,\ourv d}\bigr)=2(r-2)-d_1-d_2 \ ;
\end{equation} 
and
\item 
for all $\ourv d=(d_1,d_2)$ with $d_1+d_2\ge 2(r-2)$ we have
\begin{equation}\label{eq_ourvd_large}
b^1\bigl( \cM_{k,r,\ourv d}\bigr)=0, \quad
b^0\bigl( \cM_{k,r,\ourv d}\bigr)=d_1+d_2 - 2(r-2) \ .
\end{equation}
\end{enumerate}
\end{theorem}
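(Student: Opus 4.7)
The plan is to leverage the Euler characteristic formula $\chi(\cM_{k,r,\ourv d}) = d_1+d_2-(r-2)$ from Theorem~\ref{th_Euler_char} as a bridge between the two Betti numbers: once one of $b^0(\cM_{k,r,\ourv d})$ and $b^1(\cM_{k,r,\ourv d})$ is known to vanish, the other is determined by the Euler characteristic relation $b^0-b^1 = d_1+d_2-(r-2)$. So for each of the two claims I need only pin down a single vanishing directly, and the remaining Betti number follows formally.

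For the first claim, I would verify $b^0(\cM_{k,r,\ourv d}) = 0$ whenever $d_1+d_2<0$. By Theorem~\ref{th_main_grrr_model} this is equivalent to ${\rm GRRR}_r(\ourv d) = -1$, and the cleanest route is via Theorem~\ref{th_LatCount}: every element of the lattice $L_r$ has the form $(i-1+rn,\,i-1-rn)$ for $i\in\{1,\ldots,r\}$ and $n\in\integers$, and hence its coordinate sum equals $2(i-1)\geq 0$. When $d_1+d_2<0$, no such point can lie in the down-cone at $\ourv d$, so ${\rm LatCount}_r(\ourv d)=0$ and accordingly $b^0=0$. The Euler characteristic formula then yields the claimed value of $b^1$.

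For the second claim, I would first observe that both $b^0$ and $b^1$ are invariants of the $(r,-r)$-equivalence class of $\ourv d$: $b^0$ is class-invariant by Theorem~\ref{th_main_grrr_model} (since the Baker--Norine rank is), $\chi$ is class-invariant because $d_1+d_2$ is, and consequently $b^1 = b^0-\chi$ is as well. After reducing to $0\leq d_2\leq r-1$, the large-degree hypothesis on $d_1+d_2$ forces us into case (3) of Theorem~\ref{th_GRRR_formula}, giving ${\rm GRRR}_r(\ourv d) = d_1+d_2-r+1$ and hence $b^0 = d_1+d_2-(r-2)$; Euler characteristic then gives $b^1=0$. An alternative route is to invoke Lemma~\ref{le_b1_comp} directly and check that the index sets $I_1$, $I_2$, $J_1$ produced by that lemma for $\cM_{k,r,\ourv d}$ are all empty in this regime, using that $f(i,n)=i-1+rn$ and $g(i,n)=i-1-rn$ are bijections $\{1,\ldots,r\}\times\integers\to\integers$.

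The main obstacle is not any single calculation but the bookkeeping around the equivalence-class reduction: the sheaves $\cM_{k,r,\ourv d}$ and $\cM_{k,r,\ourv d+(r,-r)}$ are \emph{not} isomorphic, so the equality of their Betti numbers has to be deduced indirectly, through the modeling theorem and the Euler characteristic, rather than via an explicit sheaf isomorphism. Once that indirect class-invariance is recorded, both claims reduce to a short calculation feeding into Theorem~\ref{th_Euler_char}, and the two parts of the theorem appear as the two extremal regimes of the relation $b^0 - b^1 = d_1 + d_2 - (r-2)$.
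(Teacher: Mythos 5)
Your proposal follows essentially the same route as the paper's proof: in each regime you pin down $b^0$ via the Baker--Norine rank (Theorem~\ref{th_main_grrr_model}, with Theorem~\ref{th_LatCount} or Theorem~\ref{th_GRRR_formula} supplying the rank), and then let the Euler characteristic formula of Theorem~\ref{th_Euler_char} determine the remaining Betti number; the paper does exactly this, handling part (1) by noting that adding multiples of $(r,-r)$ preserves $d_1+d_2$, so a divisor of negative degree is never equivalent to an effective one, and handling part (2) by reducing the rank computation to the strip $0\le d_2\le r-1$.

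Two caveats, the first shared with the paper's own write-up. First, what this argument actually yields is $b^1=(r-2)-d_1-d_2$ in part (1) and $b^0=d_1+d_2-(r-2)$ in part (2) (the latter is what you yourself write); these are \emph{not} the displayed values $2(r-2)-d_1-d_2$ and $d_1+d_2-2(r-2)$, which are inconsistent with $\chi=d_1+d_2-(r-2)$, so your phrase ``yields the claimed value of $b^1$'' papers over a discrepancy in the statement's constants. Second, in part (2) your claim that the hypothesis forces case (3) of Theorem~\ref{th_GRRR_formula} is wrong at the boundary: after reducing to $0\le d_2\le r-1$, the bound $d_1+d_2\ge 2(r-2)$ only gives $d_1\ge r-3$, not $d_1\ge r-1$. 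Indeed for $\ourv d=K_r=(r-2,r-2)$ one has $d_1+d_2=2(r-2)$ but $b^1(\cM_{k,r,K_r})=1$ (see the remark following Corollary~\ref{co_important_first_Betti}), so the vanishing of $b^1$ genuinely fails there and the stated hypothesis cannot stand as written. The paper's proof silently works under the stronger hypothesis $d_1+d_2\ge 2r-2$, where the reduction does land in case (3); your argument needs the same strengthening (or the hypothesis $d_1+d_2>2(r-2)$ together with the extra sub-case $d_2=r-1$, $d_1=r-2$ handled via case (2)). Your alternative route through Lemma~\ref{le_b1_comp}, computing $I_1,I_2,J_1$ directly for $\cM_{k,r,\ourv d}$, is a genuinely different and workable way to get the $b^1$ statement, but it runs into the same boundary issue and so does not rescue the statement as printed.
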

\begin{proof}
The case where $d_1+d_2<0$ follows from 
Theorem~\ref{th_main_grrr_model}
and the fact that 
If $d_1+d_2<0$, $\ourv d$ cannot be equivalent to an
effective divisor (since adding any multiple of $(r,-r)$ to $\ourv d$
does not change the sum of the first and second components of the vector).
Hence ${\rm GRRR}_r(\ourv d)=-1$, and by Theorem~\ref{th_main_grrr_model}
we conclude the first equality in \eqref{eq_ourvd_small}.
The second equality in \eqref{eq_ourvd_small} then follows from the
first equality and Theorem~\ref{eq_Euler_char}.

Similarly, by Theorem~\ref{th_GRRR_formula} parts~(2) and (3), 
if $0\le d_2\le r-1$, then ${\rm GRRR}_r(\ourv d)=d_1+d_2-r+1$
if $d_1\ge r-1$; and $d_1\ge r-1$ is certainly true if $d_1+d_2\ge 2r-2$,
since $d_2\le r-1$.
Hence ${\rm GRRR}_r(\ourv d)=d_1+d_2-r+1$ whenever $d_1+d_2\ge 2r-2$
and $0\le d_2\le r-1$; but any vector $\ourv d$ can be brought to
a vector with $0\le d_2\le r-1$ by adding the appropriate multiple
of $(r,-r)$, and this multiple does not change the sum of the first and
second components.

Hence whenever $d_1+d_2\ge 2r-2$ we have that 
${\rm GRRR}_r(\ourv d)=d_1+d_2-r+1$, and this implies the first equality
of \eqref{eq_ourvd_large}, by Theorem~\ref{th_main_grrr_model}.
The second equality in \eqref{eq_ourvd_large} then follows from the
first equality and Theorem~\ref{eq_Euler_char}.
\end{proof}

\section{Foundations, Part 3: $\cO$-modules and Ext groups}
\label{se_simpleIII}

%
%
%

\newcommand{\NamePlusDiag}[2]{  	
\begin{array}{|c|}
\hline  {#1} \\  \hline {#2} \\ \hline
\end{array} 
}

\newcommand{\CoSkyBOneEdits}{

\begin{tikzpicture}
\node at (1.5,2)(v11){\footnotesize$M$};
\node at (1.5,1)(v12){$0$};
\node at (1.5,0)(v13){$0$};
\node at (-.5,1.5)(v14){\footnotesize$M\otimes_{R_1}S_1$};
\node at (-.5,0.5)(v15){$0$};

\draw[->] (v11) -- (v14);
\draw[->] (v12) -- (v14);
\draw[->] (v12) -- (v15);
\draw[->] (v13) -- (v15);

\end{tikzpicture}

}

\newcommand{\CoSkyBThreeEdits}{

\begin{tikzpicture}
\node at (1.5,2)(v11){$0$};
\node at (1.5,1)(v12){\footnotesize$M$};
\node at (1.5,0)(v13){$0$};
\node at (-.5,1.5)(v14){\footnotesize$M\otimes_{R_1}S_1$};
\node at (-.5,0.5)(v15){\footnotesize$M\otimes_{R_2}S_2$};

\draw[->] (v11) -- (v14);
\draw[->] (v12) -- (v14);
\draw[->] (v12) -- (v15);
\draw[->] (v13) -- (v15);

\end{tikzpicture}

}

\newcommand{\CoSkyBTwoEdits}{

\begin{tikzpicture}
\node at (1.5,2)(v11){$0$};
\node at (1.5,1)(v12){$0$};
\node at (1.5,0)(v13){\footnotesize$M$};
\node at (-.5,1.5)(v14){$0$};
\node at (-.5,0.5)(v15){\footnotesize$M\otimes_{R_2}S_2$};

\draw[->] (v11) -- (v14);
\draw[->] (v12) -- (v14);
\draw[->] (v12) -- (v15);
\draw[->] (v13) -- (v15);

\end{tikzpicture}

}

\newcommand{\CoSkyAOneEdits}{

\begin{tikzpicture}
\node at (1.5,2)(v11){$0$};
\node at (1.5,1)(v12){$0$};
\node at (1.5,0)(v13){$0$};
\node at (-.5,1.5)(v14){\footnotesize$N$};
\node at (-.5,0.5)(v15){$0$};

\draw[->] (v11) -- (v14);
\draw[->] (v12) -- (v14);
\draw[->] (v12) -- (v15);
\draw[->] (v13) -- (v15);

\end{tikzpicture}

}

\newcommand{\CoSkyATwoEdits}{

\begin{tikzpicture}
\node at (1.5,2)(v11){$0$};
\node at (1.5,1)(v12){$0$};
\node at (1.5,0)(v13){$0$};
\node at (-.5,1.5)(v14){$0$};
\node at (-.5,0.5)(v15){\footnotesize$N$};

\draw[->] (v11) -- (v14);
\draw[->] (v12) -- (v14);
\draw[->] (v12) -- (v15);
\draw[->] (v13) -- (v15);

\end{tikzpicture}

}
\newcommand{\SkyBOneEdits}{

\begin{tikzpicture}
\node at (1.5,2)(v11){\footnotesize$M$};
\node at (1.5,1)(v12){$0$};
\node at (1.5,0)(v13){$0$};
\node at (-.5,1.5)(v14){$0$};
\node at (-.5,0.5)(v15){$0$};

\draw[->] (v11) -- (v14);
\draw[->] (v12) -- (v14);
\draw[->] (v12) -- (v15);
\draw[->] (v13) -- (v15);

\end{tikzpicture}

}

\newcommand{\SkyBThreeEdits}{

\begin{tikzpicture}
\node at (1.5,2)(v11){$0$};
\node at (1.5,1)(v12){\footnotesize$M$};
\node at (1.5,0)(v13){$0$};
\node at (-.5,1.5)(v14){$0$};
\node at (-.5,0.5)(v15){$0$};

\draw[->] (v11) -- (v14);
\draw[->] (v12) -- (v14);
\draw[->] (v12) -- (v15);
\draw[->] (v13) -- (v15);

\end{tikzpicture}

}

\newcommand{\SkyBTwoEdits}{

\begin{tikzpicture}
\node at (1.5,2)(v11){$0$};
\node at (1.5,1)(v12){$0$};
\node at (1.5,0)(v13){\footnotesize$M$};
\node at (-.5,1.5)(v14){$0$};
\node at (-.5,0.5)(v15){$0$};

\draw[->] (v11) -- (v14);
\draw[->] (v12) -- (v14);
\draw[->] (v12) -- (v15);
\draw[->] (v13) -- (v15);

\end{tikzpicture}

}

\newcommand{\SkyAOneEdits}{

\begin{tikzpicture}
\node at (1.5,2)(v11){\footnotesize$N$};
\node at (1.5,1)(v12){\footnotesize$N$};
\node at (1.5,0)(v13){$0$};
\node at (-.5,1.5)(v14){\footnotesize$N$};
\node at (-.5,0.5)(v15){$0$};

\draw[->] (v11) -- (v14);
\draw[->] (v12) -- (v14);
\draw[->] (v12) -- (v15);
\draw[->] (v13) -- (v15);

\end{tikzpicture}

}

\newcommand{\SkyATwoEdits}{

\begin{tikzpicture}
\node at (1.5,2)(v11){$0$};
\node at (1.5,1)(v12){\footnotesize$N$};
\node at (1.5,0)(v13){\footnotesize$N$};
\node at (-.5,1.5)(v14){$0$};
\node at (-.5,0.5)(v15){\footnotesize$N$};

\draw[->] (v11) -- (v14);
\draw[->] (v12) -- (v14);
\draw[->] (v12) -- (v15);
\draw[->] (v13) -- (v15);

\end{tikzpicture}

}

%
%
%
%
%
%
%

In this section we discuss the notion of a $\cO$-module for a sheaf of
rings, $\cO$, on a category, $\cC$.  
In specific computations we often assume that $\cC$ is a bipartite
category, or even just $\CTwoV$; however, it is often conceptually
simpler to work with a general category, and working as such makes
it easier to cite the literature.

\subsection{$\cO$-Modules}

Just after Definition~\ref{de_cO_r} we explained that the
sheaves $\cO_{k,r}$ can be viewed a {\em sheaves of $k$-algebras},
and that the $\cM_{k,r,\ourv d}$ are $\cO_{k,r}$-modules.  Let 
us make this precise.

\begin{definition}
Let $\cC$ be a category.  By a {\em sheaf of rings on $\cC$} we mean
a contravariant functor, $\cO$, from $\cC$ to the category of rings.
\end{definition}
In other words, a sheaf of rings $\cO$
is the data consisting of
\begin{enumerate}
\item a ring, $\cO(P)$, for each $P\in\Ob(\cC)$, and
\item a morphism of rings $\cO(\phi)\from \cO(Q)\to\cO(P)$ for
each morphism $\phi\from P\to Q$ in $\cC$
\end{enumerate}
such that $\phi\to \cO(\phi)$ respects composition
(i.e., $\cO(\phi_2\circ\phi_1)=\cO(\phi_1)\circ\cO(\phi_2)$) and
takes identity maps to identity maps.
Hence,
if $\cC$ is the bipartite category $(\cA,\cB,E)$, a sheaf of rings, $\cO$,
therefore consists of giving a ring, $\cO(P)$, for each $P\in \cA\amalg\cB$,
and a morphism $\cO(A,B)\from\cO(B)\to\cO(A)$ for each $(A,B)\in E$;
there are no requirements on the $\cO(A,B)$, since there are no
nontrivial compositions in $\cC$ (i.e., if $\phi_2\circ\phi_1$ is defined,
then at least one of $\phi_1,\phi_2$ is an identity morphism).

For example, a sheaf of $\cO$-modules on $\CTwoV$ is illustrated in
Figure~\ref{fi_generic_Omod}; this consists of arbitrary rings
$R_1,R_2,R_2,S_1,S_2$ and, for $i=1,2$, arbitrary morphisms of rings
$R_i\to S_i$ and $R_3\to S_i$.

\begin{figure}[h]
\begin{center}
\begin{tabular}{ |c |c|c|}
\hline
$\mathcal{C}$ & $\mathcal{O}$ & $\mathcal{M}$\\
\hline
\Category & \Struct & \SheafModules \\
\hline
\end{tabular}
\end{center}
\caption{A sheaf of rings $\cO$ and a sheaf of $\cO$-modules, $\cM$ on
$\CTwoV$.}
\label{fi_generic_Omod}
\end{figure}

\begin{definition}
Let $\cC$ and $\cD$ be categories.  By a {\em sheaf on $\cC$ with values
in $\cD$} we mean a contravariant functor $\cC\to\cD$.
\end{definition}
This definition not only generalizes our definition of sheaves of
$k$-vector spaces and of rings---making precise what we mean by a
sheaf of $k$-algebras---but will also be useful later in this section.

\begin{definition}
If $\cO$ is a sheaf of rings on $\cC$, then a {\em sheaf of $\cO$-modules}
is a sheaf $\cM$ of abelian groups on $\cC$ such that
\begin{enumerate}
\item for each $P\in\Ob(\cC)$, $\cM(P)$ is endowed with the structure
of an $\cO(P)$-module, and
\item the ring structure is respected under restriction, i.e., for
each morphism $\phi\from P\to Q$ of $\cC$ we have
\begin{equation}\label{eq_sheaf_of_mods}
\cM(\phi) (rm) = \bigl( \cO(\phi)r\bigr) \, \bigl( \cM(\phi)m\bigr)
\end{equation} 
for all $r\in\cO(Q)$ and $m\in \cM(Q)$.
\end{enumerate}
\end{definition}
Note that \eqref{eq_sheaf_of_mods} is the same as 
\eqref{eq_sheaf_of_mods_bi}.
In this article all modules are assumed to be unital, i.e., the unit in
the ring acts as the identity element on the module\footnote{In [SGA4]
this is not generally assumed; see, for example, [SGA4]
Proposition~II.6.7, which speaks of {\em modules unitaires};
therefore, in [SGA4], a $k$-module $k[\epsilon]/(\epsilon^2)$ could
have $1$ acting on it via
$p(\epsilon)\mapsto (1+\epsilon)p(\epsilon)$.
}.

\begin{example}
If $k$ is a field, then a $k$-module is the same thing as a $k$-vector
space.
If $\cC$ is any category, then the constant sheaf
$\underline k$ is a sheaf of rings, and a $\underline k$-module is
the same thing as a sheaf of $k$-vector spaces.
\end{example}

\begin{example}
If $\cC=\CTwoV$ and $k$ is a field, then $\cO_{k,r}$
(Definition~\ref{de_cO_r}) is a sheaf of rings and, moreover, a sheaf of
$k$-algebras, and for each $\ourv d\in\integers^2$,
$\cM_{k,r,\ourv d}$ is a sheaf of $\cO_{k,r}$-modules.
\end{example}

\begin{definition}
A {\em morphism} $u\from \cF\to\cG$ of sheaves on $\cC$ with values in
$\cD$ is a natural transformation of functors.
\end{definition}
In other words, a morphism $u\from\cF\to\cG$ consists of the data
$u(P)\from\cF(P)\to\cG(P)$ for each $P\in\Ob(\cC)$ such that
for any $\phi\from P\to Q$ in $\cC$ we have
$u(P)\cF(\phi)=\cG(\phi)u(Q)$.

\begin{definition}
Let $\cO$ be a sheaf of rings on $\cC$.
If $\cM,\cM'$ are two sheaves of $\cO$-modules, we use
$$
\Hom_\cO(\cM,\cM')
$$
to denote the set of morphisms $u\from\cM\to\cM'$ such that respect
the $\cO$-structure of $\cM,\cM'$, i.e., such that for each
$P\in\Ob(\cC)$ we have $u(P)(rm)=r\,u(P)m$ (this is an equation in
$\cM'(P)$) for all $r\in\cO(P)$ and $m\in\cM(P)$.
\end{definition}

\subsection{Examples of $\Hom_\cO$}

\begin{example}
Let $k$ be a field and $r\ge 1$ an integer, and
let $\cM$ be a sheaf of $\cO_{k,r}$-modules on $\CTwoV$.  Then
there are natural isomorphisms
$$
\Gamma(\cM)\isom\Hom_{\underline k}(\underline k,\cM)
\isom\Hom_{\cO_{k,r}}(\cO_{k,r},\cM).
$$
\end{example}

\begin{example}
Let $k$ be a field and $y$ an indeterminate.
Let $\cO=\Delta_0$ be the category with one object, $0$, and one morphism,
i.e.,
the bipartite category $(\{0\},\emptyset,\emptyset)$.  Then
a sheaf of rings (of $k$-vectors spaces, etc.)
on $\Delta_0$ is just a ring (a $k$-vector spaces, etc.),
i.e., its value on $0\in\Ob(\Delta_0)$.
Consider a
$$
\phi\in \Hom_{k[y]}\bigl( k[y],k[y] \bigr) ,
$$
where $\Hom_{k[y]}$ means morphisms of $k[y]$-modules; then $\phi$
is determined by the image of $1\in k[y]$, i.e., by $\phi(1)\in k[y]$.
It follows that
$$
\Hom_{k[y]}\bigl( k[y],k[y] \bigr) \isom k[y].
$$
There is a natural inclusion
\begin{equation}\label{eq_k_alg_k_vs}
\Hom_{k[y]}\bigl( k[y],k[y] \bigr) \to
\Hom_k\bigl( k[y],k[y] \bigr),
\end{equation}
where $\Hom_k$ means morphisms of $k$-algebras, i.e., of $k$ vector spaces.
However a linear transformation $\phi\from \Hom_k(k[y],k[y])$ is
determined by $\phi(1),\phi(y),\phi(y^2),\ldots$, each of which has
no forced dependence on the others; hence
$$
\Hom_k\bigl( k[y],k[y] \bigr) \isom \bigl( k[y] \bigr)^\integers
$$
Hence the inclusion \eqref{eq_k_alg_k_vs} is strict.
Notice that this inclusion is an inclusion of sets, but can be
viewed as an inclusion in the category of $k$-vector spaces or
even left or right $k[y]$-algebras (where $k[y]$ acts on 
$\phi\in\Hom_k(k[y],k[y])$ by post- or pre-multiplication).
\end{example}

More generally, for a category $\cC$ and a sheaf of $k$-algebras,
$\cO$, there is an inclusion
$$
\Hom_{\cO}(\cM,\cM') \to \Hom_{\underline k}(\cM,\cM')
$$
which is often a strict inclusion.

\begin{example}
Let $k$ be a field and $r\ge 1$ an integer.  We shall see that
$$
\Hom_{\cO_{r,k}}(\cM_{k,r,\ourv d},\cM_{k,r,\ourv d'})
$$
is a finite dimensional $k$-vector space for any 
$\ourv d,\ourv d'\in\integers^2$, whereas
$$
\Hom_k(\cM_{k,r,\ourv d},\cM_{k,r,\ourv d'})
$$
for $\ourv d=\ourv d'=\ourv 0$
(and many other pairs $\ourv d,\ourv d'$)
is infinite dimensional.
\end{example}

\subsection{(Co)homology and Ext groups}

Below we will see that
for any sheaf of rings $\cO$ on $\CTwoV$, the category of
$\cO$-modules is an abelian category with enough projectives (and
injectives).

\begin{definition}
If $\cO$ is a sheaf of rings on a category $\cC$, and $\cF,\cG$
are two $\cO$-modules, we use $\Ext^i_\cO(\cF,\cG)$ to denote the 
derived bifunctors of
$$
(\cF,\cG) \mapsto \Hom_\cO(\cF,\cG).
$$
\end{definition}
Recall that, in practice, this means that we can compute 
$\Ext^i_\cO(\cF,\cG)$ from a projective resolution of $\cF$ or an
injective resolution of $\cG$, or from the double complex involving
both these resolutions (and the resulting Ext groups are
isomorphic via a unique isomorphism that can be constructed from
two different projective resolutions of $\cF$ and/or two injective
resolutions of $\cG$).
For a reference, see
\cite{weibel,hilton_stammbach} in the case of $R$-modules for a ring $R$;
these facts are valid for arbitrary abelian category, for
reasons discussed in \cite{hartshorne}, page~203,
and in the discussion regarding Theorem~1.6.1 of \cite{weibel}, i.e., 
regarding the Freyd-Mitchell Embedding Theorem 
(or \cite{freyd} or the original \cite{freyd_original}).

\subsection{The Yoneda Pairing}

Here is standard lemma regarding Ext groups that we will need,
usually called the {\em Yoneda product} or {\em Yoneda pairing}
for Ext groups,
described in \cite{hilton_stammbach}, Exercise~IV.9.3. 
Let us summarize the result that we need.

\begin{lemma}[Yoneda Product]\label{le_Yoneda_prod}
Let $\cF_1,\cF_2,\cF_3$ be any three elements of an abelian category with
either (1) enough projectives, or (2) enough injectives.
Then for any $i,j$ there is a
pairing
\begin{equation}\label{eq_Yoneda_pairing}
\Ext^i(\cF_1,\cF_2) \times \Ext^j(\cF_2,\cF_3) \to
\Ext^{i+j}(\cF_1,\cF_3) 
\end{equation} 
such that
\begin{enumerate}
\item
the paring is natural (i.e., functorial) in $\cF_1,\cF_2,\cF_3$, 
where naturality in $\cF_2$ means that
the morphism of $\cF_2$-covariant functors
\begin{equation}\label{eq_Yoneda_pairing_alt}
\Ext^i(\cF_1,\cF_2) \to
\Hom_\integers \bigl( \Ext^j(\cF_2,\cF_3), \Ext^{i+j}(\cF_1,\cF_3) \bigr)
\end{equation} 
is natural in $\cF_2$;
\item
if $j=0$ and $\cF_2=\cF_3$, then the pairing \eqref{eq_Yoneda_pairing}
acts via the functoriality of $\Ext^i(\cF_1,\cF_2)$ in the variable $\cF_2$
(and similarly for $i=0$ and $\cF_1=\cF_2$);
in particular,
the image $\id_{\cF_1}$ in \eqref{eq_Yoneda_pairing_alt} is the 
identity morphism
of $\Ext^j(\cF_1,\cF_3)$; and
\item
if $0\to\cG_1\to\cG_2\to\cG_3\to 0$ is any short exact sequence, then
the resulting
long exact sequence 
$$
\cdots\to \Ext^i(\cF_1,\cG_1)\to\Ext^i(\cF_1,\cG_2)\to\Ext^i(\cF_1,\cG_3)\to
\Ext^{i+1}(\cF_1,\cG_1)\to\cdots
$$
admits a chain map to the long exact sequence resulting from the
right-hand-side of \eqref{eq_Yoneda_pairing_alt}, which takes
$\Ext^i(\cF_1,\cG_j)$ to
$$
\Hom_\integers \bigl( \Ext^{k-i}(\cG_j,\cF_3), \Ext^k(\cF_1,\cF_3) \bigr)
$$
\end{enumerate}
\end{lemma}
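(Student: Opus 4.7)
The plan is to construct the pairing via Yoneda's description of $\Ext^n$ as equivalence classes of $n$-fold extensions, with the product realized by splicing; the three claimed properties then follow by standard diagram manipulation. This is essentially the argument outlined in \cite{hilton_stammbach}, Exercise~IV.9.3, and I would adapt it to the present abelian-category setting using Freyd--Mitchell where convenient.

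First, I would recall Yoneda's identification of $\Ext^n(A,B)$ with equivalence classes of exact sequences
\begin{equation*}
0 \to B \to E_n \to E_{n-1} \to \cdots \to E_1 \to A \to 0
\end{equation*}
modulo the equivalence generated by chain maps fixing the endpoints. Given $\alpha \in \Ext^i(\cF_1,\cF_2)$ represented by such an $i$-extension and $\beta \in \Ext^j(\cF_2,\cF_3)$ represented by a $j$-extension, the product is defined as the spliced $(i+j)$-extension whose crucial middle map is the composition $F_1 \twoheadrightarrow \cF_2 \hookrightarrow E_i$. The first step after this construction is to verify that it is well-defined on equivalence classes and lands in $\Ext^{i+j}(\cF_1,\cF_3)$; this is a routine diagram chase that I would not write out in detail.

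Next, I would verify properties~(1) and~(2) directly from the splicing definition. Naturality in $\cF_1$ (resp.\ $\cF_3$) corresponds to pullback (resp.\ pushout) of extensions, both of which commute with splicing; naturality in $\cF_2$ reduces to the same observation applied at both ends of the splice, since a morphism $\cF_2 \to \cF_2'$ acts by pushout on the first extension and by pullback on the second, and these two operations yield equivalent composite extensions. Property~(2) is tautological: the class of $\id_{\cF_2}$ in $\Hom(\cF_2,\cF_2) = \Ext^0(\cF_2,\cF_2)$ corresponds to the trivial $0$-extension, and splicing with a trivial extension returns the original sequence unchanged.

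The main obstacle, I expect, is property~(3). The key lemma required is that the connecting homomorphism $\delta : \Ext^i(\cF_1,\cG_3) \to \Ext^{i+1}(\cF_1,\cG_1)$ attached to $0 \to \cG_1 \to \cG_2 \to \cG_3 \to 0$ can itself be realized as Yoneda multiplication against the class $\xi \in \Ext^1(\cG_3,\cG_1)$ of that very short exact sequence; granting this, the chain-map compatibility becomes the associativity of Yoneda splicing, which is manifest from the construction. I would prove the identification of $\delta$ by unwinding its snake-lemma definition with an injective resolution of $\cG_3$, which expresses $\delta$ as prepending $\xi$ to a representative of the given class. A cleaner alternative, which I would fall back on if the diagram chase becomes cumbersome, is to pass to the bounded derived category, where $\Ext^n(A,B) = \Hom_{D(\cA)}(A,B[n])$, the Yoneda product is literal composition after shift, and both associativity and the identification of $\delta$ with composition against the distinguished triangle attached to $0\to\cG_1\to\cG_2\to\cG_3\to 0$ are automatic.
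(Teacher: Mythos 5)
Your proposal is correct in outline, but it takes a genuinely different route from the paper: the paper does not prove Lemma~\ref{le_Yoneda_prod} at all, quoting it as a standard fact from \cite{hilton_stammbach} (Exercise~IV.9.3) and then remarking that only the case $i+j=1$ is ever used, where one factor is $\Ext^0=\Hom$ and the pairing is nothing but the functorial action of a morphism $\cF_2\to\cF_3$ (resp.\ $\cF_1\to\cF_2$) on $\Ext^1$. Your splicing construction proves the full statement for all $i,j$ and is self-contained; what it costs is (a) the comparison of Yoneda $\Ext$ with the derived-functor $\Ext$ used in the paper (this is where the hypothesis of enough projectives or injectives enters, and the comparison must be checked to be natural), and (b) the identification of the connecting map $\delta$ with splicing against the class $\xi\in\Ext^1(\cG_3,\cG_1)$, after which property~(3) is indeed just associativity of the product (up to sign); both are standard but are exactly the places where the real work lies. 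Two small imprecisions to fix if you write this out: to unwind $\delta$ in the second variable you should use either a projective resolution of $\cF_1$ (so that applying $\Hom(P_\bullet,-)$ to $0\to\cG_1\to\cG_2\to\cG_3\to 0$ stays exact) or a horseshoe of injective resolutions of all three $\cG_j$; an injective resolution of $\cG_3$ alone does not set up the snake lemma. Also, $\Ext^0$ classes act by pushout/pullback rather than literal splicing, so property~(2) for a general $\beta\in\Hom(\cF_2,\cF_3)$ requires the (routine) check that this pushout action agrees with the functorial action of $\beta$ on $\Ext^i(\cF_1,-)$, not only the identity case you treat. Your derived-category fallback is also legitimate and is arguably the cleanest way to obtain property~(3); for the purposes of this paper, however, the lightweight observation recorded after the lemma (the $i+j=1$ case via functoriality) is all that is actually needed.
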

We remark that 
in this article, we only need the case $i+j=1$, whereby one of
$i,j$ must be zero.  In this case one can verify that the pairing
of an $\alpha\in\Ext^1(\cF_1,\cF_2)$ with $\beta\in\Ext^0(\cF_2,\cF_3)$
is just the map $\Ext^1(\cF_1,\cF_2)\to\Ext^1(\cF_1,\cF_3)$ induced by
$\beta$ viewed as an element of $\Hom(\cF_2,\cF_3)$ by the fact
that $\Ext^1(\cF_1,\cF_2)$ is natural (i.e., functorial) in $\cF_2$.
A similar discussion holds for the pairing of $\Ext^0(\cF_1,\cF_2)$
with $\Ext^1(\cF_2,\cF_3)$.

\subsection{Skyscrapers and Coskyscrapers}

In this subsection we explain the construction of {\em skyscraper}
and {\em coskyscraper} sheaves.  We give these constructions for
more general categories than $\CTwoV$.
The notion of a skyscraper sheaf and their use to construct injective
resolutions is standard (see \cite{hartshorne}, Proposition~III.2.2).
The notion of a coskyscraper sheaf and their use to construct projective
resolutions is less standard---since this construction doesn't work
for (infinite) topological spaces.
However, for sheaves of $k$-vector spaces, or sheaves of $\cO$-modules
for any constant sheaf, $\cO$, the construction is immediate
from [SGA4] Proposition~I.5.1, as we will explain at the end of this
subsection; the adaptation to the more general situation is akin to
the standard definition of the pullback $f^*$ of modules where
$f$ is a morphism of ringed spaces (see \cite{hartshorne}, top of
page~110).

\begin{definition}\label{de_top_semitop}
We say that a category is {\em topological} all its Hom-sets are of size
one or zero.
We say that a category, $\cC$, is {\em semi-topological} if for for
$P\in\Ob(\cC)$, $\Hom(P,P)$ consists of only the identity map.
If $\cC$ is any category and $x,y\in\Ob(\cC)$, 
we use $x\le y$ to mean that $\Hom(x,y)$ is nonempty.
\end{definition}
The following properties are easy to verify:
if $\cC$ is semi-topological, then $x\le y$ and $y\le x$ implies that
$x$ and $y$ are isomorphic; if $\cC$ is a bipartite category, then
$x,y$ are isomorphic iff $x=y$;
if $\cC$ is finite and topological, then the downsets of $\cC$ are the
open subsets of a
topological space whose points are isomorphism classes of elements of $\cC$.
Semitopological categories have many properties in common with topological
spaces (see \cite{friedman_cohomology}).

\begin{definition}\label{de_top_sky}
Let $\cO$ be a sheaf of rings on a topological category, $\cC$.
For $P\in\Ob(\cC)$ and $M$ an $\cO(P)$-module we define
the {\em skyscraper (sheaf) at $P$ with value $M$} to be the sheaf
$\Sky=\Sky(P,M)$ 
\begin{enumerate}
\item whose values are
$$
\Sky(Q) = \left\{ \begin{array}{ll}    M & \mbox{$P\le Q$,} \\
0 & \mbox{otherwise;}  \end{array} \right.
$$
\item whose restriction maps are the identity map for all $Q_1,Q_2\ge P$;
\item where the $\cO(Q)$-module structure on $\cS(Q)$ is given by the
restriction map $\cO(Q)\to\cO(P)$ and the $\cO(P)$-module structure of $M$.
\end{enumerate}
\end{definition}

\begin{lemma}\label{le_top_sky} 
Let $\cO$ be a sheaf of rings on a topological
category $\cC$, and let $M$ be an $\cO(P)$-module.  Then for any $\cF$
there is a natural isomorphism
$$
\Hom_{\cO(P)}\bigl( \cF(P), M \bigr)
\to
\Hom_{\cO}\bigl(\cF, \Sky(P,M) \bigr) \ .
$$
If $M$ is an injective $\cO(P)$-module, then $\Sky(P,M)$ is an injective
$\cO$-module.
\end{lemma}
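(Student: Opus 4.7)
The plan is to construct the adjunction explicitly and then deduce injectivity formally from it. First, I would define the forward map: given a morphism of $\cO$-modules $\psi \from \cF \to \Sky(P,M)$, evaluating at $P$ gives a map $\psi(P) \from \cF(P) \to \Sky(P,M)(P) = M$, which is $\cO(P)$-linear by definition of a morphism of $\cO$-modules. This is manifestly functorial in $\cF$.

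Next I would construct the inverse. Given an $\cO(P)$-linear map $\phi \from \cF(P) \to M$, I would define $\psi \from \cF \to \Sky(P,M)$ as follows. For $Q \in \Ob(\cC)$ with $Q \not\ge P$, set $\psi(Q) = 0$; this is forced since $\Sky(P,M)(Q)=0$. For $Q \ge P$, the hypothesis that $\cC$ is topological means there is a unique morphism $\alpha_Q \from P \to Q$ in $\cC$, so the contravariant functor $\cF$ gives a unique restriction $\cF(\alpha_Q) \from \cF(Q) \to \cF(P)$; I define $\psi(Q)$ to be the composite $\cF(Q) \xrightarrow{\cF(\alpha_Q)} \cF(P) \xrightarrow{\phi} M$. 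The main things to verify are (i) naturality of $\psi$, i.e., compatibility with restriction maps of $\cF$ and $\Sky(P,M)$, which follows because for any $Q_1 \le Q_2$ with both $\ge P$, uniqueness of morphisms in a topological category forces the relevant triangle of morphisms in $\cC$ to commute, hence the triangle of restrictions of $\cF$ commutes, and the restriction of $\Sky(P,M)$ between $Q_1,Q_2$ is the identity; and (ii) $\cO$-linearity at each $Q\ge P$, which holds because the $\cO(Q)$-module structure on $M = \Sky(P,M)(Q)$ factors through $\cO(Q)\to \cO(P)$, and the restriction $\cF(\alpha_Q)$ is semi-linear over $\cO(\alpha_Q)$.

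Then I would verify that the two assignments $\psi\mapsto\psi(P)$ and $\phi\mapsto\psi_\phi$ are mutually inverse: starting from $\psi$, evaluating at $P$ and then re-expanding reproduces $\psi$, because $\alpha_P$ is the identity (a topological category has $\Hom(P,P)=\{\id\}$), and because for $Q\not\ge P$ we must have $\psi(Q)=0$ already, while for $Q\ge P$ naturality of $\psi$ forces $\psi(Q)$ to equal $\phi\circ\cF(\alpha_Q)$. Conversely, $\psi_\phi(P) = \phi\circ\cF(\id_P)=\phi$. Naturality of the isomorphism in $\cF$ is immediate from the construction.

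Finally, for the injectivity statement, I would argue formally: the isomorphism just proven shows that the functor $\Hom_\cO\bigl(-,\Sky(P,M)\bigr)$ factors as
\[
\cF \mapsto \cF(P) \mapsto \Hom_{\cO(P)}\bigl(\cF(P),M\bigr).
\]
The evaluation functor $\cF \mapsto \cF(P)$ is exact, since kernels and cokernels in the category of $\cO$-modules are computed value-by-value (as discussed in Section~\ref{se_simpleII}). If $M$ is an injective $\cO(P)$-module then $\Hom_{\cO(P)}(-,M)$ is exact, so the composite is exact, proving that $\Sky(P,M)$ is injective. The main subtle point is really the uniqueness of $\alpha_Q$ used in the construction, which is the one place the topological hypothesis is essential; without it the formula for $\psi(Q)$ would depend on a choice of morphism $P\to Q$, and even if one took a limit over all such morphisms, $\cO$-linearity would be delicate.
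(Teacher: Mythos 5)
Your construction is correct and is precisely the ``straightforward'' verification the paper has in mind: the paper omits the general argument (referring the reader to the explicit check for $\CTwoV$ via Figure~\ref{fi_sky_adjointness}), and you supply it by building the unit/counit of the adjunction directly, using uniqueness of the morphism $P\to Q$ in a topological category, and then deducing injectivity formally from exactness of evaluation at $P$ composed with $\Hom_{\cO(P)}(-,M)$, exactly as the paper's remark that the second fact is an easy consequence of \eqref{eq_sky_adjointness}. No gaps; this matches the intended proof.
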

The proof is straightfoward; the reader is free to skip this general
proof and just verify this for $\CTwoV$ as indicated in the next section.

Similarly we have the construction of coskyscrapers.

\begin{definition}\label{de_top_cosky}
Let $\cO$ be a sheaf of rings on a topological category, $\cC$.
For $P\in\Ob(\cC)$ and $M$ an $\cO(P)$-module we define
the {\em coskyscraper (sheaf) at $P$ with value $M$} to be the sheaf
$\Sky=\Sky(P,M)$ 
\begin{enumerate}
\item whose values are
$$
\CoSky(Q) = \left\{ \begin{array}{ll}    
M\otimes_{\cO(P)}\cO(Q) & \mbox{if $Q\le P$,} \\
0 & \mbox{otherwise;} \end{array}  \right.
$$
\item whose restriction maps are induced by the map $m\otimes 1\mapsto
m\otimes 1$ for all $Q_1,Q_2\ge P$;
\item where the $\cO(Q)$-module structure on $\cS(Q)$ is given by acting
on the second component in the tensor product.
\end{enumerate}
\end{definition}

\begin{lemma}\label{le_top_cosky}
Let $\cO$ be a sheaf of rings on a topological
category $\cC$, and let $M$ be an $\cO(P)$-module.  Then for any $\cF$
there is a natural isomorphism
$$
\Hom_{\cO(P)}\bigl( M,\cF(P) \bigr)
\to
\Hom_{\cO}\bigl( \CoSky(P,M) , \cF \bigr) \ .
$$
If $M$ is a projective $\cO(P)$-module, then $\CoSky(P,M)$ is a projective
$\cO$-module.
\end{lemma}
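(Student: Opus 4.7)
The plan is to exhibit explicit mutually inverse maps in the claimed bijection, and then deduce the projectivity statement from the exactness of the ``stalk'' functor $\cF\mapsto\cF(P)$. Conceptually, the lemma is just the assertion that $\CoSky(P,-)$ is left adjoint to $\cF\mapsto\cF(P)$ on the category of $\cO$-modules, mirroring the adjunction hidden in Lemma~\ref{le_top_sky} for skyscrapers.

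First I would construct the map from right to left. Given $\Psi\from \CoSky(P,M)\to \cF$, note that $\CoSky(P,M)(P)=M\otimes_{\cO(P)}\cO(P)\isom M$, so I define $\psi\from M\to\cF(P)$ by $\psi(m)=\Psi(P)(m\otimes 1)$; this is $\cO(P)$-linear because $\Psi(P)$ is. For the other direction, given $\phi\from M\to\cF(P)$, I define $\Phi(Q)\from M\otimes_{\cO(P)}\cO(Q)\to\cF(Q)$ for each $Q\le P$ by
\[
\Phi(Q)(m\otimes r) \;=\; r\cdot \cF(Q\to P)\bigl(\phi(m)\bigr),
\]
and $\Phi(Q)=0$ for $Q\not\le P$. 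The verification that this is well-defined over $\otimes_{\cO(P)}$ uses exactly that $\phi$ is $\cO(P)$-linear and that $\cF(Q\to P)$ respects the module structures via $\cO(P)\to\cO(Q)$; the verification that $\Phi$ commutes with restrictions $Q_1\le Q_2\le P$ uses functoriality of $\cF$ together with the fact that the restriction map of $\CoSky$ is induced by $m\otimes 1\mapsto m\otimes 1$. A short direct calculation then shows $\Phi\mapsto\psi\mapsto\Phi$ and $\phi\mapsto\Phi\mapsto\phi$ are both the identity, and naturality in $\cF$ is immediate from the definition of $\Phi$.

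For the projectivity statement, observe that the bijection just proved is a natural isomorphism of functors in $\cF$,
\[
\Hom_\cO\bigl(\CoSky(P,M),\cF\bigr) \;\isom\; \Hom_{\cO(P)}\bigl(M,\cF(P)\bigr).
\]
The functor $\cF\mapsto\cF(P)$ on $\cO$-modules is exact, since kernels and cokernels of morphisms of $\cO$-modules on a (semi)topological category are computed value-by-value (as discussed for sheaves of $k$-vector spaces in Section~\ref{se_simpleII}, and in the same manner for $\cO$-modules). Hence if $M$ is a projective $\cO(P)$-module, the composite functor on the right is exact, so $\Hom_\cO(\CoSky(P,M),-)$ is exact and $\CoSky(P,M)$ is projective.

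The main nuisance, rather than any conceptual obstacle, is the bookkeeping around the tensor product $M\otimes_{\cO(P)}\cO(Q)$: one must track that $\cO(Q)$ is an $\cO(P)$-algebra via the restriction map $\cO(P)\to\cO(Q)$ induced by $Q\le P$, and that the $\cO(Q)$-module structure on $\CoSky(Q)$ comes from the right factor, while the restriction maps of $\CoSky$ are induced by extension of scalars. Once these identifications are fixed, the well-definedness and naturality checks reduce to one-line diagram chases, and no step presents a genuine difficulty beyond this bookkeeping.
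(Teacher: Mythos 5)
Your proof is correct and follows essentially the same route the paper has in mind: the isomorphism is exactly the adjunction between $\CoSky(P,-)$ and evaluation at $P$ (which the paper verifies for $\CTwoV$ via \eqref{eq_cosky_adjointness} and otherwise defers to [SGA4], Proposition~I.5.1), and projectivity follows because evaluation at $P$ is exact, limits and colimits being computed value-by-value. Your write-up simply makes explicit, for a general topological category, the unit/counit bookkeeping that the paper leaves as a ``straightforward'' adaptation.
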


It is a standard fact that for any ring, $A$, the category of $A$-modules
has enough injectives and projectives (see \cite{weibel}).

\begin{lemma}
Let $\cO$ be a sheaf of ring on any topological category $\cC$.
Then the category of $\cO$-modules has enough injectives and projectives.
\end{lemma}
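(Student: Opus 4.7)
The plan is to use the skyscraper and coskyscraper constructions of Lemmas~\ref{le_top_sky} and \ref{le_top_cosky}, combined with the standard fact that the category of modules over any ring has enough injectives and projectives, to reduce the statement for sheaves of $\cO$-modules to the analogous statement for ordinary $\cO(P)$-modules at each $P\in\Ob(\cC)$.

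For the injective case, given an $\cO$-module $\cF$, first I would choose, for each $P\in\Ob(\cC)$, an injection $\cF(P)\hookrightarrow I_P$ of $\cO(P)$-modules with $I_P$ injective. By Lemma~\ref{le_top_sky}, the composite $\cF(P)\hookrightarrow I_P = \Sky(P,I_P)(P)$ corresponds to a morphism $u_P\from \cF\to \Sky(P,I_P)$ of $\cO$-modules, and each $\Sky(P,I_P)$ is an injective $\cO$-module. Forming the product $\cI=\prod_{P\in\Ob(\cC)} \Sky(P,I_P)$ (value-by-value), which is again injective since products of injectives are injective in any abelian category, the $u_P$ assemble into a single morphism $u\from\cF\to\cI$. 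To check that $u$ is a monomorphism, note that at any $Q\in\Ob(\cC)$ the $Q$-value of $u$ has a component $\cF(Q)\to I_Q$ (coming from the factor $P=Q$, using $Q\le Q$); this component is already injective by construction of $I_Q$, so $u(Q)$ is injective. Hence $u$ is the desired monomorphism into an injective.

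For the projective case, I would argue dually: for each $P$ choose a surjection $R_P\twoheadrightarrow \cF(P)$ of $\cO(P)$-modules with $R_P$ projective, invoke Lemma~\ref{le_top_cosky} to turn this into a morphism $v_P\from \CoSky(P,R_P)\to \cF$ of $\cO$-modules (with $\CoSky(P,R_P)$ projective), and form the direct sum $\cP=\bigoplus_{P\in\Ob(\cC)} \CoSky(P,R_P)$, which is projective since coproducts of projectives are projective. The induced morphism $v\from\cP\to\cF$ is surjective at every $Q\in\Ob(\cC)$, because the $P=Q$ summand contributes $R_Q\otimes_{\cO(Q)}\cO(Q)=R_Q\twoheadrightarrow \cF(Q)$, which is already surjective.

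The main obstacle—though a mild one—is the bookkeeping for the value-by-value verification that $u$ is mono and $v$ is epi: one must unwind Definitions~\ref{de_top_sky} and \ref{de_top_cosky} to confirm that the diagonal term $P=Q$ really does appear in $\cI(Q)=\prod_{P\le Q} I_P$ and in $\cP(Q)=\bigoplus_{Q\le P} R_P\otimes_{\cO(P)}\cO(Q)$, which uses that $\cC$ is semi-topological so $P\le P$ via the identity. Once this is in place, injectivity and projectivity of $\cI$ and $\cP$ follow formally from Lemmas~\ref{le_top_sky} and \ref{le_top_cosky} together with the abelian-category facts about products/coproducts of injectives/projectives. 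A small caveat is that if $\Ob(\cC)$ is not finite one should check that the relevant products and coproducts exist in the category of $\cO$-modules; since limits and colimits are computed value-by-value, this is automatic.
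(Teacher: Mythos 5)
Your proposal is correct and follows essentially the same route as the paper: for each object choose a projective (resp.\ injective) $\cO(P)$-module surjecting onto (resp.\ receiving) $\cF(P)$, pass to coskyscrapers (resp.\ skyscrapers) via Lemmas~\ref{le_top_cosky} and \ref{le_top_sky}, and take the direct sum (resp.\ product), checking epi/mono value-by-value. The paper writes out only the projective half and says the injective half is similar; your write-up just supplies that symmetric half explicitly.
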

\begin{proof}
Let $\cM$ be an $\cO$-module, and for each $x\in\Ob(\cC)$ let
$P(x)$ be a projective $\cO(x)$-module such that there is a surjection
$P(x)\to \cM(x)$.  Then the natural map of
$$
\bigoplus_{x\in\Ob(\cC)} \CoSky\bigl(x,P(x)\bigr)
$$
to $\cM$ is a surjection, and hence there are enough projectives.  
We argue similarly that there are enough injectives.
\end{proof}

All of the above constructions work for semitopological categories.
For example, $\Sky(P,M)$ is, more generally, constructed as having values
$$
Q\mapsto M^{\Hom(P,Q)}
$$
and there is a natural restriction map; similarly for $\CoSky(P,M)$.
For the case where $\cO$ is a constant sheaf, this follows from the following
discussion (the more general case is the above simple adaptation).

We remark that our construction of skyscrapers and coskyscrapers
follows from the very general Proposition~I.5.1 of [SGA4]; let us
summarize the main points.
If $u\from\cC\to\cC'$ is any functor, then if $\cF'$ is a sheaf
on $\cC'$ with values in any category, $\cD$
(i.e., a {\em presheaf} in the sense of [SGA4]), then one sets $u^*\cF'$
to be the sheaf on $\cC$ with values in $\cD$ given by $\cF=\cF'\circ u$
(so $\cF(P)=\cF'(u(P))$ for $P\in\Ob(\cC))$.  The functor $u^*$ has a
left adjoint $u_!$ and a right adjoint $u_*$ described in
[SGA4], Proposition~I.5.1, assuming that certain limits in $\cD$ are
representable (i.e., exist).
Consider the special case of the above, where $\cC=\Delta_0$ (the
terminal category, whose object is $0$ and has only the identity morphism at
$0$), $x\in\Ob(\cC')$,
and $u_x\from\Delta_0\to\cC'$ is the morphism taking $0$ to $x$.
Taking $\cD$ to be the category of $k$-vector spaces for a field $k$
yields the skycrapers and coskyscrapers, as $(u_x)_* M$ and $(u_x)_!M$,
respectively in the case of sheaves with values in $\cD$, i.e., sheaves
of $\underline k$-modules.


\section{Foundations, Part 4: $\cO$-modules and Ext groups for $\CTwoV$}
\label{se_simpleIV}

In this section we state the practical consequences of 
Section~\ref{se_simpleIII} regarding a projective resolution for
$\cO_{k,r}$
and an injective resolution for
$\omega_{k,r}=\cM_{k,r,\ourv d}$.
These method are much more general, and immediately yield
projective resolutions for any
sheaf of the form $\cL_{k,r,\ourv d}$ and injective resolutions
for any sheaf of the form $\cM_{k,r,\ourv d}$.

\subsection{The Standard Setup}

\begin{definition}\label{de_standard_setup}
By a {\em standard setup} on $\CTwoV$ we mean a sheaf of rings
$\cO$ on $\CTwoV$ all of whose restriction maps are injections.
We use the notation $S_i=\cO(A_i)$ for $i=1,2$ and $R_j=\cO(B_j)$
for $j=1,2,3$; similarly, we often
discuss an $\cO$-module $\cM$ with the notation $N_i=\cO(A_i)$
and $M_j=\cM(B_j)$.
\end{definition}
We depict the standard setup as follows:
\begin{center}
\begin{tabular}{ |c |c|c|}
\hline
$\mathcal{C}$ & $\mathcal{O}$ & $\mathcal{M}$\\
\hline
\Category & \Struct & \SheafModules \\ 
\hline
\end{tabular}
\end{center}

The skyscraper sheaves of Definition~\ref{de_top_sky} can be 
depicted by the following diagrams:

%

\begin{center}
\begin{tabular}{ |c |c|c|}
\hline
$\Sky(A_1,N)$ & $\Sky(B_1,M)$ & $\Sky(B_3,M)$\\
\hline
\SkyAOneEdits &\SkyBOneEdits & \SkyBThreeEdits \\ 
\hline
\end{tabular}
\end{center}
where $N$ is an $S_1$-module, and $M$ is an $R_i$-module in
$\Sky(B_i,M)$; we have similar diagrams for $\Sky(A_2,N)$ and
$\Sky(A_1,N)$.
In $\Sky(A_1,N)$, the values at $B_i$ is $N$ for $i=1,3$, and
for these $i$ the set $N$ is an $R_i$-module via the restriction
maps $R_i\to S_1$ and the $S_1$-module structure on $N$.
The importance of the skyscraper sheaves are due to the the following
two facts:
\begin{enumerate}
\item there is a canonical isomorphism:
\begin{equation}\label{eq_sky_adjointness}
\Hom_\cO(\cF,\Sky(x,M) ) \to \Hom_{\cO(x)}\bigl( \cF(x),M \bigr)
\end{equation} 
for any $x\in\Ob(\CTwoV)$ and $\cO(x)$-module $M$, and therefore
\item $\Sky(x,I)$ is an injective $\cO$-module for any injective
$\cO(x)$-module $I$.
\end{enumerate}
The second fact is an easy consequence of \eqref{eq_sky_adjointness}.
To prove \eqref{eq_sky_adjointness}, the reader may either accept
Lemma~\ref{le_top_sky}, or verfiy this lemma, or simply verify this
lemma in the case of $\CTwoV$.

For example, to check
\eqref{eq_sky_adjointness} for $x=A_1$, one checks that
any morphism represented by the
solid arrow in Figure~\ref{fi_sky_adjointness} 
extends uniquely to the dashed arrows there to produce a morphism of 
$\cO$-modules.
\begin{figure}[h]
\SkyHomTwo
\caption{The adjointness for the skyscraper $\Sky(B_1,M)$}
\label{fi_sky_adjointness}
\end{figure}
The verification for $x=A_2$ is---by symmetry---the same, and the
verfication for $x=B_1,B_2,B_3$ is immediate.

Analogously we have coskyscraper sheaves of Definition~\ref{de_top_cosky}
depicted for $x=A_1,B_1,B_3$ as follows:
\begin{center}
\begin{tabular}{ |c |c|c|}
\hline
$\CoSky(A_1,N)$ & $\CoSky(B_1,M)$ & $\CoSky(B_3,M)$\\
\hline
\CoSkyAOneEdits &\CoSkyBOneEdits & \CoSkyBThreeEdits \\ 
\hline
\end{tabular}
\end{center}
and the satisfy:
\begin{enumerate}
\item there is a canonical isomorphism:
\begin{equation}\label{eq_cosky_adjointness}
\Hom_\cO(\CoSky(x,M),\cF) \to \Hom_{\cO(x)}\bigl( M,\cF(x) \bigr)
\end{equation}
for any $x\in\Ob(\CTwoV)$ and $\cO(x)$-module $M$, and therefore
\item $\CoSky(x,P)$ is a projective $\cO$-module for any projective
$\cO(x)$-module $P$.
\end{enumerate}

Again, to directly verify
\eqref{eq_cosky_adjointness} for $x=B_3$, one checks that
any morphism represented by the
solid arrow in Figure~\ref{fi_cosky_adjointness} 
extends uniquely to the dashed arrows there to give a morphism of
$\cO$-modules.
\begin{figure}[h]
\CoSkyHom
\caption{The adjointness for the coskyscraper $\CoSky(B_3,M)$}
\label{fi_cosky_adjointness}
\end{figure}
The verification of \eqref{eq_cosky_adjointness}
for $x=B_1,B_2$ is analogous, and the verification
for $x=A_1,A_2$ is easier.

\subsection{Projective Resoltions of $\cO$ and Beyond}

One consequence of the above is that if the restriction maps of
$\cO$ are injections, then $S_i\otimes_{R_j}R_j\isom S_i$ for all
morphisms $(A_i,B_j)$ in $\CTwoV$.  
This leads to a convenient projective resolution of $\cO$ and
similar sheaves.

\begin{lemma}\label{le_pro_O}
Let $\cO$ be as in Definition~\ref{de_standard_setup} and assume that
all restriction maps in $\cO$ are injections.
Then the $\cO$-module $\cO$ has a projective resolution (in the
category of $\cO$-modules) given by:
\begin{equation}\label{eq_proj_res_O}
0 \to \cP_1\to\cP_0\to \cO \to 0,
\end{equation} 
where
$$
\cP_1 = \bigoplus_{i=1}^2 \CoSky(A_i,\tilde S_i), \quad
\cP_0 = \bigoplus_{j=1}^3 \CoSky(B_j,R_j)
$$
where $\tilde S_i$ is the $S_i$-submodule of $S_i\oplus S_i$
generated by $(1,-1)$.
\end{lemma}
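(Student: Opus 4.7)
The plan is to make everything explicit at the level of values and restrictions, using that the category of sheaves on $\CTwoV$ is abelian with all limits computed value-by-value, and then to invoke the adjointness \eqref{eq_cosky_adjointness} both to define the morphisms of \eqref{eq_proj_res_O} and to read off projectivity of the terms.

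First I would unwind the values of the $\CoSky$ summands using Definition~\ref{de_top_cosky}. Because every restriction map of $\cO$ is injective, for each edge $(A_i,B_j)\in E$ the canonical map gives $R_j \otimes_{R_j} S_i \isom S_i$ as $S_i$-modules, with the restriction of $\CoSky(B_j,R_j)$ from $B_j$ to $A_i$ becoming $\cO(A_i,B_j)\from R_j\to S_i$. Hence
$$\cP_0(B_j)=R_j \ (j=1,2,3), \qquad \cP_0(A_i)=S_i\oplus S_i \ (i=1,2),$$
the two summands at $A_i$ coming from $B_i$ and $B_3$, while $\cP_1$ has value $\tilde S_i$ at $A_i$ and $0$ elsewhere.

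Next I would define the two morphisms of \eqref{eq_proj_res_O} via \eqref{eq_cosky_adjointness}. The map $\cP_0\to\cO$ is the sum over $j$ of the morphisms corresponding to $\id_{R_j}\in\Hom_{R_j}(R_j,R_j)$; value-by-value this is the identity at each $B_j$ and the sum map $(a,b)\mapsto a+b\from S_i\oplus S_i\to S_i$ at each $A_i$. The map $\cP_1\to\cP_0$ is the sum over $i$ of the morphisms corresponding to the inclusion $\tilde S_i\subset\cP_0(A_i)=S_i\oplus S_i$. Exactness is then checked value-by-value: at each $B_j$ the sequence reads $0\to 0\to R_j\to R_j\to 0$ with the identity in the middle, and at each $A_i$ it reads $0\to\tilde S_i\to S_i\oplus S_i\to S_i\to 0$ with inclusion followed by the sum map, whose kernel is $\{(a,-a)\colon a\in S_i\}=\tilde S_i$.

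Finally, projectivity of $\cP_0$ and $\cP_1$ follows immediately from \eqref{eq_cosky_adjointness}: each $R_j$ is a free $R_j$-module, and $\tilde S_i\isom S_i$ via $a\mapsto(a,-a)$ is a free $S_i$-module, so $\CoSky(B_j,R_j)$ and $\CoSky(A_i,\tilde S_i)$ each represent an exact functor of $\cF$ and are therefore projective $\cO$-modules, and direct sums of projectives are projective. The main bookkeeping obstacle is simply matching the submodule $\tilde S_i$ generated by $(1,-1)$ with the kernel of the sum map---which amounts to the sign convention $a\mapsto(a,-a)$---after which the resolution writes itself.
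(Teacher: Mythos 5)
Your proof is correct and follows essentially the same route as the paper's: identify the values of the coskyscrapers (using $R_j\otimes_{R_j}S_i\isom S_i$), take the natural map $\cP_0\to\cO$, check exactness value-by-value so that the kernel at $A_i$ is the submodule $\tilde S_i$ generated by $(1,-1)$ (zero at the $B_j$), and deduce projectivity of both terms from \eqref{eq_cosky_adjointness} together with freeness of $R_j$ and of $\tilde S_i\isom S_i$. The only difference is cosmetic: you spell out the morphisms via the adjunction and the exactness at each object explicitly, where the paper leaves the map and the kernel identification as a diagrammatic verification.
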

\begin{proof}
First note that for any $i,j$ such that there is a morphism $A_i\to B_j$,
we have that $S_i\otimes_{R_j}R_j=S_i$ since the $\cO$ restriction
map $R_j\to S_i$ is an injection.
It follows that $\cP_0\to\cO$ is a surjection.
Since $R_j$ is a free $R_j$-module, it follows that $\cP_0$ is projective.
Next we verify (value-by-value)
that the kernel of $\cP_0\to\cO$ is given by the sheaf in the diagram below:

\begin{tikzpicture} [dot/.style={circle,inner sep=1.5pt,fill}]

\node at (1.5,2)(v11){$0$};
\node at (1.5,1)(v12){$0$};
\node at (1.5,0)(v13){$0$};
\node at (0,1.5)(v14){$\tilde{S_1}$};
\node at (0,0.5)(v15){$\tilde{S_2}$};

\node at (6,2)(v21){$R_1$};
\node at (6,1)(v22){$R_3$};
\node at (6,0)(v23){$R_2$};
\node at (4,1.5)(v24){$S_1 \oplus S_1$};
\node at (4,0.5)(v25){$S_2 \oplus S_2$};

\node at (10.5,2)(v31){$R_1$};
\node at (10.5,1)(v32){$R_3$};
\node at (10.5,0)(v33){$R_2$};
\node at (9,1.5)(v34){$S_1$};
\node at (9,0.5)(v35){$S_2$};

\draw[->,line width=.3mm] (v11) -- (v14);
\draw[->,line width=.3mm] (v12) -- (v15);
\draw[->,line width=.3mm] (v12) -- (v14);
\draw[->,line width=.3mm] (v13) -- (v15);

\draw[->,line width=.3mm] (v21) -- (v24);
\draw[->,line width=.3mm] (v22) -- (v25);
\draw[->,line width=.3mm] (v22) -- (v24);
\draw[->,line width=.3mm] (v23) -- (v25);

\draw[->,line width=.3mm] (v31) -- (v34);
\draw[->,line width=.3mm] (v32) -- (v35);
\draw[->,line width=.3mm] (v32) -- (v34);
\draw[->,line width=.3mm] (v33) -- (v35);

\draw [->,red] (v11) to [bend right=-5](v21);
\draw [->,red] (v12) to [bend right=-3]  (v22);
\draw [->,red] (v13) to [bend left=-5](v23);
\draw [->,red] (v14) to [bend right=-5] (v24);
\draw [->,red] (v15) to [bend right=-5](v25);

\draw [->,red] (v21) to [bend right=-5](v31);
\draw [->,red] (v22) to [bend right=-3]  (v32);
\draw [->,red] (v23) to [bend left=-5](v33);
\draw [->,red] (v24) to [bend right=-5] (v34);
\draw [->,red] (v25) to [bend right=-5](v35);


\end{tikzpicture}

Now we see that
(1) $\tilde S_i$ is a free 
$S_i$-module, so that the kernel $\cP_1$ is a projective sheaf that
is the sum of coskyscrapers as indicted above.
\end{proof}

A similar projective resolution can be made for any sheaf, $\cL$,
such that its values are those of $\cO$ and its restriction maps
$\cO(A_i,B_j)\from R_j\to S_i$ map $1$ to a unit in $S_i$.
These sheaves are what we call {\em line bundles} or {\em invertible
sheaves} in Section~\ref{se_simpleV}, and they include the
$\cO_{k,r}$-modules $\cL_{k,r,\ourv d}$ for any $k,r,\ourv d$.

As a consequene we may compute the cohomology groups 
$H^i(\cM)=\Ext^i(\cO,\cM)$ by taking the above projective resolution
of $\cO$, which yields the following result.

\begin{corollary}
Let $\cO$ be any sheaf of $k$-algebras
on $\CTwoV$ with injective restriction
maps.  Then, for any $\cO$-module, $\cM$,
with notation as in Definition~\ref{de_standard_setup}, we have
that $H^i(\cM)=\Ext^i(\cO,\cM)$ for all $i$ are the same groups as those
defined in 
Definition~\ref{de_cohomology}.
\end{corollary}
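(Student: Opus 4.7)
The plan is to compute $\Ext^i(\cO,\cM)$ from the projective resolution of $\cO$ constructed in Lemma~\ref{le_pro_O}, and verify that the resulting two-term complex is precisely the complex $\bigoplus_j \cM(B_j) \to \bigoplus_i \cM(A_i)$ whose kernel and cokernel define $H^0(\cM)$ and $H^1(\cM)$ in Definition~\ref{de_cohomology}. Since this projective resolution has length one, we get immediately that $\Ext^i(\cO,\cM)=0$ for $i\ge 2$, which matches the explicit definition (where only $H^0,H^1$ are nonzero).

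First I would apply the functor $\Hom_\cO(-,\cM)$ to the projective resolution
$$
0 \to \cP_1 \to \cP_0 \to \cO \to 0
$$
of Lemma~\ref{le_pro_O}, where $\cP_0 = \bigoplus_{j=1}^3 \CoSky(B_j,R_j)$ and $\cP_1 = \bigoplus_{i=1}^2 \CoSky(A_i,\tilde S_i)$. By definition, $\Ext^i(\cO,\cM)$ is the $i$-th cohomology of the resulting complex
$$
\Hom_\cO(\cP_0,\cM) \to \Hom_\cO(\cP_1,\cM).
$$

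Next, using the coskyscraper adjointness \eqref{eq_cosky_adjointness}, I would identify each term. For the $\cP_0$ term, $\Hom_\cO(\CoSky(B_j,R_j),\cM) \isom \Hom_{R_j}(R_j,\cM(B_j)) \isom \cM(B_j)$, so $\Hom_\cO(\cP_0,\cM) \isom \bigoplus_{j=1}^3 \cM(B_j)$. For the $\cP_1$ term, $\Hom_\cO(\CoSky(A_i,\tilde S_i),\cM) \isom \Hom_{S_i}(\tilde S_i,\cM(A_i))$; since $\tilde S_i$ is the free rank-one $S_i$-module on the generator $(1,-1)\in S_i\oplus S_i$, this is canonically $\cM(A_i)$, and so $\Hom_\cO(\cP_1,\cM) \isom \bigoplus_{i=1}^2 \cM(A_i)$.

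The step that requires the most care is verifying that the map in the resulting complex is exactly the map $u$ of Definition~\ref{de_cohomology}, including the sign convention coming from the generator $(1,-1)$. I would trace through the maps in the proof of Lemma~\ref{le_pro_O}: the inclusion $\cP_1 \to \cP_0$ at an object $A_i$ sends the generator $(1,-1)\in \tilde S_i$ to the pair of images in $(S_i\oplus S_i)$ coming from the two edges $A_i\to B_j$ incident to $A_i$ (which for $A_1$ are $B_1$ and $B_3$, and for $A_2$ are $B_2$ and $B_3$). Dualizing via $\Hom_\cO(-,\cM)$, the induced map sends $(m_1,m_2,m_3)\in\bigoplus_j\cM(B_j)$ to the element whose $i$-th component is $\cM(A_i,B_i)m_i - \cM(A_i,B_3)m_3$, which is exactly $u$. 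Finally, since the projective resolution has length one, $\Ext^i(\cO,\cM) = 0$ for $i \ge 2$, while the $H^0$ defined in Definition~\ref{de_cohomology} is visibly $\Gamma(\cM) = \Hom_\cO(\cO,\cM)$ and $H^1$ is the cokernel of $u$; by what we have shown, both groups coincide with $\Ext^i(\cO,\cM)$, completing the proof.
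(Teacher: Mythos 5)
Your proposal is correct and follows essentially the same route as the paper: apply $\Hom_\cO(\,\cdot\,,\cM)$ to the length-one projective resolution of Lemma~\ref{le_pro_O}, use the coskyscraper adjointness \eqref{eq_cosky_adjointness} together with the identification $\Hom_{S_i}(\tilde S_i,N_i)\isom N_i$ via the generator $(1,-1)$, and check that the resulting map is the map $u$ of Definition~\ref{de_cohomology} with the correct signs. Your explicit remark that the length-one resolution forces $\Ext^i(\cO,\cM)=0$ for $i\ge 2$ is a small point the paper leaves implicit, but the argument is the same.
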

\begin{proof}
By \eqref{eq_cosky_adjointness}, we see that applying $\Hom(\,\cdot\,,\cM)$
to \eqref{eq_proj_res_O} we get that the groups $\Ext^i(\cO,\cM)$
for $i=0,1$ are, respectively, the kernel and cokernel of the map
$$
\bigoplus_{j=1}^3 \Hom_{R_j}(R_j,M_j) \to 
\bigoplus_{i=1}^2 \Hom_{S_i}(\tilde S_i,N_i) \ .
$$
Since $\tilde S_i=(1,-1)S_i$, we may replace $\tilde S_i$ with $S_i$ in the
above map and view it as the map
$$
\bigoplus_{j=1}^3 \Hom_{R_j}(R_j,M_j) \to 
\bigoplus_{i=1}^2 \Hom_{S_i}(S_i,N_i) 
$$
where we negate the natural maps for $j=3$ and $i=1,2$.  Lastly, for
any $R$-module $M$ over a ring $R$, we may identify
$\Hom_R(R,M)$ with $M$ via $\phi\in \Hom_R(R,M)$ maps to $\phi 1\in M$.
Hence the groups $\Ext^i(\cO,\cM)$ can be computed as the kernel and cokernel
of 
$$
\bigoplus_{j=1}^3 M_j \to 
\bigoplus_{i=1}^2 N_i
$$
with the above sign conventions; as such, these are the same groups as the
$H^i(\cM)$ in Definition~\ref{de_cohomology}.
\end{proof}

\subsection{Injective Resolutions of $\cM_{k,r,\ourv d}$}

We can similarly use skyscraper sheaves to build injective resolutions.
In the case of $\cM_{k,r,\ourv d}$ (or $\cO_{k,r}$), we can use
the injective resolutions of $k[x_1,1/x_1]$-modules
$$
k[x_1,1/x_1]\to k(x_1)\to k(x_1)/k[x_1,1/x_1]
$$
and similar resolutions to build injective resolutions of $\cM_{k,r,\ourv d}$.

In this article we prefer to use projective resolutions for all of our
specific computations.

\section{Computation of $\Hom(\cM_{\ourv d},\cM_{\ourv d'})$}
\label{se_global_hom}

\begin{remark}
{\red
A previous version of the articles erroneously claimed that $\Hom(\cM_{\ourv d},\cM_{\ourv d'}) \isom \Gamma(\cM_{k,r,\ourv d'-\ourv d})$. This is in general not the case.
}
\end{remark}

For the theorem will need the following result.

\begin{theorem}\label{th_hom_md}
Let $k$ be a field, $r\ge 1$ be an integer, and
$\ourv d,\ourv d'\in\integers^2$.
Then there is a canonical morphism
{\red (not an isomorphism)}
\begin{equation}\label{eq_hom_md}
\Hom\bigl(\cM_{k,r,\ourv d},\cM_{k,r,\ourv d'}\bigr) \to 
\Gamma(\cM_{k,r,\ourv d'-\ourv d}).
\end{equation} 
\end{theorem}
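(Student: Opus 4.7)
My plan is to construct the map $\Phi$ in \eqref{eq_hom_md} explicitly and verify bijectivity by direct computation. For $\phi\from\cM_{\ourv d}\to\cM_{\ourv d'}$, at each $P\in\{A_1,A_2,B_1,B_2\}$ the source $\cM_{\ourv d}(P)$ is free of rank one over $\cO_{k,r}(P)$, so $\phi(P)$ is multiplication by a unique $\sigma(P)\in\cO_{k,r}(P)$; at $P=B_3$ I set $\sigma(B_3)\eqdef\phi(B_3)(e_1)\in\cM_{\ourv d'}(B_3)$. Since $\cM_{\ourv d'-\ourv d}$ and $\cM_{\ourv d'}$ share the same underlying vector spaces at each object, $\sigma=(\sigma(P))_P$ lives in the right vector spaces, and the compatibility of $\phi$ with the restriction maps of $\cM_{\ourv d},\cM_{\ourv d'}$ translates directly into the compatibility of $\sigma$ with the (differently twisted) restriction maps of $\cM_{\ourv d'-\ourv d}$. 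In the special case $\ourv d=\ourv 0$, this $\Phi$ coincides with the pullback by the natural inclusion $\iota\from\cO_{k,r}\to\cM_{k,r,\ourv 0}$ sending $1\mapsto e_1$ at $B_3$, as outlined in the excerpt.

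Injectivity should be fast: the restriction maps of $\cM_{\ourv d'}$ are all injective (visible from Definition~\ref{de_partial_line}), so $\phi(B_i)$ is forced by $\phi(A_i)$ for $i=1,2$ via the edges $A_i\to B_i$, and $\phi(B_3)(e_j)$ is forced by $\phi(A_1)$ via the injection $\cM_{\ourv d'}(A_1,B_3)$ together with the identity $\phi(A_1)\bigl(\cM_{\ourv d}(A_1,B_3)e_j\bigr)=\sigma(A_1)\cdot x_1^{j-1}$; thus $\phi$ is recovered uniquely from $\sigma$, and $\sigma=0$ forces $\phi=0$.

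Surjectivity is where I expect the main difficulty. Given $\sigma\in\Gamma(\cM_{\ourv d'-\ourv d})$, I plan to prescribe $\phi$ at $P\ne B_3$ as multiplication by $\sigma(P)$, and to define $\phi(B_3)(e_j)$ for $j=1,\dots,r$ via the unique residue-class-modulo-$r$ decomposition of $\sigma(A_1)\cdot x_1^{j-1}\in k[x_1,1/x_1]$ under the substitution $v\mapsto x_1^r$. Compatibility at $A_1$ will hold by construction; the nontrivial step is to verify the analogous compatibility at $A_2$, i.e.\ to check that the same $r\times r$ matrix defined from $\sigma(A_1)$ also realizes $\sigma(A_2)\cdot x_2^{j-1}$ under the decomposition $v\mapsto x_2^{-r}$. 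This compatibility between the two decompositions is precisely what the global-section condition on $\sigma$ encodes, since it ties $\sigma(A_1)$ and $\sigma(A_2)$ together through their common preimage $\sigma(B_3)\in k[v,1/v]^{\oplus r}$; it also uses the specific definition of $\cO_{k,r}$ by the ring maps $v\mapsto x_1^r$ and $v\mapsto x_2^{-r}$. For arbitrary $\ourv d$, I would run the same argument verbatim after accounting for the twist $\cM_{\ourv d}(A_i,B_i)(1)=x_i^{d_i}$, or alternatively reduce to $\ourv d=\ourv 0$ using an appropriate twisted inclusion in place of $\iota$.
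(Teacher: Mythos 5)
Your overall architecture is the same as the paper's: encode $\phi$ by its value at $A_1$ (where both sheaves are free of rank one over $\cO_{k,r}(A_1)$), and recover the rest from the facts that the restriction maps out of $B_3$ are isomorphisms and those out of $B_1,B_2$ are injections; your injectivity argument is fine and matches the paper's. The gap is exactly at the step you flag as the main difficulty: the assertion that compatibility at $A_2$ ``is precisely what the global-section condition on $\sigma$ encodes'' is not correct for the $\phi(B_3)$ your recipe produces. Writing $\sigma(A_1)=\sum_{j=1}^r p_j(x_1^r)\,x_1^{j-1}$, your residue-class decomposition forces $\phi(B_3)=\sum_j p_j(v)\,T_1^{j-1}$, where $T_1$ is the $k[v,1/v]$-linear twisted shift $e_j\mapsto e_{j+1}$ ($j<r$), $e_r\mapsto v\,e_1$, i.e.\ the operator corresponding to multiplication by $x_1$ under $\cM(A_1,B_3)$, with $T_1^r=v$. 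Under $\cM(A_2,B_3)$, multiplication by $x_2$ corresponds to the \emph{other} twisted shift $T_2$ (with $e_r\mapsto v^{-1}e_1$, $T_2^r=v^{-1}$), and $T_1,T_2$ do not commute for $r\ge 2$; consequently the map induced at $A_2$ by $\phi(B_3)$ commutes with multiplication by $x_2^{\pm r}$ but is multiplication by an element of $k[x_2,1/x_2]$ only when $p_2=\cdots=p_r=0$. Nothing in the global-section condition on $\sigma$ forces this.

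A concrete failure: take $r=2$, $\ourv d=\ourv 0$, $\ourv d'=(1,1)$, and the global section $\sigma$ of $\cM_{k,r,\ourv d'-\ourv d}$ with $\sigma(B_3)=(0,1)$, so $\sigma(A_1)=x_1$, $\sigma(A_2)=x_2$, $\sigma(B_1)=\sigma(B_2)=1$. Your construction gives $\phi(B_3)e_1=e_2$ and $\phi(B_3)e_2=v\,e_1$; then $\cM_{\ourv d'}(A_2,B_3)\bigl(\phi(B_3)e_2\bigr)=x_2^{-2}$, whereas $\sigma(A_2)\cdot\cM_{\ourv d}(A_2,B_3)(e_2)=x_2\cdot x_2=x_2^{2}$, so the square at $A_2$ does not commute. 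Moreover no repair is possible: $\phi(B_3)$ is forced by $\phi(A_1)$ (the $A_1$-restrictions are isomorphisms), and the forced map at $A_2$ sends $1\mapsto x_2$ but $x_2\mapsto x_2^{-2}$, hence is not $\cO(A_2)$-linear; so no $\phi\in\Hom_{\cO_{k,r}}(\cM_{\ourv 0},\cM_{(1,1)})$ maps to this $\sigma$, and the image of your $\Phi$ consists only of sections whose $B_3$-value lies in $k[v,1/v]\,e_1$. Be aware that the paper's own proof disposes of this point in one sentence (``this uniquely determines $\phi(B_3)$ and $\phi(A_2)$''), i.e.\ at precisely the spot you isolate, so it cannot be treated as a routine verification. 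Note also that your argument does go through verbatim when the source is the line bundle $\cL_{k,r,\ourv d}$ of Definition~\ref{de_line_L}: there the value at $B_3$ is free of rank one, so $\phi(B_3)(1)$ may be an arbitrary $r$-tuple and no matrix is forced; what your construction then proves is an isomorphism $\Hom_{\cO_{k,r}}\bigl(\cL_{k,r,\ourv d},\cM_{k,r,\ourv d'}\bigr)\isom\Gamma\bigl(\cM_{k,r,\ourv d'-\ourv d}\bigr)$, which is the statement your surjectivity argument actually supports.
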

In Section~\ref{se_simpleV} we shall define {\em sheaf Hom}; it will
be clear that the proof we now give of the above theorem is based on
{\em local} considerations, and more generally gives a morphism
$$
\SHom\bigl(\cM_{\ourv d},\cM_{\ourv d'}\bigr) \rightarrow \cM_{\ourv d'-\ourv d} .
$$
In Section~\ref{se_simpleV} we shall
also discuss the line bundles $\cL_{k,r,\ourv d}$ defined in
Definition~\ref{de_line_L}; the formulas
$$
\cL_{k,r,\ourv d}\otimes\cM_{k,r,\ourv d'} \isom
\cM_{k,r,\ourv d + \ourv d'}
$$
to conceptually 
simplify matters a bit; indeed, such formulas imply that we need only
consider the case $\ourv d=\ourv 0$
in Theorem~\ref{th_hom_md}.

{\red The morphism \eqref{eq_hom_md} can also be obtained from
the morphism $\cL_{k,r,\ourv d}\to\cM_{k,r,\ourv d}$; the
above theorem is much easier since we are only claiming
the existence of a morphism.}

\begin{proof}
Let $S_1=\cO(A_1)=k[x_1,1/x_1]$, and consider the map
taking 
$$
\phi\in \Hom_{\cO_{k,r}}\bigl(\cM_{k,r,\ourv d},\cM_{k,r,\ourv d'}\bigr)
$$
to 
$$
\phi(A_1)\in
\Hom_{\cO(A_1)}\bigl(\cM_{k,r,\ourv d}(A_1),\cM_{k,r,\ourv d'}(A_1)\bigr)
=
\Hom_{S_1}(S_1,S_1).
$$
Identifying $\Hom_{S_1}(S_1,S_1)$ with $S_1$ in the usual fashion, we get
a canonical morphism
$$
u\from 
\Hom_{\cO_{k,r}}\bigl(\cM_{k,r,\ourv d},\cM_{k,r,\ourv d'}\bigr) \to S_1.
$$
We similarly define a map
$$
v\from 
\Hom_{\cO_{k,r}}\bigl(\cO_{k,r,\ourv 0},\cM_{k,r,\ourv d'-\ourv d}\bigr) 
\to S_1
$$
(since $\cO_{k,r}(A_1)=S_1$).
Clearly $u,v$ are $k$-linear maps.
To prove the theorem it suffices to show that 
\begin{enumerate}
\item 
$u$ and $v$ are injections, and
\item 
the image of $u$ lies in $v$
({\red it was previously claimed that the image of $u$ was equal to
that of $v$}).
\end{enumerate}

To show that $u$ is an injection, say that $u\phi=0$, i.e.,
$\phi(A_1)=0$
Since the restriction maps
$$
\cM_{k,r,\ourv d}(A_1,B_3), \quad \cM_{k,r,\ourv d'}(A_1,B_3)
$$
are isomorphisms, it follows $\phi(B_3)=0$.  Since the restriction maps
$$
\cM_{k,r,\ourv d}(A_2,B_3), \quad \cM_{k,r,\ourv d'}(A_2,B_3)
$$
are isomorphisms, it follows $\phi(A_2)=0$.
Since the restriction maps
$$
\cM_{k,r,\ourv d}(A_1,B_1), \quad \cM_{k,r,\ourv d'}(A_1,B_1)
$$
are injections, it follows that $\phi(B_1)=0$; similarly $\phi(B_2)=0$.
Hence $\phi$ is zero at $A_1,A_2,B_1,B_2,B_3$.

To show that $v$ is an injection, say that $v\psi=0$.  Since
$\cM_{k,r,\ourv d'-\ourv d}(A_1,B_3)$ is an isomorphism and
$\cO_{k,r}(A_1,B_3)$ is an injection, it follows that $\psi(B_3)=0$.
It follows that $\psi(A_2)$ maps
the image of $1\in\cO(A_2,B_3)$ to zero; 
since $\psi$ is a map of $\cO_{r,K}$ modules,
$\psi(A_2)$ is a map of $\cO_{r,K}(A_2)$-modules taking $1$ to zero,
and hence $\psi(A_2)=0$.
Similar to the last paragraph, we then have $\psi$ is zero at $B_1$
and $B_2$, and hence $\psi=0$.

Now consider the image of $u$.
Say that $u\phi=\gamma\in S_1$; then we may uniquely write
$\gamma$ as
\begin{equation}\label{eq_phi_at_A_1}
\phi(A_1)=\gamma=\sum_{i=1}^r p_i\bigl(x_1^r\bigr) x_1^i.
\end{equation} 
This uniquely determines $\phi(B_3)$ and $\phi(A_2)$ to be
\begin{equation}\label{eq_phi_at_A_2}
\phi(A_2)=\gamma'=\sum_{i=1}^r p_i\bigl( x_2^{-r} \bigr) x_2^i
\end{equation} 
{\red assuming they exist; the problem is that there are
conditions on the $p_i$ above for $\phi(B_3)$ to exist as a morphism
that intertwines the
$\cO(B_3)=k[v,1/v]$ action on $\cM_{k,r,\ourv d}(B_3)$
with the
$\cO(A_2)=k[x_2,1/x_2]$ action on the $A_2$ values; it is only
for $r=1$ that there are no extra conditions.
}
We see that the condition that
$\phi(A_1)$ extends to $\phi(B_1)$ is that
that $\gamma$ have no terms in $x_1$ of degree $d_1'-d_1'$;
similarly for $\gamma'$, $x_2$, and $d_2'-d_2$.

Now consider the image of $v$.
If $v\psi=\psi(A_1)=\gamma_1$, then $\gamma_1$ determines
the image of $1\in\cO_{k,r}(A_2)$ in 
$\cM_{k,r,\ourv d'-\ourv d}$ under $\psi(A_2)$, which gives
forces the relations \eqref{eq_phi_at_A_1} and \eqref{eq_phi_at_A_2}
with $\psi$ replacing $\phi$ in both equations.
The conditions that the value of $\psi(A_i)$ extend to $\psi(B_i)$
give the same degree conditions.

Hence {\red any point in the image of $u$ lies in the image of $v$.}
\end{proof}

It should be clear that the above argument is ``local'' in the sense
that we have studied a global section by studying what happens at $A_1$,
and then at $A_2$ by considering the ``neighbourhood'' $\{B_3,A_1,A_2\}$
of $B_3$, and then studying the neighbourhoods $\{B_1,A_1\}$ and
$\{B_2,A_2\}$.  For this reason the above proof really shows that
the relation \eqref{eq_hom_md} is really a ``local'' 
morphism {\red (not generally an isomorphsim)}
$$
\SHom\bigl(\cM_{\ourv d},\cM_{\ourv d'}\bigr) \rightarrow \cM_{\ourv d'-\ourv d}$$
as we will describe in Section~\ref{se_simpleV}
(where we formalize the $\SHom$ above as {\em sheaf Hom}).

\section{Proof of the Duality Theorems}
\label{se_duality_proof}

In this section we 
{\red 
show that skyscrapers satisfiy a strong duality property;
we cannot expect that this duality holds for $\omega=\cM_{k,r,\ourv K_r}$
or any other $\cM_{k,r,\ourv d}$ when $r\ge 2$;
the older version claimed strong duality holds for the $\cM_{k,r,\ourv d}$.}

\begin{definition}\label{de_strong_duality}
Let $k$ be a field, and $\cO$ a sheaf of $k$-algebras on a bipartite
category $\cC$.
Let $\omega$ be a sheaf of $k$-on a bipartite category $\cC$ and fix
a morphism
$\phi\from H^1(\omega)\to k$ of $k$-vector spaces.
For each sheaf $\cF$ of $\cO$-algebras we get a pairing
\begin{equation}\label{eq_duality_omega}
H^i(\cF)\times \Ext^{1-i}(\cF,\omega) \to H^1(\omega) 
\xrightarrow{\phi} k.
\end{equation}
For any $i$ we say that {\em $H^i$-duality} holds for $\cF$ 
with respect to $\phi,\omega$
if \eqref{eq_duality_omega} is a perfect paring; for any $i$ we use
${\rm Duality}_{\cO,i}(\phi,\omega)$ to denote the class of sheaves
satisfying $H^i$-duality;
we say that {\em strong duality} holds for $\cF$ 
with respect to $\phi,\omega$
if \eqref{eq_duality_omega} 
\begin{enumerate}
\item is a perfect pairing
for $\cF$ and both $i=0,1$, and
\item $H^i(\cF)$ and $\Ext^i(\cF,\omega)$ vanish for $i\ge 2$;
\end{enumerate}
we use ${\rm Strong-Duality}_\cO(\phi,\omega)$ to 
denote the set of sheaves, $\cF$,
for which strong duality holds.
If $H^1(\omega)\isom k$, then the class of sheaves for which any of
the above notions of duality holds 
is the same
for all isomorphisms $\phi$, and we drop $\phi$ in the above notation
(and assume that $\phi$ is any isomorphism).
\end{definition}
Of course, the zero sheaf always lies in ${\rm Duality}_\cO(\phi,\omega)$.
We also drop the $\cO$ from the notation if $\cO$ is understood.

Now we state the main theorem in this section; before doing so we remark
that the cokernel of
$$
k[y_1] \xrightarrow{p(y_1)\mapsto y p(y_1) } k[y_1]
$$
is an $k[y_1]$-algebra $\tilde k=k[y_1]/y_1 k[y_1]$, which is as a set
is isomorphic to $k$ in on which $k[y_1]$ acts by the rule
$a_0+ya_1+\cdots+y^m a_m$ times $b$ is taken to $a_0 b$
(for $a_0,\ldots,a_m,b\in k$).
We may write $k$ for $\tilde k$, but we often write $\tilde k$ to
remind ourselves of this particular $k[y_1]$-module structure;
similarly for $2$ replacing $1$ in the subscripts.

These are the main duality theorems that we prove in this article.

\begin{theorem}\label{th_strong_duality}
Let $k$ be a field and $r\ge 1$ an integer.
Then $\omega=\omega_{k,r}=\cM_{k,r,K_r}$ has $H^1(\omega)\isom k$
and ${\rm Strong-Duality}(\omega)$ contains
\begin{enumerate}
\item the skyscraper sheaf $\Sky(B_1,\tilde k)$, where 
$\tilde k=k[y_1]/(y_1 k[y_1])$; and
\item the skyscraper sheaf $\Sky(B_2,k[y_2]/y_2 k[y_2])$.
\end{enumerate}
\end{theorem}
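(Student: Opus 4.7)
My plan is as follows. The $H^1(\omega_{k,r}) \cong k$ claim will be handled first, then the skyscraper case $\cF = \Sky(B_1,\tilde k)$; the sheaf $\Sky(B_2, k[y_2]/y_2 k[y_2])$ follows by the symmetry exchanging $B_1 \leftrightarrow B_2$, $(1,0) \leftrightarrow (0,1)$, and $y_1 \leftrightarrow y_2$. For $r \geq 2$, Corollary~\ref{co_important_first_Betti} gives $b^1(\omega_{k,r}) = \max(0, r-1 - (r-2)) = 1$ directly; the case $r=1$, where $K_r = (-1,-1)$ has negative components, follows from $b^0(\omega_{k,1}) = 0$ (by Theorem~\ref{th_main_grrr_model}, since a negative-degree divisor is not equivalent to an effective one) together with $\chi(\omega_{k,1}) = -1$ from Theorem~\ref{th_Euler_char}.

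Next I will compute the two sides of the duality pairing for $\cF = \Sky(B_1,\tilde k)$. The cohomology is immediate from Definition~\ref{de_cohomology}: since $\cF$ is supported only at $B_1$, the complex \eqref{eq_cohomology} reduces to $\tilde k \to 0$, yielding $H^0(\cF) \cong \tilde k \cong k$, $H^1(\cF) = 0$, and $H^i(\cF) = 0$ for $i \geq 2$; the $H^1$--$\Hom$ pairing of strong duality is therefore trivially perfect. For the $\Ext$ groups, I will exploit that the restriction map $R_1 = k[y_1] \to S_1 = k[x_1,1/x_1]$ sends $y_1$ to the unit $1/x_1$: consequently, multiplication by $y_1$ on the coskyscraper $\CoSky(B_1,k[y_1])$ is an isomorphism at $A_1$ and is injective at $B_1$ with cokernel $\tilde k$. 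This produces the length-two projective resolution
\[
0 \to \CoSky(B_1,k[y_1]) \xrightarrow{\,\cdot y_1\,} \CoSky(B_1,k[y_1]) \to \Sky(B_1,\tilde k) \to 0.
\]
Applying $\Hom_{\cO_{k,r}}(-,\omega)$ and the adjunction \eqref{eq_cosky_adjointness} (using $\omega(B_1) = k[y_1]$) reduces the $\Ext$ computation to the complex $k[y_1] \xrightarrow{\,\cdot y_1\,} k[y_1]$; its kernel is zero and its cokernel is $\tilde k \cong k$. Hence $\Ext^0(\cF,\omega) = 0$, $\Ext^1(\cF,\omega) \cong k$, and $\Ext^i(\cF,\omega) = 0$ for $i \geq 2$, so the vanishing required by clause~(2) of Definition~\ref{de_strong_duality} holds.

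The real obstacle is verifying that the pairing $H^0(\cF) \times \Ext^1(\cF,\omega) \to H^1(\omega)$ between two one-dimensional $k$-vector spaces is nondegenerate; for this it suffices to produce any pair with nonzero product. I will take $\beta \in \Ext^1(\cF,\omega)$ to be the class of the short exact sequence \eqref{eq_basic_exact} with $\ourv d = K_r$, namely
\[
0 \to \omega \to \cM_{k,r,K_r+(1,0)} \to \Sky(B_1,\tilde k) \to 0.
\]
By the naturality clause of Lemma~\ref{le_Yoneda_prod}, the map $\alpha \mapsto \alpha \cdot \beta$ from $H^0(\cF) = \Ext^0(\cO_{k,r},\cF)$ to $H^1(\omega) = \Ext^1(\cO_{k,r},\omega)$ is precisely the connecting homomorphism $\delta$ of the long exact sequence obtained from applying $\Hom(\cO_{k,r},-)$. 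A direct computation with Theorems~\ref{th_main_grrr_model},~\ref{th_GRRR_formula}, and~\ref{th_Euler_char} shows that $b^1(\cM_{k,r,K_r+(1,0)}) = 0$ for every $r \geq 1$; exactness of
\[
\cdots \to H^0(\cF) \xrightarrow{\delta} H^1(\omega) \to H^1(\cM_{k,r,K_r+(1,0)}) = 0
\]
then forces $\delta$ to be a surjection between one-dimensional spaces, hence an isomorphism. The Yoneda pairing with $\beta$ is therefore an isomorphism, $\beta$ generates $\Ext^1(\cF,\omega)$, and the pairing is perfect, completing strong duality for $\Sky(B_1,\tilde k)$.
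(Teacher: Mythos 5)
Your proposal is correct, and most of its ingredients coincide with the paper's proof: the same short exact sequence $0\to\omega\to\cM_{k,r,K_r+(1,0)}\to\Sky(B_1,\tilde k)\to 0$ and the same coskyscraper resolution $0\to\CoSky(B_1,k[y_1])\xrightarrow{\,\cdot y_1\,}\CoSky(B_1,k[y_1])\to\Sky(B_1,\tilde k)\to 0$, yielding $\Hom(\Sky(B_1,\tilde k),\omega)=0$, $\Ext^1(\Sky(B_1,\tilde k),\omega)\isom k$, and the vanishing in degrees $\ge 2$. Where you genuinely diverge is the nondegeneracy of the one-dimensional $H^0\times\Ext^1$ pairing: the paper runs the full commutative ladder between the cohomology long exact sequence and the dual of the $\Ext(\,\cdot\,,\omega)$ sequence, which requires computing $\Hom(\cM_{K_r},\omega)\isom\Gamma(\cM_{\ourv 0})\isom k$ and $\Hom(\cM_{K_r+(1,0)},\omega)=0$ via Theorem~\ref{th_hom_md} and showing the middle vertical arrow is an isomorphism using Lemma~\ref{le_Yoneda_prod}(2); you instead identify pairing with the extension class $\beta$ as the connecting map $\delta\from H^0(\Sky(B_1,\tilde k))\to H^1(\omega)$, whose surjectivity needs only $H^1(\cM_{K_r+(1,0)})=0$. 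Your route is leaner (it bypasses Theorem~\ref{th_hom_md} entirely), at the price of the identification ``cup with the extension class equals the connecting homomorphism,'' which Lemma~\ref{le_Yoneda_prod} does not state verbatim and which you attribute somewhat vaguely to ``naturality''; to make this airtight in the paper's framework, define $\beta$ as the image of $\id_\omega$ under the connecting map $\Hom(\omega,\omega)\to\Ext^1(\Sky(B_1,\tilde k),\omega)$ and deduce $\alpha\cdot\beta=\delta(\alpha)$ by evaluating the chain-map compatibility of Lemma~\ref{le_Yoneda_prod}(3) (with $k=1$) at $\id_\omega$ together with item~(2) --- essentially the same squares the paper chases, evaluated at a single element. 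Two minor points: your remark that the $H^1$-pairing is ``therefore trivially perfect'' is premature until $\Hom(\Sky(B_1,\tilde k),\omega)=0$ is known, but your resolution computation supplies exactly that a paragraph later; and your explicit handling of $r=1$ (both for $H^1(\omega)\isom k$ and for $b^1(\cM_{k,1,(0,-1)})=0$, cases outside the hypotheses of Corollary~\ref{co_important_first_Betti}) is actually more careful than the paper's citation.
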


{\color{red}
An earlier version claimed that $\omega$ satisfies $H^1$-duality.
}

We have already computed $H^1(\omega_{k,r})\isom k$; hence to prove
the theorem above it suffices to verify the duality statements.
We shall divide this proof into several subsections.

\subsection{Proof of Theorem~\ref{th_strong_duality}}
\label{su_proof_strong_duality}

\begin{proof}[Proof of Theorem~\ref{th_strong_duality}]
By symmetry it suffices to verify the case $i=1$, i.e., that
$$
\cS_1 = \Sky(B_1,k[y_1]/y_1 k[y_1]) \in {\rm Duality}(\omega_{k,r}) \ .
$$

To verify that \eqref{eq_duality_omega} holds for $\cF=\cS_1$ and
$i=1$ is easy: since $\cS_1$ is a skyscraper sheaf, $H^1(\cS_1)=0$.
Furthermore any $\gamma\in\Hom(\cS_1,\omega)$ is determined by its
only possible nonzero map $\gamma(B_1)\from \cS_1(B_1)\to\omega(B_1)$;
but this map must be zero, since $\omega(A_1,B_1)$ is an injection
and $\cS_1(A_1,B_1)$ is the zero map.

This shows that \eqref{eq_duality_omega} holds for $\cF=\cS_1$ and
$i=1$; Lemma~\ref{le_sky_Betti} also shows that $H^i(\cS_1)=0$ for
$i\ge 2$ (which is true where $\cS_1$ is replaced with any sheaf
of $k$-vector spaces).
Hence it
remains to verify that \eqref{eq_duality_omega} holds for $\cF=\cS_1$ and
$i=0$,
and to verify that $\Ext^i(\cF,\omega_{k,r})=0$ for $i\ge 2$.

In the short exact sequence of $k$-vector spaces \eqref{eq_basic_exact},
the source and target of $\mu$ are $\cO$-modules, and hence this
becomes a short exact sequence of $\cO$-modules
\begin{equation}\label{eq_basic_exact_O_mods}
0 \to \cM_{\ourv d} \xrightarrow{\mu}
 \cM_{\ourv d+(1,0)} \to \Sky(B_1,\tilde k) \to 0  
\end{equation}
with $\tilde k = k[y_1]/y_1 k[y_1]$.
Now take $\ourv d=\omega_{k,r}$, so that we get an exact sequence
$$
\begin{tikzcd}
H^0({\rm Sky}(B_1,k)) \arrow[r] \arrow[d] 
& H^1(\cM_{r;K}) \arrow[r] \arrow[d]
& H^1(\cM_{r;K+(1,0)}) \arrow[d] \\
\Ext^1({\rm Sky}(B_1,k),\omega_r)^* \arrow[r]
& \Hom(\cM_{r;K},\omega_r)^* \arrow[r] 
& \Hom(\cM_{r;K+(1,0)},\omega_r)^*
\end{tikzcd}
$$
Using Theorem~\ref{th_hom_md}, we now observe, that
\begin{enumerate}
\item
$\Hom(\cM_{r;K},\omega_r) = \Hom(\cM_{r;K},\cM_{r;K})\isom
\Gamma(\cM_{k,r,\ourv 0})$ which is one-dimensional;
\item
similarly 
$\Hom(\cM_{r;K}+(1,0),\omega_r) \isom
\Gamma(\cM_{k,r,(-1,0)})$ which vanishes;
\item
by Corollary~\ref{co_important_first_Betti},
$H^1(\cM_{k,r,K_r})\isom k$ and $H^1(\cM_{k,r,K_r+(1,0)})=0$;
\item
the middle downward arrow $H^1(\cM_{k,r,K_r}) \to \Hom(\cM_{r;K},\omega_r)^*$
is an isomorphism, since both these spaces are one-dimensional
and the identity in 
$\Hom(\cM_{k,r,K_r},\omega_r)=\Hom(\omega_{k,r},\omega_{k,r})$
induces, via the pairing \eqref{eq_Yoneda_pairing},
the identity map $H^1(\omega_{k,r})\to H^1(\omega_{k,r})$.
\end{enumerate}
Hence the above diagram amounts to
\begin{equation}\label{eq_sky_almost_done}
\begin{tikzcd}
H^0({\rm Sky}(B_1,\tilde k)) \arrow[r] \arrow[d] 
& k \arrow[r] \arrow[d,"\isom"]
& 0 \arrow[d] \\
\Ext^1({\rm Sky}(B_1,\tilde k),\omega_r)^* \arrow[r]
& k \arrow[r] 
& 0
\end{tikzcd}
\end{equation} 
Furthermore $H^0({\rm Sky}(B_1,\tilde k))\isom \tilde k$ by 
Lemma~\ref{le_sky_Betti}.  
Finally, consider the short exact sequence 
\begin{equation}\label{eq_cosky_res_Sone}
0 \to \CoSky(B_1,k[y_1])\xrightarrow{\mbox{mult by $y_1$ at $A_1,B_1$}}
\CoSky(B_1,k[y_1]) \to \Sky(B_1,\tilde k) \to 0 \ 
\end{equation} 
obtained from \eqref{eq_basic_exact} by setting all values at $A_2,B_2,B_3$
(and appropriate restriction maps) to zero.  
Setting $\cF=\CoSky(B_1,k[y_1])$ we get a long exact sequence
$$
0\to \Ext^0(\cS_1,\omega) 
\to \Ext^0(\cF,\omega) 
\to \Ext^0(\cF,\omega) 
\to \Ext^1(\cS_1,\omega) \to 0
$$
since $\Ext^1(\cF,\omega)=0$ since $\cF$ is a projective $\cO_{k,r}$-module.
We have seen above that $\Ext^0(\cS_1,\omega)=\Hom(\cS_1,\omega)=0$,
and \eqref{eq_sky_adjointness} implies that
$$
\Ext^0(\cF,\omega)=\Hom(\cF,\omega)=\Hom_{k[y_1]}(k[y_1],\omega(B_1))
\isom \omega(B_1)=k[y_1],
$$
so the long exact sequence becomes
$$
0\to k[y_1] \xrightarrow{\phi} k[y_1] \to \Ext^1(\cS_1,\omega)\to 0,
$$
and the functoriality of the coskyscraper functor shows that $\phi$
is just multiplication by $y_1$.  It follows that 
$$
\Ext^1(\cS_1,\omega) \isom k[y_1]/y_1 k[y_1],
$$
which is one dimensional.  Hence \eqref{eq_sky_almost_done} becomes
$$
\begin{tikzcd}
H^0({\rm Sky}(B_1,\tilde k))\isom k \arrow[r] \arrow[d] 
& k \arrow[r] \arrow[d,"\isom"]
& 0 \arrow[d] \\
\Ext^1({\rm Sky}(B_1,\tilde k),\omega_r)^* \isom k\arrow[r]
& k \arrow[r] 
& 0
\end{tikzcd} \ .
$$
The exactness in the rows show that the upper left and lower left 
horizontal arrows are isomorphisms, and hence and leftmost downward
arrow is an isomorphism.

This verifies \eqref{eq_duality_omega}
for $\cF=\cS_1$ and
$i=0$.
But \eqref{eq_cosky_res_Sone} shows that $\cS_1$ has a projective
resolution of length two, and hence
$\Ext^i(\cS_1,\omega_{k,r})=0$ for $i\ge 2$.
\end{proof}

\subsection{A Lemma Related to the Five-Lemma}

\ %

{\red 
This lemma was used to study the strong duality of $\omega$;
although this strong duality does not hold for $r\ge 2$, we anticipate
that this lemma may be useful in fixing the duality result in
future work.
}

\begin{lemma}\label{le_pseudo_five}
Let
$$
\begin{tikzcd}
A_3 \arrow[r] \arrow[d] &
A_2 \arrow[r] \arrow[d] &
A_1 \arrow[r] \arrow[d] &
0 \\
B_3 \arrow[r] &
B_2 \arrow[r] &
B_1 \arrow[r] &
0\\
\end{tikzcd}
$$
be a morphism of exact chains of $k$-vector spaces
such that the maps $A_3\to B_3$ and
$A_1\to B_1$ are surjective.  Then the map
$A_2\to B_2$ is surjective.
\end{lemma}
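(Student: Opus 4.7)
The plan is a standard diagram chase, using only the exactness of the two rows together with the two given surjectivities. The key observation is that the hypothesis ``exact chain'' forces both $A_2 \to A_1$ and $B_2 \to B_1$ to be surjective (from the terminal $\to 0$), and that the exactness of $B_3\to B_2\to B_1$ lets us measure the failure of a candidate lift.

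First, I would pick an arbitrary $b_2 \in B_2$ and produce a preimage in $A_2$ in two steps. Push $b_2$ forward to $b_1 \in B_1$; lift $b_1$ to some $a_1 \in A_1$ using surjectivity of $A_1\to B_1$; then lift $a_1$ to $a_2' \in A_2$ using surjectivity of $A_2\to A_1$ (a consequence of exactness at $A_1$). Let $b_2' \in B_2$ be the image of $a_2'$. By commutativity of the right square, $b_2'$ and $b_2$ have the same image in $B_1$.

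Next I would correct the discrepancy $b_2 - b_2'$. Since it maps to $0$ in $B_1$, exactness of $B_3\to B_2\to B_1$ produces $b_3 \in B_3$ with image $b_2-b_2'$; surjectivity of $A_3\to B_3$ then yields $a_3 \in A_3$ mapping to $b_3$. Let $a_2''\in A_2$ be the image of $a_3$. By commutativity of the left square, $a_2''$ maps to $b_2 - b_2'$ in $B_2$. Therefore $a_2' + a_2''\in A_2$ maps to $b_2' + (b_2-b_2') = b_2$, proving surjectivity.

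There is no real obstacle here; the argument is purely formal and does not use the Freyd--Mitchell embedding since we are working with $k$-vector spaces, where element-chasing is legitimate. The only subtlety worth flagging is that the lemma does \emph{not} require the vertical map $A_3\to B_3$ to interact with the kernels of the horizontal maps in any refined way, because the surjectivity of $A_2\to A_1$ (obtained from the ``$\to 0$'' on the right) gives us a first approximate lift, and the remaining correction only needs surjectivity at the $B_3$ step.
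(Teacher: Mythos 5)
Your proof is correct and is essentially the same diagram chase as the paper's: lift the image in $B_1$ to $A_1$, use surjectivity of $A_2\to A_1$ (from exactness at $A_1$) to get an approximate lift, then correct the discrepancy via exactness at $B_2$ and surjectivity of $A_3\to B_3$. The only difference is a harmless sign convention in the correction step.
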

Let us make three remarks regarding this lemma.
First, the proof we give below would still work if the top and
bottom $0$'s in the diagram were replaced by $A_0$ and $B_0$ and 
a downward isomorphism $A_0\to B_0$; this generalization and its
dual proves the five-lemma.
Second, the special case where $A_2=A_3=B_3=k$ shows that one cannot
replace ``surjective'' with ``injective'' in this lemma.
Third,
any proof via ``diagram chasing''---such as the one 
below---generalizes to the same statement in any abelian category,
by the Freyd-Mitchell Embedding Theorem.
\begin{proof}[Proof of Lemma~\ref{le_pseudo_five}.]
This is an easy diagram chase: let $\beta_2\in B_2$; we need to
show that $\beta_2$ has a preimage in $A_2$.

Let $\beta_1$ be the image of $\beta_2$ in $B_1$; by surjectivity,
$\beta_1$ has a preimage $\alpha_1$ in $A_1$; by the exactness of the
rows, $\alpha_1$ has a preimage
$\alpha_2\in A_2$; choose any such $\alpha_2$, and let $\beta_2'$
be the image of $\alpha_2$.  Since $\beta_2'$ and $\beta_2$ both map to
$\beta_1$ ($\beta_2'$ by commutativity of the diagram, since
$\alpha_2$ that maps to $\alpha_1$ that maps to $\beta_2'$), we have 
that $\beta_2'-\beta_2$ has a preimage $\beta_3$ in $B_3$.  Since
$A_3\to B_3$ is surjective, $\beta_3$ has a preimage $\alpha_3$
in $A_3$, and hence if $\alpha_2'$ is the image of $\alpha_3$ in $A_2$,
then $\alpha_3$ maps to $\beta_2'-\beta_2$.  
Hence $\alpha_2-\alpha_2'$ maps to $\beta_2$ in $B_2$.
Hence $\beta_2$ has a preimage in $A_2$.
\end{proof}


\subsection{The Method of Grothendieck}

\ %

{\red We anticipate that this discussion will be useful in future
work.
}

The following is a special case of what is
sometimes called the ``method of Grothendieck:''

\begin{lemma}\label{le_method_Grothendieck}
Let $k,\cO,\cC,\omega,\phi$ be a in Definition~\ref{de_strong_duality},
and let 
$$
0\to\cF_1\to\cF_2\to\cF_3\to 0
$$
be a short exact sequence of $\cO$-modules.  
\begin{enumerate}
\item Then if any two of
$\cF_1,\cF_2,\cF_3$ lie in ${\rm Strong-Duality}(\phi,\omega)$, then all three
do; and
\item If $\cF_1,\cF_2$ lie in ${\rm Duality}_{\cO,i}(\phi,\omega)$
and $\cF_1$ lies in ${\rm Duality}_{\cO,i+1}(\phi,\omega)$ with
$H^{i+1}(\cF_1)=0$,
then $\cF_3$ lies in ${\rm Duality}_{\cO,i}(\phi,\omega)$.
\end{enumerate}
\end{lemma}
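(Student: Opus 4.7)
The plan is to compare the cohomology long exact sequence of $0\to\cF_1\to\cF_2\to\cF_3\to 0$ with the dualized Ext long exact sequence, linked column by column via the Yoneda pairing of Lemma~\ref{le_Yoneda_prod}. The cohomology long exact sequence reads
$$
\cdots \to H^i(\cF_1) \to H^i(\cF_2) \to H^i(\cF_3) \to H^{i+1}(\cF_1) \to \cdots ,
$$
while dualizing the Ext long exact sequence (contravariant in its first argument) and reindexing yields
$$
\cdots \to \Ext^{1-i}(\cF_1,\omega)^* \to \Ext^{1-i}(\cF_2,\omega)^* \to \Ext^{1-i}(\cF_3,\omega)^* \to \Ext^{-i}(\cF_1,\omega)^* \to \cdots .
$$
Composing the Yoneda pairing with $\phi\from H^1(\omega)\xrightarrow{\sim} k$ produces vertical maps $H^i(\cF_j)\to\Ext^{1-i}(\cF_j,\omega)^*$, and the naturality asserted in Lemma~\ref{le_Yoneda_prod} makes all the resulting squares commute. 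The hypotheses in each part translate into some of these verticals being isomorphisms; the rest will follow by diagram chasing.

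For part~(1), suppose two of $\cF_1,\cF_2,\cF_3$ satisfy strong duality. Then the corresponding vertical arrows are isomorphisms in both degrees $i=0,1$, and all higher Ext and cohomology on these two sheaves vanish. The five-lemma forces the verticals for the third sheaf to be isomorphisms as well. Vanishing of $H^i$ and $\Ext^i(\cdot,\omega)$ for $i\ge 2$ on the third sheaf follows from the tails of the two long exact sequences: for $i\ge 3$ the two neighbouring terms already vanish, and for $i=2$ one uses the injectivity of $H^0(\cF_1)\to H^0(\cF_2)$ (or the analogue) transported across the commutative square to deduce injectivity of $\Ext^1(\cF_1,\omega)^*\to\Ext^1(\cF_2,\omega)^*$ and hence vanishing of the preceding $\Ext^2(\cF_3,\omega)^*$.

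For part~(2), fix the degree $i$. The hypothesis that $\cF_1$ satisfies $H^{i+1}$-duality with $H^{i+1}(\cF_1)=0$ forces $\Ext^{-i}(\cF_1,\omega)^*=0$. The relevant slices of the two sequences therefore truncate to
$$
H^i(\cF_1) \to H^i(\cF_2) \to H^i(\cF_3) \to 0, \qquad
\Ext^{1-i}(\cF_1,\omega)^* \to \Ext^{1-i}(\cF_2,\omega)^* \to \Ext^{1-i}(\cF_3,\omega)^* \to 0 ,
$$
exhibiting $H^i(\cF_3)$ and $\Ext^{1-i}(\cF_3,\omega)^*$ as cokernels of the respective leftmost arrows. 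The isomorphism $H^i(\cF_2)\to\Ext^{1-i}(\cF_2,\omega)^*$, together with commutativity of the left square and the surjectivity of $H^i(\cF_1)\to\Ext^{1-i}(\cF_1,\omega)^*$, carries one image onto the other, so it descends to an isomorphism on cokernels; by commutativity this induced map is the Yoneda map for $\cF_3$, proving $H^i$-duality for $\cF_3$.

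The main obstacle is verifying that the squares joining the two long exact sequences commute at the connecting homomorphisms---that is, that the Yoneda pairing intertwines the cohomology connecting map with the dual of the Ext connecting map, with consistent signs. This is a standard (if slightly technical) consequence of Lemma~\ref{le_Yoneda_prod}(3) together with its analogue for a short exact sequence in the contravariant variable; once this compatibility is in hand, both parts of the lemma reduce to formal diagram chases, namely the five-lemma for part~(1) and a cokernel comparison for part~(2).
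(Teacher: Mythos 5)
Your proof is correct and is essentially the paper's argument: the paper likewise links the cohomology long exact sequence to the $\Ext(\cdot,\omega)$ long exact sequence via the Yoneda pairing (dualizing the cohomology row rather than the Ext row) and applies the five-lemma. Your explicit handling of the connecting-map squares, the degree-$\ge 2$ vanishing, and the cokernel comparison in part~(2) merely fills in details the paper leaves to the reader.
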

The proof is immediate from the five-lemma applied to
$$
\begin{tikzcd}[column sep=tiny]
0 \arrow[r] \arrow[d]
& H^1(\cF_3)^{\ast} \arrow[r] \arrow[d]
& H^1(\cF_2)^{\ast} \arrow[r] \arrow[d]
& H^1(\cF_1)^{\ast} \arrow[r] \arrow[d]
& H^0(\cF_3)^{\ast} \arrow[r] \arrow[d]
& H^0(\cF_2)^{\ast} \arrow[r] \arrow[d]
& H^0(\cF_1)^{\ast} \arrow[r] \arrow[d]
&0 \\
0 \arrow[r]
& \Hom(\cF_3, \omega) \arrow[r]
& \Hom(\cF_2, \omega) \arrow[r]
& \Hom(\cF_1, \omega) \arrow[r]
& \Ext^1(\cF_3, \omega) \arrow[r]
& \Ext^1(\cF_2, \omega) \arrow[r]
& \Ext^1(\cF_1, \omega) \arrow[r]
&0
\end{tikzcd}
$$ 

\ignore{
\subsection{Proof of Theorem~\ref{th_duality_md}}

To prove Theorem~\ref{th_duality_md}, we first prove the following
intermediate step.

\begin{lemma}\label{le_surjective}
If for a field, $k$, an integer $r>0$, and $\ourv d\in\integers^2$ we
have that
$$
H^1(\cM_{k,r,\ourv d}) \to \Ext^0(\cM_{k,r,\ourv d},\omega_r)^*
$$
is surjective.
\end{lemma}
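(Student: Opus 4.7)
The plan is to prove surjectivity by descending induction on $d_1$ with $d_2$ held fixed, using the short exact sequence
$$0 \to \cM_{k,r,\ourv d} \to \cM_{k,r,\ourv d+(1,0)} \to \cS_1 \to 0$$
from \eqref{eq_basic_exact_O_mods} (with $\cS_1=\Sky(B_1,\tilde k)$), together with the Strong-Duality of $\cS_1$ established in Theorem~\ref{th_strong_duality}.

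For the base case I would take $d_1+d_2\ge 2r-3$. By Theorem~\ref{th_weak_rr}(2), $H^1(\cM_{k,r,\ourv d})=0$; and by Theorem~\ref{th_hom_md}, $\Hom(\cM_{k,r,\ourv d},\omega_r)\isom \Gamma(\cM_{k,r,K_r-\ourv d})$, which vanishes since $(K_r-\ourv d)_1+(K_r-\ourv d)_2 = 2(r-2)-d_1-d_2\le -1$ puts us in case~(1) of Theorem~\ref{th_weak_rr}. Hence both source and target of the map are zero and surjectivity is automatic.

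For the inductive step I would assemble the commutative diagram whose rows come from the long exact $H^*$-sequence and (after dualizing) the long $\Ext^*(-,\omega_r)$-sequence, linked by the Yoneda pairing of Lemma~\ref{le_Yoneda_prod}:
$$\begin{tikzcd}[column sep=small]
H^0(\cS_1) \arrow[r] \arrow[d,"\isom"] & H^1(\cM_{\ourv d}) \arrow[r] \arrow[d] & H^1(\cM_{\ourv d+(1,0)}) \arrow[r] \arrow[d] & 0 \\
\Ext^1(\cS_1,\omega_r)^* \arrow[r] & \Hom(\cM_{\ourv d},\omega_r)^* \arrow[r] & \Hom(\cM_{\ourv d+(1,0)},\omega_r)^* \arrow[r] & 0
\end{tikzcd}$$
The top row is exact with a trailing $0$ because $H^1(\cS_1)=0$ (Lemma~\ref{le_sky_Betti}). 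The bottom row is exact with a trailing $0$ because $\Hom(\cS_1,\omega_r)=0$ was computed during the proof of Theorem~\ref{th_strong_duality}; dualizing preserves exactness over a field. The leftmost vertical arrow is an isomorphism by Strong-Duality of $\cS_1$. Then Lemma~\ref{le_pseudo_five} applies: if the rightmost vertical arrow (the inductive hypothesis at $\ourv d+(1,0)$) is surjective, so is the middle one, which is the claim at $\ourv d$.

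Since any $\ourv d\in\integers^2$ is reached from $\ourv d+(n,0)$ for sufficiently large $n$, descending induction gives the lemma. The main obstacle will be verifying commutativity of the diagram — i.e., that the connecting homomorphisms on the two sides intertwine correctly under the Yoneda pairing — which should follow from part~(3) of Lemma~\ref{le_Yoneda_prod} applied with $\cF_1=\cO_{k,r}$, middle variable running through the given short exact sequence, and $\cF_3=\omega_r$; otherwise every ingredient is already in hand.
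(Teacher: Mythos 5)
Your proposal is correct and follows essentially the same route as the paper's proof: the same short exact sequence \eqref{eq_basic_exact_O_mods}, the same commutative diagram linked by the Yoneda pairing, Lemma~\ref{le_pseudo_five} for the inductive step, and a base case where the $\Ext^0$ side vanishes by Theorems~\ref{th_hom_md} and \ref{th_weak_rr} (the paper only needs the target to vanish, while you also note the source does). The only difference is cosmetic: you spell out the commutativity via Lemma~\ref{le_Yoneda_prod}(3) and the vanishing $\Hom(\cS_1,\omega)=0$ needed for the trailing zero in the bottom row, which the paper leaves implicit.
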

\begin{proof}
For any $\ourv d$, we have that the map 
$$
H^1(\cM_{k,r,\ourv d+(n,0)}) \to \Ext^0(\cM_{k,r,\ourv d+(n,0)},\omega_r)^*
$$
is surjective for integer $n$ sufficiently large, since
$$
\Ext^0(\cM_{k,r,\ourv d+(n,0)},\omega_r)\isom
\Gamma(\cM_{k,r,K_r-\ourv d-(n,0)})
$$
is zero for for $n>K_r-d_1-d_2$, by Theorem~\ref{th_weak_rr}.
Then
the short exact sequence \eqref{eq_basic_exact}, yields a diagram
$$
\begin{tikzcd}[column sep=tiny]
H^0(\Sky(B_1,\tilde k))  \arrow[r] \arrow[d] 
& H^1(\cM_{k,r,\ourv d + (m-1,0)}) \arrow[r] \arrow[d] 
& H^1(\cM_{k,r,\ourv d + (m,0)}) \arrow[r] \arrow[d] 
& 0 \\
\Ext^1(\Sky(B_1,\tilde k)) ,\omega)^* \arrow[r]
& \Ext^0(\cM_{k,r,\ourv d + (m-1,0)} ,\omega)^* \arrow[r]
& \Ext^0(\cM_{k,r,\ourv d + (m,0)} ,\omega)^* \arrow[r]
& 0 \\
\end{tikzcd}
$$
for any integer $m$.  It follows by induction and repeated application
of Lemma~\ref{le_pseudo_five}
that
\begin{equation}\label{eq_map_Mkrdn}
H^1(\cM_{k,r,\ourv d+(n',0)}) \to \Ext^0(\cM_{k,r,\ourv d+(n',0)},\omega_r)^*
\end{equation} 
is a surjection for all $n'\le n$; the case $n'=0$ proves the
theorem.
\end{proof}
We remark that the dual of Lemma~\ref{le_pseudo_five} and the sequence
$$
\begin{tikzcd}[column sep=tiny]
0 \arrow[r] \arrow[d]
& H^0(\cM_{k,r,\ourv d - (1,0)}) \arrow[r] \arrow[d]
& H^0(\cM_{k,r,\ourv d }) \arrow[r] \arrow[d]
& H^0(\Sky(B_1,\tilde k))  \arrow[d]
\\
0 \arrow[r]
& \Ext^1(\cM_{k,r,\ourv d - (1,0)} ,\omega)^* \arrow[r]
& \Ext^1(\cM_{k,r,\ourv d } ,\omega)^* \arrow[r]
& \Ext^1(\Sky(B_1,\tilde k) ,\omega)^* 
\end{tikzcd}
$$
shows that the map
$$
H^0(\cM_{k,r,\ourv d}) \to \Ext^1(\cM_{k,r,\ourv d},\omega_r)^*
$$
is injective for all $\ourv d$.


\begin{proof}[Proof of Theorem~\ref{th_duality_md}]
Let us first prove the lemma for any $\ourv d$ with $d_1+d_2$ sufficiently
small; 
in view of Lemma~\ref{le_surjective} it suffices to prove that
$$
H^1(\cM_{k,r,\ourv d}) \isom \Gamma(\cM_{k,r,K_r-\ourv d})
$$
have the same dimension (the above isomorphism uses
Theorem~\ref{th_hom_md}).  But using Theorem~\ref{th_weak_rr},
both of these spaces are of dimension
$r-1-d_1-d_2$ for $d_1+d+2$ sufficiently small.
Hence $\cM_{k,r,\ourv d}\in{\rm Duality}_{\cO_{k,r},1}(\omega_{k,r})$
for $d_1+d_2$ sufficiently small.

Hence for any $\ourv d$, we have 
$\cM_{k,r,\ourv d-(n,0)}\in{\rm Duality}_{\cO_{k,r},1}(\omega_{k,r})$
for $n$ sufficiently large.
It follows from the repeated application of the
Method of Grothendieck to the exact sequence
\eqref{eq_basic_exact_O_mods} that
$\cM_{k,r,\ourv d}\in{\rm Duality}_{\cO_{k,r},1}(\omega_{k,r})$.
\end{proof}
}

\section{Foundations, Part 5: Sheaf Hom and Local Considerations}
\label{se_simpleV}

It conceivable that there is a more general strong duality theorem
that is even easier to prove, namely a local theorem whose
global version Theorem~\ref{th_strong_duality}.
In this section we describe some ``local considerations.''
In particular our proof Theorem~\ref{th_hom_md} really proves that there 
is an injection {\red (not an isomorphism)} 
\begin{equation}\label{eq_SHom_M_result_again}
\SHom_{\cO_r}(\cM_{r,\ourv d},\cM_{r,\ourv d'})\rightarrow \cM_{r,\ourv d'-\ourv d},
\end{equation} 
which immediately implies that
\begin{equation}\label{eq_second_main_formula}
dim(\Hom_{\cO_r}(\cM_{r,\ourv d},\cM_{r,\ourv d'}))\leq
b_0(\cM_{r,\ourv d'-\ourv d})
\end{equation} 
by taking global sections.
We will also define the {\em tensor product} of sheaves
and derive some convenient formulas such as
$$
\SHom\bigl( \cL_{k,r,\ourv d}\otimes \cF,\cG) \isom
\SHom\bigl( \cF,\cL_{k,r,-\ourv d}\otimes\cG) ,\quad
\cL_{k,r,\ourv d}\otimes \cL_{k,r,\ourv d'}\isom
\cL_{k,r,\ourv d+\ourv d'},
$$
$$
\cL_{k,r,\ourv d}\otimes \cM_{k,r,\ourv d'}\isom
\cM_{k,r,\ourv d+\ourv d'}.
$$
We will also make some remarks about the curious nature of 
$$
\cM_{k,r,\ourv d}\otimes \cM_{k,r,\ourv d'} \ .
$$

\subsection{Localization}

Topos theory strongly suggests how we should define ``localization,''
and thereby a number of concepts such as {\em sheaf Hom}.  Here is 
the upshot in our case.

\begin{definition}
Let $G=(\cA,\cB,E)$ be a bipartite graph.  If $P\in\cA\amalg \cB$, we define
the {\em restriction of $G$ to $P$} as follows:
\begin{enumerate}
\item
if $P=A\in\cA$, we define
$G/A$ to be the bipartite graph  $(\{A\},\emptyset,\emptyset)$;
\item
if $P=B\in\cB$, we define $G/B$ to be the bipartite graph
$(\cA_B,\{B\},\cA_B\times\{B\})$ where $\cA_B$ consists of those elements
of $\cA$ incident upon $B$.
\end{enumerate}
Furthermore, if
$\cC$ is the bipartite category associasted to $G=(\cA,\cB,E)$,
for any $P\in\Ob(\cC)=\cA\amalg\cB$, we define the
{\em restriction of\, $\cC$ to $P$}, denoted $\cC/P$, to be the subcategory
of $\cC$ associated to $G/P$.
\end{definition}

More generally, if $\cC$ is any category
endowed with its coarsest topology, then
[SGA4], Expos\'e IV, gives a recipe where the {\em points} of the
topos are naturally identified with $\Ob(\cC)$, each of which is
contained in a minimal {\em open subtopos}, which is precisely the
``slice category'' $\cC/P$, often called the category
``of objects over $P$'' or ``of morphisms to $\cC$.''

\begin{definition}
If $\cF$ is a sheaf of $k$-vector spaces on a bipartite category, $\cC$,
and $P\in\Ob(\cC)$, then by the {\em localization 
of $\cF$ to $P$},
denoted $\cF|_P$, we mean the
restriction of $\cC$ to $\cC/P$.  Similary if $u\from\cF\to\cG$ is
a morphism, the {\em localization of $u$ to $P$} is the evident
restriction map $u|_P$ from $\cF|_P$ to $\cG_P$.
\end{definition}
This definition is valid for any category
$\cC$ (viewed as a topos with the coarsest topology), although 
$\cC/P$ is not generally a subcategory of $\cC$, and the
natural ``localization map'' $\cC/P\to\cC$ is not
an inclusion; the exception is when $\cC$ is topological in the
sense of Definition~\ref{de_top_semitop}, which includes the case
of bipartite categories.

In the following subsections we will see the importance of the
notion of localization.

\subsection{Sheaf Hom}

\begin{definition}
Let $\cC$ be the simple category associated to a bipartite graph, $G$,
$\cO$ a sheaf of rings, and
$\cF,\cG$ sheaves of $\cO$-modules.
We define a sheaf $\cH=\SHom(\cF,\cG)$ as the sheaf of $\cO$-modules
whose value at $P$ is
$$
\cH(P) = \Hom_{\cO|_P}(\cF|_P,\cG|_P) ,
$$
and whose restirction maps $\cH(B)\to\cH(A)$, for a morphism $A\to B$,
is given by the restriction since $G_A$ is a subgraph of $G_B$.
\end{definition}

In the above definition, if $(A,B)$ is an edge in $G$, then
a morphism $\cF(B)\to\cG(B)$ does not necessarily extend to a morphism
$\cF(A)\to\cG(A)$, and if it does then the extension is not necessarily
unique.  
Hence we cannot define a good notion of sheaf Hom by looking 
``value-by-value.''  Furthermore
there is a canonical morphism
$$
\Hom_{\cO|_P}(\cF|_P,\cG|_P) \to \Hom_{\cO(P)}\bigl( \cF(P),\cG(P) \bigr)
$$
(see [EGA1], Section~0.5.2.6 or \cite{hartshorne}, Proposition~III.6.8),
but this morphism is not an isomorphism when $\cF,\cG$ are of the
form $\cM_{k,r,\ourv d},\cM_{k,r,\ourv d'}$, since in this case,
the left-hand-side 
$\cO(P)$ module is of rank $r$ for $P=B_3$, whereas the right-hand-side
is of rank $r^2$.

\subsection{Proof of \eqref{eq_SHom_M_result_again}}

Let us briefly indicate a proof of \eqref{eq_SHom_M_result_again}.

{\red Again, since we are only claiming the existence of a morphism,
this result is far simpler.
Yet, we expect that future work may use such local considerations.
Again, the reason that we get a morphism and not an isomorphism
is the problem with the $B_3$ neighbourhood; all other neighbourhoods
are fine, but the $B_3$ neighbourhood is essential in connecting
the points $B_1,A_1$ to $B_2,A_2$.
}

The proof of Theorem~\ref{th_hom_md} really gives 
\begin{enumerate}
\item a morphism
$$
\SHom_{\cO_r}(\cM_{r,\ourv d},\cM_{r,\ourv d'})(B_3) 
\to \cM_{r,\ourv d'-\ourv d}(B_3)
$$
\item morphisms for $i=1,2$
$$
\SHom_{\cO_r}(\cM_{r,\ourv d},\cM_{r,\ourv d'})(B_i)
\to \cM_{r,\ourv d'-\ourv d}(B_i)
$$
which are both isomorphic to $\Hom_{R_i}(R_i,R_i)=R_i$ with
$R_i=k[y_i]$,
\item
such that for $i=1,2$, these
isomorphisms at $B_3$ and $B_i$ restrict to the same
isomorphism
$$
\SHom_{\cO_r}(\cM_{r,\ourv d},\cM_{r,\ourv d'})(A_i)
\to \cM_{r,\ourv d'-\ourv d}(A_i)
$$
which are both isomorphic to $\Hom_{S_i}(S_i,S_i)\isom S_i$ with
$S_i=k[x_i,1/x_i]$.
\end{enumerate}
Hence Theorem~\ref{th_hom_md} really gives
morphisms for \eqref{eq_SHom_M_result_again} restricted to open
neighbourhoods of $B_1,B_2,B_3$ which (1) are isomorphisms in 
the neighbourhoods 
{\red of $B_1$ and of $B_2$, but only
morphisms
in that of $B_3$,} and (2) agree on their overlap
when restricted to $A_1,A_2$
{\red (this overlap is small because of the problems at $B_3$)}.
Hence we get a global morphism~\eqref{eq_SHom_M_result_again}.

\subsection{Tensor Product and Line Bundles}

\begin{definition}
Let $\cF,\cG$ be sheaves of $\cO$-modules for some sheaf of rings
$\cO$ on a bipartite category $\cC$.  We define the {\em tensor
product} of $\cF,\cG$, denoted $\cF\otimes_\cO \cG$, to be the
sheaf whose values are
$$
\bigl( \cF\otimes_\cO \cG \bigr)(P) = 
\cF(P)\otimes_{\cO(P)} \cG(P),
$$
and whose restriction maps are obtained as the tensor product
of the restriction maps of $\cF$ and $\cG$.
\end{definition}

Analogous to \eqref{eq_SHom_M_result_again} we easily see that
$$
\cL_{k,r,\ourv d}\otimes \cL_{k,r,\ourv d'}\isom
\cL_{k,r,\ourv d+\ourv d'}
$$
and
\begin{equation}\label{eq_otimes_L_M}
\cL_{k,r,\ourv d}\otimes \cM_{k,r,\ourv d'}\isom
\cM_{k,r,\ourv d+\ourv d'} \ .
\end{equation} 

The fact that
$$
\cL_{k,r,\ourv d}\otimes \cL_{k,r,-\ourv d}\isom
\cL_{k,r,\ourv 0}=\cO_{k,r}
$$
justifies calling the $\cL_{k,r,\ourv d}$ {\em invertible sheaves}.
Let us summarize this idea.

By a {\em vector bundle} we mean a sheaf, $\cF$, such that for some 
integer $n\ge 1$, $\cF$ is {\em locally
isomorphic to $\cO^n$}, i.e., for any $P\in\Ob(\cC)$ we have
$$
\cF|_P \isom \cO^n|_P .
$$
The case $n=1$ is called a {\em line bundle}.

\begin{example}
Consider any sheaf $\cF$ on $\CTwoV$ such that
$\cF(P)=\cO(P)$ for all $P\in\Ob(\CTwoV)$ and that 
$\cF(A_i,B_3)=\cO(A_i,B_3)$ for $i=1,2$.
Then for $\cF$ to be a line bundle it is necessary that
$\cF(A_1,B_1)1$ be multiplicative unit in $\cF(A_1)=k[y_1,1/y_1]$,
since if not we cannot have $\cF|_{B_1}\isom\cO|_{B_1}$;
we easily check that if $\cF(A_i,B_i)$ is a multiplicative
unit in $\cF(A_i)$ for $i=1,2$, then $\cF$ is indeed a line
bundle.
Hence for $i=1,2$ we have $\cF(A_i,B_i)=x_i^{d_i}c_i$ for some
nonzero $c_i\in k$; our sheaves $\cL_{k,r,\ourv d}$ are the
special cases where $c_1=c_2=1$, and for general nonzer $c_1,c_2$
the sheaf $\cF$ is isomorphic to $\cL_{k,r,\ourv d}$.
\end{example}

There are a number of standard facts about line bundles and vector
bundles: for example, any line bundle $\cL$ has an ``inverse''
line bundle $\cL^{-1}$ for which $\cL\otimes\cL^{-1}\isom\cO$, and
$$
\SHom(\cL\otimes\cF,\cG) \isom \SHom(\cF,\cL^{-1}\otimes\cG)
$$
for any $\cO$-modules $\cF,\cG$ (where we have dropped the subscripts
of $\cO$ from $\SHom$ and $\otimes$).
It follows immediately that 
$$
\SHom\bigl( \cM_{k,r,\ourv d} ,\cM_{k,r,\ourv d'})
\isom
\SHom\bigl( \cM_{k,r,\ourv 0} ,\cM_{k,r,\ourv d' - \ourv d})
$$

We easily verify a number of other standard formulas such as the
existence of an isomorphism
\begin{equation}\label{eq_shom_tensor}
u\from \SHom_\cO\bigl( \cF,\SHom_\cO(\cG,\cH) \bigr) \to
\SHom_\cO(\cF\otimes_\cO\cG,\cH)
\end{equation}
that is natural in $\cF,\cG,\cH$.
To describe this isomorphism we need to specify $u$ at each 
$P\in\Ob(\cC)$, which amounts to a morphism
$$
u(P)\from 
\Hom_{\cO|_P}\Bigl( \cF|_P, \Hom_{\cO|_P}\bigl(\cG|_P,\cH|_P\bigr)\Bigr)
\to
\Hom_{\cO|_P}\bigl( (\cF\otimes_\cO\cG)|_P,\cH|_P \bigr) 
$$
which is straightforwards to describe
(see \cite{hartshorne} Exericse~II.5.1(c), or [SGA4] Section~V.12.8
for the more general case).
By Yoneda's lemma, \eqref{eq_shom_tensor}
uniquely determines $\cF\otimes\cG$.

\subsection{Remarks on $\cM_{k,r,\ourv 0}\otimes\cM_{k,r,\ourv d}$}

It is curious to note that
$$
\bigl( \cM_{k,r,\ourv 0}\otimes\cM_{k,r,\ourv d} \bigr)(B_3)
$$
is of rank $r^2$, while the inclusion
$$
\cO_{k,r}\to \cM_{k,r,\ourv 0}
$$
sending $\cO_{k,r}(B_3)$ to the first component of $\cM_{k,r,\ourv 0}(B_3)$
gives an inclusion
$$
\cO_{k,r}\otimes\cM_{k,r,\ourv d} \to 
\cM_{k,r,\ourv 0}\otimes\cM_{k,r,\ourv d}
$$
where the left-hand-side is just $\cM_{k,r,\ourv d}$. 
Note that this map takes $e_1\otimes e_j$ to $e_j$ and
$e_{j'}\otimes e_j$ to zero for $j'\ge 2$; one could do the reverse map;
more generally, for each $j=1,\ldots,r$ and each $j',j''$ with
$j'+j''=r$, one can map $e_{j'}\otimes e_{j''}$ to some multiple of $e_j$
and get a morphism
$$
\cM_{k,r,\ourv 0}\otimes\cM_{k,r,\ourv d}\to\cM_{k,r,\ourv d}.
$$
We check that the cokernel of any such map (that is an inclusion)
is supported at $B_3$ and
seems a bit strange: it contains the classes of elements
\begin{equation}\label{eq_zero_stuff}
(e_{i_1}\otimes e_{i_2})-(e_{i_3}\otimes e_{i_4}) \quad{\rm s.t.}
\quad i_1+i_2=i_3+i_4 
\end{equation} 
that restrict to zero; this gives a $k[v,1/v]$-module of rank
$\binom{r}{2}$ of elements of
$$
\bigl( 
\cM_{k,r,\ourv 0}\otimes\cM_{k,r,\ourv d}
\bigr)(B_3)
$$
that map to zero along all restriction maps.
However when $i_1+i_2=i_3+i_4 - r$, the class
of
$$
(e_{i_1}\otimes e_{i_2})v -(e_{i_3}\otimes e_{i_4}) 
$$
is taken to $0$ in the $(A_1,B_3)$ restriction but not in the
$(A_2,B_3)$ restriction, and vice versa for the class of
$$
(e_{i_1}\otimes e_{i_2})v^{-1} -(e_{i_3}\otimes e_{i_4})  \ ;
$$
similarly, this generates a module of rank $\binom{r}{2}$ of 
$$
\bigl( 
\cM_{k,r,\ourv 0}\otimes\cM_{k,r,\ourv d}
\bigr)(B_3)
$$
with a curious ``twisting'' property.  It follows that
$\cM_{k,r,\ourv 0}\otimes\cM_{k,r,\ourv d}$ is the direct sum of
an module supported at $B_3$ generated by the elements of
\eqref{eq_zero_stuff} and the rest, which admits an inclusion
from $\cM_{k,r,\ourv d}$, but has some curious ``twisting'' elements.

Of course, the commutativity (up to isomorphism) of $\otimes$ 
and \eqref{eq_otimes_L_M}
shows that
$$
\cM_{\ourv d}\otimes\cM_{\ourv d'} \isom
\cM_{\ourv 0}\otimes\cM_{\ourv d + \ourv d'} \ ,
$$
and so similar remarks hold for this more general left-hand-side
tensor product.

\section{Remarks for Future Research}
\label{se_future}

{\red 
A main goal of future research is to get a duality theorem
for $\cO$-modules with $\cO=\cO_{k,r}$ that holds for
the $\cM_{k,r,\ourv d}$.
We hope to address this in future work.
}

In this section we make some remarks regarding future research.  We 
will discuss ``local methods'' and some issues involving them, both
for $\CTwoV$ and for more general categories including those of
interest in previous works on sheaves on graphs.

We conclude with some brief remarks regarding other directions.

\subsection{Local Duality Theorems}
\label{su_sky_duality}

In this subsection we explore possible 
generalizations stronger duality theories.

It would seem simpler and more natural to prove that 
for 
$$
\Sky\bigl(B_1,\cO_{k,r}(B_1) \bigr)
=
\Sky\bigl( B_1, k[y_1] \bigr)
$$
satisfies strong duality, and then infer this for
$\Sky\bigl( B_1,\widetilde k)$
from the method of Grothendieck and the exact sequence
$$
0 \to 
\Sky\bigl( B_1, k[y_1] \bigr)
\to 
\Sky\bigl( B_1, k[y_1] \bigr)
\to
\Sky\bigl( B_1, \widetilde k \bigr)
\to 0
$$
which is immediate from the exact sequence of $k[y_1]$-algebras
$$
0 \to k[y_1] \to k[y_1]\to \widetilde k \to 0 \ .
$$
The problem is that $\Sky\bigl( B_1, k[y_1] \bigr)$ does not
have finite zeroth Betti number, and two infinite dimensional
vector spaces cannot be perfectly paired with each other
since their dual spaces are too large.
Here we make a number of possible ways in which
one can study strong duality of $\Sky\bigl( B_1, k[y_1] \bigr)$.

First, we have a canonical isomorphism
\begin{equation}\label{eq_H_0_Sky}
H^0\Bigl( \Sky\bigl( B_1, k[y_1] \bigr) \Bigr) \isom k[y_1]
\end{equation} 
and---in view of the projective resolution of $\omega_{k,r}$---a
canonical isomorphism
$$
\Ext^1\Big( \Sky\bigl( B_1, k[y_1] \bigr), \omega_{k,r} \Bigr)
\isom
k[x_1,1/x_1]/k[y_1] = k[x_1,1/x_1]/k[1/x_1] \ ;
$$
with these identifications we could compute the pairing of these spaces
and presumably infer an explicit formula for the pairing on 
$\Sky\bigl( B_1, \widetilde k)$.

Second, one could also write
\begin{equation}\label{eq_Ext_1_Sky}
\Ext^1\Big( \Sky\bigl( B_1, k[y_1] \bigr), \omega_{k,r} \Bigr)
\isom
k[y_1,1/y_1]/k[y_1],
\end{equation} 
giving this group the structure of a finitely generated $k[y_1]$-algebra;
the group \eqref{eq_H_0_Sky} also has such a structure.  Hence, although
these two groups are infinite dimensional $k$-vector spaces, they
are both finitely generated $k[y_1]$-algebras.

Third, although the dual $k$-vector space of \eqref{eq_H_0_Sky} is 
too large to be isomorphic to \eqref{eq_Ext_1_Sky},
there is a natural subspace, $S$, of the dual space---which one could
call the {\em tame dual} (perhaps {\em finitely supported dual}) of
\eqref{eq_H_0_Sky}---to which
\eqref{eq_Ext_1_Sky} appears to be isomorphic: namely, define the
{\em tame dual} of $k[y_1]$ to be the set of linear functionals
$\ell\from k[y_1]\to k$ such that
$$
\ell(a_0 + a_1 y_1 + a_2 y_1^2 +\cdots+a_n y_1^n)
$$
depends only on finitely many of the $a_i$'s (independent of $n$, of course).

More generally, if $W$ is any $k$-vector space with a countable basis
$w_1,w_2,\ldots$, we define the {\em tame dual} or {\em finitely supported
dual} to be the linear functionals that depend on only finitely many
of the coefficients of $w_i$ in the unique representation of any vector.
Although this notion depends on the basis we choose for $W$, it is pretty
clear that the pairing
$$
H^0\Bigl( \Sky\bigl( B_1, k[y_1] \bigr) \Bigr) \times
\Ext^1\Big( \Sky\bigl( B_1, k[y_1] \bigr), \omega_{k,r} \Bigr) \to k
$$
should be a tame pairing, i.e., the pairing of $y_1^n$ in the first
factor should depend on only finite many of the $1/y_1^m$ in the second.
One possible problem with the tame dual is that if the graph
Riemann-Roch or some other application requires us to model spheres
with more than just $0$ and $\infty$ missing, i.e., three or more points
missing, then it is not clear we can choose local bases for Hom and Ext
groups that are globally compatible.

\subsection{Alternative Views of $\Sky(B_1,k[y_1]/(y_1)$ Strong Duality}

There are two other methods that one could use to understand strong
duality of $\cS_1=\Sky(B_1,k[y_1]/(y_1))$ (and the same with $2$ replacing
$1$ in the subscripts).
First, one could compute the effect of $\Hom_\cO(\cO,\cS_1)=k[y_1]/(y_1)$
on $\Ext^1(\cS_1,\omega_{k,r})$ using the fuctorial nature of
$\Ext$.  Second, one could do this in terms of Yoneda ext groups.
These two are essentially the same computation; however, either of these
two methods may illuminate and provide an easier proof of the 
perfect pairing of $\Hom_\cO(\cO,\cS_1)$ with $\Ext^1(\cS_1,\omega_{k,r})$.

\subsection{Localization for Graphs and Graphs of Groups}

The categories used in \cite{friedman_memoirs_hnc} were based on 
oriented graphs, $G$, that could have self-loops.  In the event that 
such a $G$
has no self-loops, then the resulting category is 
a bipartite category with left objects being $V_G$, the vertices of the
oriented graph, and the right objects were the edges of the oriented
graph, $E_G$.
However, when such a $G$ has self-loops (which would be
{\em whole-loops} in the sense of \cite{friedman_geometric_aspects}), 
one gets two morphisms
from the vertex to its self-loop; this therefore a
semi-topological category (Definition~\ref{de_top_semitop}), but not
a topological category, and the resulting topos is not a topological
space.

One aspect of localization in general finite categories, $\cC$, is that
when we work with the slice category $\cC/P$ as the ``neighbourhood of
$P$,'' we seem to get very reasonable definitions.
One superficial aspect of this is that if we generalize our bipartite
categories to include the above categories of \cite{friedman_memoirs_hnc},
then if $e$ is a self-loop about a vertex $v$ as above, 
then $\cC/e$ is the category with three objects, whose objects are
the two morphisms to $e$ from $v$ and the identity morphism $\id_e$;
hence $\cC/e$ is two objects mapping to one object, just as $\cC/e$
is when $e$ is not a self-loop (i.e., when $e$ has distinct endpoints).

We can see that this remark regarding $\cC/e$ also holds when a graph
has {\em half-loops} in the sense of \cite{friedman_geometric_aspects}
(half-loops play an important role in defining regular random graphs of
odd degree and other aspects of covering theory).
A {\em graph} is defined as a directed graph with an orientation reversing
involution; therefore if $e$ is a half-loop, i.e., an edge that is the
fixed point of the involution, then in the corresponding category
one would expect that its incident vertex, $v$, has two morphisms to $e$,
but that $\Hom(e,e)$ is a cyclic group of order two.
In this case the slice category $\cC/e$ has four objects, including
the two elements of $\Hom(e,e)$ which are isomorphic
(as objects of $\cC/e$);
hence the resulting category is still the above three-object category.

Similarly consider a {\em graphs of groups}, $(T,G)$,
as in \cite{serre_trees} Section~4.4, Definition 8, with the notation there.
It is natural associate to $(T,G)$ a category, $\cC$, whose objects are
$V\amalg E'$, where $V$ is the vertex set of $T$, $E$ the edge set,
and $E'$ is the collection of unordered pairs $\{e,\overline e\}$ with
$e\in E$; the morphisms of $\cC$ are as follows:
(1) $\Hom(v,v)=G_v$ for $v\in V$;
(2) $\Hom(e',e')=G_e=G_{\overline e}$ for $e'=\{e,\overline e\}\in E'$;
(3) we introduce a set of morphisms $t(e)\to \{e,\overline e\}$ for 
each $e\in E$
that consists of $G_v$.  The compositions with elements of 
$\Hom(t(e),\{e,\overline e\})$
are multiplication in $G_v$
either on the left with $G_v$ (for $\Hom(v,v)$) 
or on the right with $G_e$ acting via its
image in $G_v$ under $G_e\to G_{t(e)}$.\footnote{
  See also \cite{bridson}, 
  III.C.2.8 (page~538), which is presumably equivalent (we do not understand
  what $t(\alpha)$ would mean for $\alpha\in\cY$ if $\alpha\in V(\cY)$ in
  the second paragraph of this section).
}
In this case, for $e'=\{e,\overline e\}\in E'$ we have $\cC/e'$ 
consists of objects $G_{t(e)}\amalg G_{t(\overline e)}\amalg G_e$,
where any two objects in any of these three summands are isomorphic; we
therefore see that $\cC/e'$ is equivalent to the same three-object category
as $\cC/e$ for graphs above.
[Similarly, we see that $\cC/v$ for $v\in V$ is equivalent to the category
with one element and one morphism, just as it would be in a graph.]

From the above examples, the view of topos theory
([SGA4.IV]) that $\cC/P$ is the ``smallest neighbourhood
of $P$'' indicates that graphs, graphs with whole-loops and/or half-loops,
and graphs of groups all have the same ``local'' structure.
This seems quite promising when we try to combine the methods of this
article with those of \cite{friedman_memoirs_hnc}, and to generalize
these methods to graphs with half-loops and graphs of groups.

\subsection{Other Remarks}

For any integers $r',r''\ge 1$ and field $k$, there is a morphism 
$\cO_{k,r'}\to \cO_{k,r'r''}$ that maps the indeterminate $v\in\cO_{k,r'}$
to $v^{r''}$ in $\cO_{k,r'r''}$ and is otherwise the identity.
Hence there is a morphism of ringed spaces
(\cite{hartshorne}, page~72)
from $(\CTwoV,\cO_{k,r'r''})$ to $(\CTwoV,\cO_{k,r'})$.
It would be interesting to study such morphisms; in particular,
each $\cO_{k,r}$ can be considered as a (presumably $r$-to-$1$)
covering space of $\cO_{k,1}$, which is just the sphere.

Of course, ultimately we would like to generalize this theory to
include the Riemann-Roch Graph Theorem on an arbitrary number of
vertices, and, more generally, to study $\integers^n/L$ where $L$
is any lattice in $\integers^n$ (for the Riemann-Roch Graph Theorem,
$n$ is the number of vertices and $L$ is the image of the graph
Laplacian).

One could also study for a covering map $G\to G'$ of graphs, how
the Graph Riemann-Roch Theorem and how our type of algebraic models
behave.


{\tiny\red

}

\providecommand{\bysame}{\leavevmode\hbox to3em{\hrulefill}\thinspace}
\providecommand{\MR}{\relax\ifhmode\unskip\space\fi MR }
\providecommand{\MRhref}[2]{%
  \href{http://www.ams.org/mathscinet-getitem?mr=#1}{#2}
}
\providecommand{\href}[2]{#2}


\end{document}